\newtheorem{theorem}{Theorem}
\newtheorem{lemma}[theorem]{Lemma}
\newtheorem{proposition}[theorem]{Proposition}
\theoremstyle{definition}
\renewcommand{\labelenumi}{(\theenumi)}
\newcommand{\C}{\mathbb{C}}
\newcommand{\Z}{\mathbb{Z}}
\newcommand{\calC}{\mathcal{C}}
\newcommand{\calE}{\mathcal{E}}
\newcommand{\calP}{\mathcal{P}}
\newcommand{\calR}{\mathcal{R}}
\newcommand{\calZ}{\mathcal{Z}}
\newcommand{\geh}{\mathfrak{g}}
\newcommand{\heh}{\mathfrak{h}}
\newcommand{\enu}{\mathfrak{n}}
\newcommand{\ep}{\epsilon}
\newcommand{\La}{\Lambda}
\newcommand{\ot}{\otimes}
\newcommand{\op}{\oplus}
\newcommand{\pair}[1]{\langle{#1}\rangle}
\begin{document}

\title[]{Vertex algebraic construction of modules for twisted affine Lie algebras of type $A_{2l}^{(2)}$}
%[\today]

\author{RYO TAKENAKA}
\address{Department of Mathematics, Osaka Metropolitan University, Osaka 558-8585, Japan}
\email{r.takenaka0419@gmail.com}

\begin{abstract}
Let $\tilde{\geh}$ be the affine Lie algebra of type $A_{2l}^{(2)}$.
The integrable highest weight $\tilde{\geh}$-module $L(k\La_0)$ called the standard $\tilde{\geh}$-module is realized by a tensor product of the twisted module $V_L^T$ for the lattice vertex operator algebra $V_L$.
By using such vertex algebraic construction, we construct bases of the standard module, its principal subspace and the parafermionic space.
As a consequence, we obtain their character formulas and settle the conjecture for vacuum modules stated in \cite{HKOTT}.
\end{abstract}

\maketitle

\section{Introduction}

In 1970s-1980s, Lepowsky, Primc and Wilson studied the integrable highest weight modules called standard modules using vertex operators to obtain a number of combinatorial identities.
See e.g. \cite{LP, LW2}.
Lepowsky and Primc showed that the structure of the standard module is completely determined by the structure of its vacuum space which is isomorphic to a coset subspace of the standard module.
Let $\geh$ be the simple Lie algebra $\mathfrak{sl}_2$ and $\heh$ be its Cartan subalgebra.
The corresponding affine Lie algebras are given by
\[\tilde{\geh}=\geh\ot\C[t,t^{-1}]\op\C c\op\C d,\quad\tilde{\heh}=\heh\ot\C[t,t^{-1}]\op\C c\op\C d,\]
where $c$ is the canonical central element and $d$ is the degree operator.
They constructed combinatorial bases of the coset space $\tilde{\geh}\supset\tilde{\heh}$ of standard $\tilde{\geh}$-modules build upon Fourier coefficients of vertex operators.
This work was generalized by Georgiev to the higher rank case $\geh=\mathfrak{sl}_{n+1}$.
In \cite{G}, he first constructed bases for the principal subspaces introduced and studied by Feigin and Stoyanovsky \cite{FS,FS2}.
Then he did for certain subspaces of the standard modules called parafermionic spaces by generalizing the $\calZ$-algebra approach of Lepowsky, Primc and Wilson.
As a consequence, he obtained a fermionic character formula of the standard module $L(k_0\La_0+k_j\La_j)$.

Let $\tilde{\geh}$ be an affine Lie algebra of type $X_l^{(r)}$ with Weyl group $W$.
Denote $\Delta_+,{\rm mult}(\alpha)\text{ and }\ell(w)$ by the set of positive roots, the multiplicity of $\alpha\in\Delta_+$ and the length of $w\in W$ respectively.
It is well known that the character of the standard $\tilde{\geh}$-module $L(\La)$ is given by
\[{\rm ch}\ L(\La)=\frac{\sum_{w\in W}(-1)^{\ell(w)}e(w(\La+\rho)-\rho)}{\prod_{\alpha\in\Delta_+}(1-e(-\alpha))^{{\rm mult}(\alpha)}},\]
where $e(\cdot)$ is the formal exponential and $\rho$ is an element of $\heh^*$ such that
\[\pair{\rho,\alpha_i^\vee}=1\quad(i=1,\ldots,l)\]
for simple coroots $\alpha_i^\vee(i=1,\ldots,l)$.
See \cite{K}.
In contrast to this Weyl-Kac character formula, there exists a so called fermionic character formula which is written in terms of $q$ series without negative coefficients, where $q=e(-\delta)$ and $\delta$ is a generator of imaginary roots.
In the particular case of the vacuum module, conjectures of fermionic character formulas are stated in \cite{HKOTT,KNS}.

Recently, based on the study of the construction of principal subspaces for untwisted affine Lie algebras (see e.g. \cite{B,BK,G}), Butorac, Ko\v{z}i\'{c} and Primc obtained combinatorial bases of standard modules for all untwisted affine Lie algebras in \cite{BKP}.
Furthermore, by considering parafermionic spaces, they settled the Kuniba-Nakanishi-Suzuki conjecture \cite{KNS}.
On the other hand, Okado and the author constructed bases of standard modules for twisted affine Lie algebras except for type $A_{2l}^{(2)}$ by using the results for principal subspaces obtained by Butorac and Sadowski \cite{BS}.
They were thereby able to show partly fermionic character formulas for twisted cases conjectured in \cite{HKOTT}.

The aim of this paper is to calculate the fermionic character formula of the standard moduel for the twisted affine Lie algebra of type $A_{2l}^{(2)}$, that is, to complete the proof of \cite[conjecture 5.3]{HKOTT} for all twisted affine Lie algebras.
In order to do that, we first need to obtain the quasi-particle basis of the principal subspace of the standard module by generalizing the seminal works of Calinescu, Lepowsky, Milas and Penn \cite{CLM,CMP} to higher rank and level cases.
In the same way as \cite{OT}, we are able to construct the combinatorial basis of the standard module.
In addition to these, we construct the parafermionic basis of the parafermionic space.
Finally, we calculate fermionic character formulas for the principal subspace and the parafermionic space as well as the standard module.

\section{Preliminaries}

\subsection{Lattice vertex operator algebras and twisted modules}
Let $\geh$ be a complex simple Lie algebra of type $A_{2l}$, and $\alpha_i\ (i=1,2,\ldots,2l)$ be its simple roots.
We recall the vertex algebraic construction associated to the root lattice of $\geh$ given by
\[L=\Z\alpha_1\op\cdots\op\Z\alpha_{2l}.\]
Its Dynkin diagram and the Dynkin automorphism are given in Table \ref{Dynkin auto}.
\begin{table}[h]
\begin{align*}
\xymatrix{*{\overset{1}{\bigcirc}}\ar@{<->}[d]\ar@<-0.8ex>@{-}[r]&*{\overset{2}{\bigcirc}}\ar@{<->}[d]\ar@<-0.8ex>@{--}[r]&*{\overset{l-1}{\bigcirc}}\ar@{<->}[d]\ar@<-0.8ex>@{-}[r]&*{\overset{l}{\bigcirc}}\ar@{-}[d]\ar@/^18pt/[d]\\
*{\underset{2l}{\bigcirc}}\ar@{-}@<0.8ex>[r]&*{\underset{2l-1}{\bigcirc}}\ar@<0.8ex>@{--}[r]&*{\underset{l+2}{\bigcirc}}\ar@<0.8ex>@{-}[r]&*{\underset{l+1}{\bigcirc}}\ar@/_18pt/[u]}
\end{align*}
\caption{Dynkin diagram of $\geh$ and the automorphism}\label{Dynkin auto}
\end{table}

\noindent Thus the automorphism $\nu$ of $L$ is determined by
\[\nu(\alpha_i)=\alpha_{2l-i+1}.\]
Let $\pair{\cdot,\cdot}$ be a nondegenerate invariant symmetric bilinear form on $\geh$.
Using this form, we identify the Cartan subalgebra $\heh$ of $\geh$ with its dual $\heh^*$, so that under this identification we have $L\subset\heh$.
We fix the bilinear form $\pair{\cdot,\cdot}$ so that we have $\pair{\alpha,\alpha}=2$ if $\alpha$ is a root.
Note that
\[\pair{\nu^\frac{r}{2}\alpha,\alpha}\in2\Z\quad\text{for all }\alpha\in L,\]
under the assumptions in Section 2 of \cite{L}.
Therefore we choose $r$ to be 4 rather than 2.
In fact, the period of $\nu$ is allowed to be larger than the order of $\nu$ in our setting (see the remark in Section 2 of \cite{L}).
Let $\zeta$ be the imaginary unit.

Following \cite{CLM, CMP, L}, we define the functions $C_0$, $C:L\times L\rightarrow\C^\times$ by
\[C_0(\alpha,\beta)=(-1)^{\pair{\alpha,\beta}},\quad C(\alpha,\beta)=\prod_{j=0}^3(-\zeta^j)^{\pair{\nu^j\alpha,\beta}}.\]
These functions are bilinear into the abelian group $\C^\times$ and $\nu$-invariant.
Since $C_0$ and $C$ satisfy
\[C_0(\alpha,\alpha)=C(\alpha,\alpha)=1\]
for all $\alpha\in L$, they determine uniquely two central extensions of $L$ by $\pair{\zeta}$ with commutator maps $C_0$ or $C$ denoted by $\hat{L}$ or $\hat{L}_\nu$,
\[1\longrightarrow\pair{\zeta}\longrightarrow\hat{L}~(\text{ or }\hat{L}_\nu)~\bar{\longrightarrow} L\longrightarrow1\]
where $\bar{\phantom{a}}$ stands for the projection to $L$.
Namely, we have $aba^{-1}b^{-1}=C_0(\overline{a},\overline{b})$ (resp. $C(\overline{a},\overline{b})$) for $a,b\in\hat{L}$ (resp. $\hat{L}_\nu$).
Let each commutator map correspond to 2-cocycle $\ep_{C_0}$ and $\ep_C$.
That is $\ep_{C_0}$ satisfies
\[\ep_{C_0}(\alpha,\beta)\ep_{C_0}(\alpha+\beta,\gamma)=\ep_{C_0}(\beta,\gamma)\ep_{C_0}(\alpha,\beta+\gamma),\quad \frac{\ep_{C_0}(\alpha,\beta)}{\ep_{C_0}(\beta,\alpha)}=C_0(\alpha,\beta).\]
We choose our 2-cocycle $\ep_{C_0}$ to be
\[\ep_{C_0}(\alpha_i,\alpha_j)=
\begin{cases}
1&\text{if }i\leq j\\
(-1)^{\pair{\alpha_i,\alpha_j}}&\text{if }i>j
\end{cases}.\]
This 2-cocycle satisfies
\[\ep_{C_0}(\alpha,\beta)^2=1,\quad\ep_{C_0}(\alpha,\beta)=\ep_{C_0}(\nu\alpha,\nu\beta).\]
Also, the 2-cocycle $\ep_C$ is given by
\begin{align}\label{cocycle}
\ep_{C_0}(\alpha,\beta)=(-\zeta)^{\pair{\nu^{-1}\alpha,\beta}}\ep_C(\alpha,\beta).
\end{align}
Using these 2-cocycles, we obtain two normalized sections $e:L\rightarrow\hat{L}~(\text{resp. }\hat{L}_\nu)$ by
\begin{eqnarray*}
e:L&\rightarrow&\hat{L}~(\text{resp. }\hat{L}_\nu)\\
\alpha&\mapsto&e_\alpha
\end{eqnarray*}
with $e_0=1$, $\overline{e_\alpha}=\alpha$ and $e_\alpha e_\beta=\ep_{C_0}(\alpha,\beta)e_{\alpha+\beta}$ $(\text{resp. }\ep_C(\alpha,\beta)e_{\alpha+\beta})$.

According to \cite{CLM},  there exists an automorphism $\hat{\nu}$ of $\hat{L}$ such that
\begin{align}\label{ass}
\overline{\hat{\nu}a}=\nu\overline{a},\quad\hat{\nu}a=a~\text{if }\nu\overline{a}=\overline{a}.
\end{align}
In order to write down this automorphism explicitly, we will use the following notation defined in \cite{CMP}.
Set
\[\alpha_i^{(j)}=\alpha_i+\cdots+\alpha_{i+j-1}=\sum_{k=0}^{j-1}\alpha_{i+k}\]
for the simple roots of $\geh$.
We have the set of roots of $\geh$ as follows.
\begin{align}\label{root}
\Delta=\{\pm\alpha_i^{(j)}\mid 1\leq i\leq 2l,1\leq j\leq 2l-i+1\}.
\end{align}
For this notation, we have
\[\nu(\alpha_i^{(j)})=\alpha_{2l-i-j+2}^{(j)}.\]
Thus $\alpha_i^{(2l-2i+2)}\ (i=1,\ldots,l)$ are invariant under the automorphism $\nu$.
From \cite{CLM, CMP}, we can choose $\hat{\nu}$ to be
\begin{align*}
\hat{\nu}(e_{\pm\alpha_i})=&-e_{\pm\alpha_{2l-i+1}}\text{ if }i\notin\{l,l+1\},\\
\hat{\nu}(e_{\pm\alpha_l})=&\pm\zeta e_{\pm\alpha_{l+1}},\\
\hat{\nu}(e_{\pm\alpha_{l+1}})=&\pm\zeta e_{\pm\alpha_l}.
\end{align*}
As in \cite{CMP}, we say that $\alpha_i^{(j)}$ contains $\alpha_m$ if $i\leq m\leq i+j-1$.
Now we have the following proposition (See \cite{CMP}).
\begin{proposition}\label{CMP prop}
The automorphism $\hat{\nu}$ of $\hat{L}$ is completely determined as follows.
\begin{align}
\hat{\nu}\left(e_{\pm\alpha_i^{(j)}}\right)&=-e_{\pm\alpha_{2l-i-j+2}^{(j)}}\hspace{0.55cm}\text{if }\alpha_i^{(j)}\text{ does not contain }\alpha_l\text{ or }\alpha_{l+1},\label{Lauto}\\
\hat{\nu}\left(e_{\pm\alpha_i^{(j)}}\right)&=\pm\zeta e_{\pm\alpha_{2l-i-j+2}^{(j)}}\quad\text{if }\alpha_i^{(j)}\text{ contains exactly one of }\alpha_l\text{ or }\alpha_{l+1},\label{Lauto2}\\
\hat{\nu}\left(e_{\pm\alpha_i^{(j)}}\right)&=e_{\pm\alpha_{2l-i-j+2}^{(j)}}\hspace{0.7cm}\text{ if }\alpha_i^{(j)}\text{ contains both }\alpha_l\text{ and }\alpha_{l+1}.\label{Lauto3}
\end{align}
\end{proposition}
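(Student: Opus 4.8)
The plan is to compute $\hat{\nu}(e_{\pm\alpha_i^{(j)}})$ by hand, using only that $\hat{\nu}$ is an automorphism of $\hat{L}$ together with its prescribed action on simple root vectors, and reducing everything to a bookkeeping of the $2$-cocycle $\ep_{C_0}$ and the commutator map $C_0$. The first step is to identify $e_{\pm\alpha_i^{(j)}}$ with the product of the corresponding simple root elements taken in increasing order of index. From the explicit form of $\ep_{C_0}$ one has $\ep_{C_0}(\alpha_m,\alpha_{m'})=1$ whenever $m<m'$, and hence (spelling out $\ep_{C_0}$ on the relevant non-simple pairs via the cocycle identity, or simply using that $\ep_{C_0}$ is bi-multiplicative) $\ep_{C_0}(\alpha_i^{(k)},\alpha_{i+k})=1$ for all admissible $i,k$; the same holds with all signs reversed. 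Therefore
\[e_{\pm\alpha_i^{(j)}}=e_{\pm\alpha_i}\,e_{\pm\alpha_{i+1}}\cdots e_{\pm\alpha_{i+j-1}}\qquad\text{in }\hat{L}.\]

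Next I would apply $\hat{\nu}$ factor by factor. Writing $\lambda_m^{\pm}=-1$ if $m\notin\{l,l+1\}$ and $\lambda_m^{\pm}=\pm\zeta$ if $m\in\{l,l+1\}$, the given data read $\hat{\nu}(e_{\pm\alpha_m})=\lambda_m^{\pm}\,e_{\pm\alpha_{2l-m+1}}$, so
\[\hat{\nu}(e_{\pm\alpha_i^{(j)}})=\left(\prod_{m=i}^{i+j-1}\lambda_m^{\pm}\right)e_{\pm\alpha_{2l-i+1}}e_{\pm\alpha_{2l-i}}\cdots e_{\pm\alpha_{2l-i-j+2}},\]
a product in which the indices now \emph{decrease}. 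To recognise the right-hand side as a multiple of $e_{\pm\alpha_{2l-i-j+2}^{(j)}}$ I would reorder it into increasing order using $e_{\pm\alpha_a}e_{\pm\alpha_b}=C_0(\alpha_a,\alpha_b)\,e_{\pm\alpha_b}e_{\pm\alpha_a}$ with $C_0(\alpha_a,\alpha_b)=(-1)^{\pair{\alpha_a,\alpha_b}}$. Reversing a block of $j$ consecutive indices uncrosses all $\binom{j}{2}$ pairs; exactly $j-1$ of these are pairs of consecutive indices, each contributing $-1$, while the remaining pairs are orthogonal and contribute $1$, so the accumulated scalar is $(-1)^{j-1}$. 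Hence
\[\hat{\nu}(e_{\pm\alpha_i^{(j)}})=(-1)^{j-1}\left(\prod_{m=i}^{i+j-1}\lambda_m^{\pm}\right)e_{\pm\alpha_{2l-i-j+2}^{(j)}}.\]

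Finally I would evaluate this scalar in the three cases, which just amounts to counting how many of the indices $l,l+1$ lie in $\{i,\ldots,i+j-1\}$, equivalently how many of $\alpha_l,\alpha_{l+1}$ are contained in $\alpha_i^{(j)}$. If none, then $\prod_m\lambda_m^{\pm}=(-1)^{j}$ and the scalar is $(-1)^{2j-1}=-1$, which is \eqref{Lauto}. If exactly one, then $\prod_m\lambda_m^{\pm}=(\pm\zeta)(-1)^{j-1}$ and the scalar is $\pm\zeta$, which is \eqref{Lauto2}. If both, then $\prod_m\lambda_m^{\pm}=\zeta^{2}(-1)^{j-2}=(-1)^{j-1}$ and the scalar is $(-1)^{2(j-1)}=1$, which is \eqref{Lauto3}. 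Since $\nu(\alpha_i^{(j)})=\alpha_{2l-i-j+2}^{(j)}$, this is precisely the assertion of the proposition.

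The case check and the term-by-term application of $\hat{\nu}$ are mechanical; the two steps that require care are the identification of $e_{\pm\alpha_i^{(j)}}$ with the increasing ordered product — which genuinely uses the specific cocycle $\ep_{C_0}$ and not merely its formal properties — and the reordering sign $(-1)^{j-1}$. I expect the former to be the main point to nail down, although it is short; once both are in hand the proposition is immediate.
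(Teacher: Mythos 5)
Your computation is correct: the identification $e_{\pm\alpha_i^{(j)}}=e_{\pm\alpha_i}\cdots e_{\pm\alpha_{i+j-1}}$ via the bimultiplicative cocycle, the reordering sign $(-1)^{j-1}$ (which one can also check as $(-1)^{\sum_{a<b}\pair{\beta_a,\beta_b}}=(-1)^{1-j}$ using $\pair{\alpha_i^{(j)},\alpha_i^{(j)}}=2$), and the three case evaluations all check out. The paper gives no proof of this proposition, deferring to \cite{CMP}; your argument is precisely the cocycle bookkeeping that underlies the cited result, so there is nothing to add.
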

From Proposition \ref{CMP prop}, we have $\hat{\nu}\left(e_{\alpha_i^{(2l-2i+2)}}\right)=e_{\alpha_i^{(2l-2i+2)}}$ for $i=1,\ldots,l$.
In other words, our automorphism $\hat{\nu}$ satisfies (\ref{ass}) as desired.
The map $\hat{\nu}$ is also an automorphism of $\hat{L}_\nu$ satisfying (\ref{ass}).
We have $\hat{\nu}^4=1$.

We have the affine Lie algebra corresponding to $\heh$ by
\[\hat{\heh}=\heh\ot\C[t,t^{-1}]\oplus\C c\]
with Lie bracket given by
\[[\alpha\ot t^m,\beta\ot t^n]=\pair{\alpha,\beta}m\delta_{m+n,0}c,\quad[\hat{\heh},c]=0\]
for $m,n\in\Z$, $\alpha,\beta\in\heh$.
This affine Lie algebra has the $\Z$-gradation called the weight grading given by
\[{\rm wt}(\alpha\ot t^m)=-m,\quad{\rm wt}(c)=0\]
for $m\in\Z$ and $\alpha\in\heh$.
Consider the subalgebras $\hat{\heh}^\pm=\heh\ot t^{\pm1}\C[t^{\pm1}]$.
Then the Heisenberg subalgebra of $\hat{\heh}$ is given by $\hat{\heh}_\Z=\hat{\heh}^+\oplus\hat{\heh}^-\oplus\C c$.
We introduce the induced $\hat{\heh}$-module
\begin{align}\label{ind mod}
M(1)=U(\hat{\heh})\ot_{U(\heh\ot\C[t]\oplus\C c)}\C\simeq {\rm Sym}(\hat{\heh}^-),
\end{align}
where $\heh\ot\C[t]$ acts trivially on $\C$ and $c$ acts as 1.
$M(1)$ is an irreducible $\hat{\heh}_\Z$-module and $\Z$-graded.
Then we consider the induced $\hat{L}$-module
\[\C\{L\}=\C[\hat{L}]\ot_{\C[\pair{\zeta}]}\C\simeq\C[L].\]
For $a\in\hat{L}$, we write $\iota(a)=a\ot1\in\C\{L\}$.
The space $\C\{L\}$ is $\Z$-graded by letting weight be
\[{\rm wt}\ \iota(a)=\frac{1}{2}\pair{\overline{a},\overline{a}},\quad{\rm wt}(1)=0\]
for $a\in\hat{L}$.
The action of $\hat{L},\heh$ on $\C\{L\}$ is given by
\[a\cdot\iota(b)=\iota(ab),\quad h\cdot\iota(a)=\pair{h,\overline{a}}\iota(a)\]
for $a,b\in\hat{L}$, $h\in\heh$.
We also define the operator $z^h$ on $\C\{L\}$ by
\[z^h\iota(a)=z^{\pair{h,\overline{a}}}\iota(a).\]
We set
\[V_L=M(1)\ot\C\{L\}\simeq{\rm Sym}(\hat{\heh}^-)\ot\C[L]\]
and ${\bf 1}=1\ot\iota(1)$.
Note that $V_L$ has the tensor product grading and is a tensor product of $\hat{\heh}$-module on which $\hat{L}$ acts by its action on the second component.

We consider the vertex operator acting on $V_L$.
For $\alpha\in\heh$, $m\in\Z$, we set $\alpha(m)=\alpha\ot t^m$ and
\[\alpha(z)=\sum_{m\in\Z}\alpha(m)z^{-m-1}.\]
As in \cite{FLM}, we define
\[Y(\iota(a),z)=\overset{\circ}{\underset{\circ}{\phantom{a}}}e^{\sum_{m\neq0}\frac{-\overline{a}(m)}{m}z^{-m}}\overset{\circ}{\underset{\circ}{\phantom{a}}}az^{\overline{a}}\]
for $a\in\hat{L}$, where $\overset{\circ}{\underset{\circ}{\phantom{a}}}\cdot\overset{\circ}{\underset{\circ}{\phantom{a}}}$ means the normally ordered product in which the operators $\overline{a}(m)$ for $m<0$ are placed to the left of the operators $\overline{a}(m)$ for $m>0$.
More generally, for a vector $v=\beta_1(-n_1)\cdots\beta_m(-n_m)\ot\iota(a)\in V_L$ with $\beta_1,\ldots,\beta_m\in\heh$, $n_1,\ldots,n_m>0$ and $a\in\hat{L}$ we set
\[Y(v,z)=\overset{\circ}{\underset{\circ}{\phantom{a}}}\left(\frac{1}{(n_1-1)!}\left(\frac{d}{dz}\right)^{n_1-1}\beta_1(z)\right)\cdots\left(\frac{1}{(n_m-1)!}\left(\frac{d}{dz}\right)^{n_m-1}\beta_m(z)\right)Y(\iota(a),z)\overset{\circ}{\underset{\circ}{\phantom{a}}}.\]
This gives a well-defined linear map
\begin{eqnarray*}
V_L&\rightarrow&({\rm End}\ V_L)[[z,z^{-1}]]\\
v&\mapsto&Y(v,z)=\sum_{m\in\Z}v_mz^{-m-1},\quad v_m\in{\rm End}\ V_L.
\end{eqnarray*}
Set
\[\omega=\frac{1}{2}\sum_{i=1}^{2l}\gamma_i(-1)\gamma_i(-1){\bf 1}\in V_L,\]
where $\{\gamma_i\}_{i=1}^{2l}$ is an orthonormal basis of $\heh$.

By Chapter 8 in \cite{FLM2}, $(V_L,Y,{\bf 1},\omega)$ becomes a simple vertex operator algebra associated to the positive definite even lattice $L$ equipped with central charge equal to ${\rm rank}\ L=2l$.

Now we extend the automorphism $\hat{\nu}$ of $\hat{L}$ to an automorphism of $V_L$ and also denote it by $\hat{\nu}$ (cf. \cite{CLM}).
Note that the automorphism $\nu$ of $L$ acts in a natural way on $\hat{\heh}$ and $M(1)$, preserving the grading.
We have
\[\nu(u\cdot m)=\nu(u)\cdot\nu(m)\]
for $u\in\hat{\heh}$ and $m\in M(1)$.
Also the automorphism $\hat{\nu}$ of $\hat{L}$ is extended to $\C\{L\}$ satisfying conditions
\[\hat{\nu}(h\cdot\iota(a))=\nu(h)\cdot\hat{\nu}(\iota(a)),\ \hat{\nu}(z^h\cdot\iota(a))=z^{\nu(h)}\cdot\hat{\nu}(\iota(a))\text{ and }\hat{\nu}(a\cdot\iota(b))=\hat{\nu}(a)\cdot\hat{\nu}(\iota(b))\]
for $h\in\heh$ and $a,b\in\hat{L}$.
We take $\hat{\nu}$ on $V_L$ to be $\nu\ot\hat{\nu}$.
It follows that $\hat{\nu}$ is an automorphism of the vertex operator algebra $V_L$ which preserves the grading on $V_L$.

We construct the $\hat{\nu}$-twisted module for $V_L$ by following \cite{CLM,DL, L}.
For $j\in\Z$, set
\[\heh_{(j)}=\{h\in\heh\mid\nu h=\zeta^jh\}\subset\heh,\]
so that we have
\[\heh=\bigoplus_{j\in\Z/4\Z}\heh_{(j)}.\]
Here we identify $\heh_{(j\text{ mod }4)}$ with $\heh_{(j)}$.
Associated to this decomposition, we define a Lie algebra
\[\hat{\heh}[\nu]=\bigoplus_{m\in\frac{1}{4}\Z}\heh_{(4m)}\ot t^m\oplus\C c\]
with Lie bracket given by
\[[\alpha\ot t^m,\beta\ot t^n]=\pair{\alpha,\beta}m\delta_{m+n,0}c,\quad [\hat{\heh}[\nu],c]=0\]
for $m,n\in\frac{1}{4}\Z$ and $\alpha\in\heh_{(4m)}$ and $\beta\in\heh_{(4n)}$.
As in the untwisted case, we give the $\frac{1}{4}\Z$-gradation to $\hat{\heh}[\nu]$ by
\[{\rm wt}(\alpha\ot t^m)=-m,\quad {\rm wt}(c)=0.\]
Consider the subalgebras $\hat{\heh}[\nu]^\pm=\bigoplus_{\pm m>0}\heh_{(4m)}\ot t^m$.
Then the Heisenberg subalgebra of $\hat{\heh}[\nu]$ is obtained by $\hat{\heh}[\nu]_{\frac{1}{4}\Z}=\hat{\heh}[\nu]^+\oplus\hat{\heh}[\nu]^-\oplus\C c$.
The induced $\hat{\heh}[\nu]$-module is obtained in similar to (\ref{ind mod}), that is we have
\[S[\nu]=U(\hat{\heh}[\nu])\ot_{U(\bigoplus_{m\geq0}\heh_{(4m)}\ot t^m\oplus\C c)}\C\simeq{\rm Sym}(\hat{\heh}[\nu]^-)\]
where $\bigoplus_{m\geq0}\heh_{(4m)}\ot t^m$ acts trivially on $\C$ and $c$ acts as 1.
This module is also an irreducible $\hat{\heh}[\nu]_{\frac{1}{4}\Z}$-module.

We continue to follow \cite{CLM, L}.
Let $P_j$ be the projection from $\heh$ onto $\heh_{(j)}$ for $j\in\Z/4\Z$.
We set
\[N=(1-P_0)\heh\cap L=\{\alpha\in L\mid\pair{\alpha,\heh_{(0)}}=0\}.\]
Explicitly, we have
\[N={\rm Span}_{\Z}\{\alpha_i-\alpha_{2l-i+1}\mid 1\leq i\leq l\}.\]
See \cite{CMP}.
Let $\hat{N}$ be the subgroup of $\hat{L}_\nu$ obtained by pulling buck the subgroup $N$ of $L$.
By Proposition 6. 1 in \cite{L}, there exists a unique homomorphism $\tau:\hat{N}\rightarrow\C^\times$such that
\[\tau(\zeta)=\zeta,\quad\tau(a\hat{\nu}a^{-1})=\zeta^{-\sum_{j=0}^3\pair{\nu^j\overline{a},\overline{a}}/2}\]
for $a\in\hat{L}_\nu$.
Let $\C_\tau$ be the one-dimensional $\hat{N}$-module $\C$ with this character $\tau$ and consider the induced $\hat{L}_\nu$-module
\[U_T=\C[\hat{L}_\nu]\ot_{\C[\hat{N}]}\C_\tau\simeq\C[L/N].\]
$\hat{L}_\nu$ and $\heh_{(0)}$ act on $U_T$ as
\begin{align}\label{actut1}
a\cdot b\ot t=ab\ot t,\\
\label{actut2}
\alpha\cdot a\ot t=\pair{\alpha,\overline{a}}a\ot t
\end{align}
for $a,b\in\hat{L}_\nu$, $t\in\C_\tau$, $\alpha\in\heh_{(0)}$ and we set $[\alpha,a]=\pair{\alpha,\overline{a}}a$.
The operator $z^h$ on $U_T$ is also defined by
\begin{align}\label{actut3}
z^h\cdot a\ot t=z^{\pair{h,\overline{a}}}a\ot t
\end{align}
for $h\in\heh_{(0)}$.
For $h\in\heh_{(0)}$ such that $\pair{h,L}\subset\Z$, we define the operator $\zeta^h$ on $U_T$ by $\zeta^h\cdot a\ot t=\zeta^{\pair{h,\overline{a}}}a\ot t$.
Then for $a\in\hat{L}_\nu$ we have $z^ha=az^{h+\pair{h,\overline{a}}}$ and $\zeta^ha=a\zeta^{h+\pair{h,\overline{a}}}$.
Moreover, as an operator on $U_T$, we have
\begin{align}\label{autorel}
\hat{\nu}a=a\zeta^{-\sum_{j=0}^3\nu^j\overline{a}-\sum_{j=0}^3\pair{\nu^j\overline{a},\overline{a}}/2}.
\end{align}
Then $U_T$ is decomposed into
\[U_T=\bigoplus_{\alpha\in P_0L}U_\alpha,\]
where $U_\alpha=\{u\in U_T\mid h\cdot u=\pair{h,\alpha}u\text{ for }h\in\heh_{(0)}\}$ satisfying $a\cdot U_\alpha\subset U_{\alpha+\overline{a}_{(0)}}$ for $a\in\hat{L}_\nu$.
Define the $\frac{1}{4}\Z$-grading on $U_T$ by
\begin{align}\label{grad}
{\rm wt}(u)=\frac{1}{2}\pair{\alpha,\alpha}
\end{align}
calculated by $d$ for $u\in U_\alpha$ and $\alpha\in P_0L$.
Set $\tilde{\heh}[\nu]=\hat{\heh}[\nu]\oplus\C d$.
Then $U_T$ becomes an $\tilde{\heh}[\nu]$-module by letting $\hat{\heh}[\nu]_{\frac{1}{4}\Z}\subset\tilde{\heh}[\nu]$ act trivially.
We set
\[V_L^T=S[\nu]\ot U_T\simeq{\rm Sym}(\hat{\heh}[\nu]^-)\ot\C[L/N]\]
to be a tensor product of $\tilde{\heh}[\nu]$-module and ${\bf 1}_T=1\ot(e_0\ot1)$.
Note that $\hat{L}_\nu$ acts by its action on the second component and $V_L^T$ is graded by weights described above.

Next we consider the $\hat{\nu}$-twisted vertex operator by following {\S}2 of \cite{CLM}.
For $\alpha\in\heh$ and $j\in\Z/4\Z$, let $\alpha_{(j)}$ stand for $P_j\alpha\in\heh$.
Set 
\begin{align}
\label{alpha}\alpha^{\hat{\nu}}(m)=\alpha_{(4m)}\ot t^m,\\
\label{TVO}\alpha^{\hat{\nu}}(z)=\sum_{m\in\frac{1}{4}\Z}\alpha^{\hat{\nu}}(m)z^{-m-1}
\end{align}
and
\begin{align}\label{E}
E^\pm(\alpha,z)=\exp\left(\sum_{\pm m\in\frac{1}{4}\Z_+}\frac{\alpha^{\hat{\nu}}(m)}{m}z^{-m}\right)\in{\rm End}\ V_L^T[[z^{\frac{1}{4}},z^{-\frac{1}{4}}]].
\end{align}
Note that from Proposition 3.4 of \cite{LW}, we have the following commutation relation
\begin{align}\label{Ecom}
E^+(\alpha,z)E^-(\beta,w)=E^-(\beta,w)E^+(\alpha,z)\prod_{j=0}^3\left(1-\zeta^j\frac{w^\frac{1}{4}}{z^\frac{1}{4}}\right)^{\pair{\nu^j\alpha,\beta}}.
\end{align}
From the commutation relation of $\hat{\heh}[\nu]$, we also have
\begin{align}\label{Eh}
[E^-(\alpha,z)E^+(\alpha,z),h^{\hat{\nu}}(m)]=\pair{h_{(4m)},\alpha_{(-4m)}}z^mE^-(\alpha,z)E^+(\alpha,z)c
\end{align}
for $h\in\heh$, $m\in\frac{1}{4}\Z$.
For $a\in\hat{L}$, as defined in \cite{CLM, L}, we consider the $\hat{\nu}$-twisted vertex operator
\begin{align}\label{TVO2}
Y^{\hat{\nu}}(\iota(a),z)=2^{-\pair{\overline{a},\overline{a}}}\sigma(\overline{a})E^-(-\overline{a},z)E^+(-\overline{a},z)az^{\overline{a}_{(0)}+\pair{\overline{a}_{(0)},\overline{a}_{(0)}}/2-\pair{\overline{a},\overline{a}}/2}
\end{align}
acting on $V_L^T$, where
\[\sigma(\overline{a})=2^\frac{\pair{\overline{a},\overline{a}}}{2}(1+\zeta)^{\pair{\nu\overline{a},\overline{a}}}.\]
We view $a$ in the right-hand side of (\ref{TVO2}) as an element of $\hat{L}_\nu$ by using the identification between $\hat{L}\text{ and }\hat{L}_\nu$ given by (\ref{cocycle}).
Define the component operators $Y_\alpha^{\hat{\nu}}(m)$ for $\alpha\in L,m\in\frac{1}{4}\Z$ by
\[Y^{\hat{\nu}}(\iota(e_\alpha),z)=\sum_{m\in\frac{1}{4}\Z}Y_\alpha^{\hat{\nu}}(m)z^{-m-\pair{\alpha,\alpha}/2}.\]
For $v=\beta_1(-n_1)\cdots\beta_m(-n_m)\ot\iota(a)\in V_L$, we set
\[W(v,z)=\overset{\circ}{\underset{\circ}{\phantom{a}}}\left(\frac{1}{(n_1-1)!}\left(\frac{d}{dz}\right)^{n_1-1}\beta_1^{\hat{\nu}}(z)\right)\cdots\left(\frac{1}{(n_m-1)!}\left(\frac{d}{dz}\right)^{n_m-1}\beta_m^{\hat{\nu}}(z)\right)Y^{\hat{\nu}}(\iota(a),z)\overset{\circ}{\underset{\circ}{\phantom{a}}}.\]
The map $W:V_L\rightarrow{\rm End}\ V_L^T[[z^\frac{1}{4},z^{-\frac{1}{4}}]]$ gives well-defined linear operator on $V_L^T$ as in the untwisted case.
Recall the operator $\Delta_z$ on $V_L$ defined in \cite{CLM} which is obtained from an orthonormal basis $\{\gamma_i\}_{i=1}^{2l}$ of $\heh$.
Set
\[\Delta_z=\sum_{m,n\geq0}\sum_{j=0}^3\sum_{i=1}^{2l}c_{mnj}(\nu^{-j}\gamma_i)(m)\gamma_i(n)z^{-m-n},\]
where constants $c_{mnj}$ are given by
\[\sum_{m,n\geq0}c_{mn0}z^mw^n=-\frac{1}{2}\sum_{j=1}^3\log\left(\frac{(1+z)^\frac{1}{4}-\zeta^{-j}(1+w)^\frac{1}{4}}{1-\zeta^{-j}}\right),\]
\[\sum_{m,n\geq0}c_{mnj}z^mw^n=\frac{1}{2}\log\left(\frac{(1+z)^\frac{1}{4}-\zeta^j(1+w)^\frac{1}{4}}{1-\zeta^j}\right)\quad\text{for }j\neq0.\]
Note that $\Delta_z$ is independent of the choice of the orthonormal basis.
Since $c_{00j}=0$ for all $j$, the map $e^{\Delta_z}$ becomes well-defined operator on $V_L$ and we have $e^{\Delta_z}v\in V_L[z^{-1}]$ for all $v\in V_L$.
Then, for $v\in V_L$, we have the $\hat{\nu}$-twisted vertex operator as
\[Y^{\hat{\nu}}(v,z)=W(e^{\Delta_z}v,z).\]

By \cite{DL, FLM, FLM2, L}, $(V_L^T,Y^{\hat{\nu}})$ has the structure of an irreducible $\hat{\nu}$-twisted $V_L$-module.
We have the twisted Jacobi identity
\begin{align}\label{Jacobi}
\nonumber z_0^{-1}\delta\left(\frac{z_1-z_2}{z_0}\right)Y^{\hat{\nu}}(u,z_1)Y^{\hat{\nu}}(v,z_2)-z_0^{-1}\delta\left(\frac{z_2-z_1}{-z_0}\right)Y^{\hat{\nu}}(v,z_2)Y^{\hat{\nu}}(u,z_1)\\
=z_2^{-1}\frac{1}{4}\sum_{j=0}^3\delta\left(\zeta^j\frac{(z_1-z_0)^\frac{1}{4}}{z_2^\frac{1}{4}}\right)Y^{\hat{\nu}}(Y(\hat{\nu}^ju,z_0)v,z_2)
\end{align}
for $u,v\in V_L$.
The commutator formula of twisted vertex operator (cf. \cite{FLM}) is derived from this identity.
By using (\ref{autorel}), we have
\begin{align}\label{TVO rel}
Y^{\hat{\nu}}(\hat{\nu}^jv,z)=Y^{\hat{\nu}}(v,z)|_{z^\frac{1}{4}\rightarrow\zeta^{-j}z^\frac{1}{4}}.
\end{align}
See \cite{CLM, L}.
We define the operators $L^{\hat{\nu}}(m)$ for $m\in\Z$ by
\[Y^{\hat{\nu}}(\omega,z)=\sum_{m\in\Z}L^{\hat{\nu}}(m)z^{-m-2}.\]
These operators have the commutation relation
\[[L^{\hat{\nu}}(m),L^{\hat{\nu}}(n)]=(m-n)L^{\hat{\nu}}(m+n)+\frac{l}{6}(m^3-m)\delta_{m+n,0}\]
for $m,n\in\Z$.
That is $\{L^{\hat{\nu}}(m)\mid m\in\Z\}$ generates the Virasoro vertex algebra submodule.
By Proposition 6.3 of \cite{DL}, we have
\[[Y^{\hat{\nu}}(\omega,z_1),Y^{\hat{\nu}}(\iota(a),z_2)]=z_2^{-1}\delta\left(\frac{z_1}{z_2}\right)\frac{d}{dz_2}Y^{\hat{\nu}}(\iota(a),z_2)-\frac{\pair{\alpha,\alpha}}{2}z_2^{-1}\left(\frac{d}{dz_1}\delta\left(\frac{z_1}{z_2}\right)\right)Y^{\hat{\nu}}(\iota(a),z_2)\]
for $a\in\hat{L}$.
We recall from \cite{CLM, DL} that
\[L^{\hat{\nu}}(0)1=\frac{l}{8}1,\quad L^{\hat{\nu}}(0)u=\left(\frac{\pair{\alpha,\alpha}}{2}+\frac{l}{8}\right)u\]
for $1\in S[\nu]$, $u\in U_\alpha\subset V_L^T$ and
\[[L^{\hat{\nu}}(0),\alpha^{\hat{\nu}}(m)]=-m\alpha^{\hat{\nu}}(m)\]
for $\alpha\in\heh_{(4m)}$, $m\in\frac{1}{4}\Z$.
Therefore by using the weight gradings on $S[\nu]$ and $U_T$, we have
\[L^{\hat{\nu}}(0)v=\left({\rm wt}(v)+\frac{l}{8}\right)v\]
for a homogeneous vector $v\in V_L^T$.

\subsection{Representation of twisted affine Lie algebras of type $A_{2l}^{(2)}$ on $V_L^T$}
The aim of this section is to recall the twisted vertex operator construction of affine Lie algebras $A_{2l}^{(2)}$ and give their representation on $V_L^T$.
We have the root space decomposition
\[\geh=\heh\op\bigoplus_{\alpha\in\Delta}\C x_\alpha,\]
where $x_\alpha$ is a root vector such that $[h,x_\alpha]=\alpha(h)$ for $h\in\heh$.
Now, we normalize a root vector $x_\alpha$ so that we have
\[[x_\alpha,x_\beta]=
\begin{cases}
\ep_{C_0}(\alpha,-\alpha)\alpha&\text{if }\alpha+\beta=0\\
\ep_{C_0}(\alpha,\beta)x_{\alpha+\beta}&\text{if }\alpha+\beta\in\Delta\\
0&\text{otherwise.}
\end{cases}\]
Then the symmetric bilinear form $\pair{\cdot,\cdot}$ on $\geh$ reads as
\[\pair{h,x_\alpha}=0,\quad\pair{x_\alpha,x_\beta}=
\begin{cases}\ep_{C_0}(\alpha,-\alpha)&\text{if }\alpha+\beta=0\\
0&\text{if }\alpha+\beta\neq0
\end{cases}\]
As in \cite{CLM, CMP, L}, we introduce the map $\varphi:(\Z/4\Z)\times L\rightarrow\pair{\zeta}$ which is defined by the condition
\[\hat{\nu}^j\iota(e_\alpha)=\varphi(j,\alpha)\iota(e_{\nu^j\alpha}).\]
From the calculation of Proposition \ref{CMP prop}, we have
\begin{align*}
&\varphi(j,\pm\alpha_i^{(j)})=(-1)^j\quad\text{if }\alpha_i^{(j)}\text{ does not contain }\alpha_l\text{ or }\alpha_{l+1},\\
&\varphi(j,\pm\alpha_i^{(j)})=(\pm\zeta)^j\hspace{0.35cm}\text{if }\alpha_i^{(j)}\text{ contains exactly one of }\alpha_l\text{ or }\alpha_{l+1},\\
&\varphi(j,\pm\alpha_i^{(j)})=1\hspace{1.05cm}\text{if }\alpha_i^{(j)}\text{ contains both }\alpha_l\text{ and }\alpha_{l+1}
\end{align*}
for $j\in\Z/4\Z$.
The automorphism $\nu$ of $\heh$ is lifted to an automorphism of $\geh$ by
\[\nu^jx_\alpha=\varphi(j,\alpha)x_{\nu^j\alpha}.\]
Note that $\nu^4=1$ and $\nu$ preserves $[\cdot,\cdot]$ (resp. $\pair{\cdot,\cdot}$).
Now, the automorphism $\nu$ of $\geh$ is explicitly given by
\begin{align}
\label{Gauto}\nu\left(x_{\pm\alpha_i^{(j)}}\right)&=-x_{\pm\alpha_{2l-i-j+2}^{(j)}}\hspace{0.55cm}\text{if }\alpha_i^{(j)}\text{ does not contain }\alpha_l\text{ or }\alpha_{l+1},\\
\label{Gauto2}\nu\left(x_{\pm\alpha_i^{(j)}}\right)&=\pm\zeta x_{\pm\alpha_{2l-i-j+2}^{(j)}}\quad\text{if }\alpha_i^{(j)}\text{ contains exactly one of }\alpha_l\text{ or }\alpha_{l+1},\\
\label{Gauto3}\nu\left(x_{\pm\alpha_i^{(j)}}\right)&=x_{\pm\alpha_{2l-i-j+2}^{(j)}}\hspace{0.7cm}\text{ if }\alpha_i^{(j)}\text{ contains both }\alpha_l\text{ and }\alpha_{l+1}.
\end{align}

For $j\in\Z$ set
\[\geh_{(j)}=\{x\in\geh\mid\nu x=\zeta^jx\}.\]
Now, based on (\ref{Gauto})-(\ref{Gauto3}) we obtain $\geh_{(j)}\ (j\in\Z/4\Z)$ as follows.
\begin{align*}
\geh_{(0)}=&\bigoplus_{i=1}^l\left(\C x_{\alpha_i^{(2l-2i+2)}}\op\C(\alpha_i+\alpha_{2l-i+1})\op\C x_{-\alpha_i^{(2l-2i+2)}}\right)\op\bigoplus_{\chi=\pm1}\bigoplus_{j=1}^{l-1}\bigoplus_{i=1}^{l-j}\C\left(x_{\chi\alpha_i^{(j)}}-x_{\chi\alpha_{2l-i-j+2}^{(j)}}\right)\\
&\op\bigoplus_{\chi=\pm1}\bigoplus_{j=1}^{l-1}\bigoplus_{i=j+1}^l\C\left(x_{\chi\alpha_i^{(2l-i-j+2)}}+x_{\chi\alpha_j^{(2l-i-j)}}\right),\\
\geh_{(1)}=&\bigoplus_{i=1}^l\left(\C\left(x_{\alpha_i^{(l-i+1)}}+x_{\alpha_{l+1}^{(l-i+1)}}\right)\op\C\left(x_{-\alpha_i^{(l-i+1)}}-x_{-\alpha_{l+1}^{(l-i+1)}}\right)\right),\\
\geh_{(2)}=&\bigoplus_{i=1}^l\C(\alpha_i-\alpha_{2l-i+1})\op\bigoplus_{\chi=\pm1}\bigoplus_{j=1}^{l-1}\bigoplus_{i=1}^{l-j}\C\left(x_{\chi\alpha_i^{(j)}}+x_{\chi\alpha_{2l-i-j+2}^{(j)}}\right)\\
&\op\bigoplus_{\chi=\pm1}\bigoplus_{j=1}^{l-1}\bigoplus_{i=j+1}^l\C\left(x_{\chi\alpha_i^{(2l-i-j+2)}}-x_{\chi\alpha_j^{(2l-i-j)}}\right),\\
\geh_{(3)}=&\bigoplus_{i=1}^l\left(\C\left(x_{\alpha_i^{(l-i+1)}}-x_{\alpha_{l+1}^{(l-i+1)}}\right)\op\C\left(x_{-\alpha_i^{(l-i+1)}}+x_{-\alpha_{l+1}^{(l-i+1)}}\right)\right).
\end{align*}
The twisted affine Lie algebra $\hat{\geh}[\nu]$ associated to $\geh$ and $\nu$ is given by
\[\hat{\geh}[\nu]=\bigoplus_{m\in\frac{1}{4}\Z}\geh_{(4m)}\ot t^m\oplus\C c\]
with Lie bracket
\[[x\ot t^m,y\ot t^n]=[x,y]\ot t^{m+n}+\pair{x,y}m\delta_{m+n,0}c,\quad [\hat{\geh}[\nu],c]=0\]
for $m,n\in\frac{1}{4}\Z$, $x\in\geh_{(4m)}$ and $y\in\geh_{(4n)}$.
We also define the Lie algebra $\tilde{\geh}[\nu]$ by
\[\tilde{\geh}[\nu]=\hat{\geh}[\nu]\oplus\C d,\]
where $d$ is the degree operator such that
\[[d,x\ot t^m]=mx\ot t^m\]
for $m\in\frac{1}{4}\Z$, $x\in\geh_{(4m)}$ and $[d,c]=0$.
This Lie algebra $\hat{\geh}[\nu]$ ( or $\tilde{\geh}[\nu]$) is isomorphic to the twisted affine Lie algebra of type $A_{2l}^{(2)}$.
See Table \ref{Dynkin} for its Dynkin diagram.

\begin{table}[h]
\begin{align*}
\xymatrix{*{\underset{0}{\bigcirc}}\ar@<0.8ex>@{<=}[r]&*{\underset{1}{\bigcirc}}\ar@<0.8ex>@{-}[r]&*{\underset{2}{\bigcirc}}\ar@<0.8ex>@{--}[r]&*{\underset{l-2}{\bigcirc}}\ar@<0.8ex>@{-}[r]&*{\underset{l-1}{\bigcirc}}\ar@<0.8ex>@{<=}[r]&*{\underset{l}{\bigcirc}}}
\end{align*}
\caption{Twisted affine Dynkin diagram of type $A_{2l}^{(2)}$\label{Dynkin}}
\end{table}

\begin{theorem}\label{rep}
$($\cite[Theorem 3.1]{CLM}, \cite[Theorem 3]{FLM},\cite[Theorem 9.1]{L}$)$ The representation of $\hat{\heh}[\nu]$ on $V_L^T$ extends uniquely to a Lie algebra representation of $\tilde{\geh}[\nu]$ on $V_L^T$ such that
\[(x_\alpha)_{(4m)}\ot t^m\mapsto Y_\alpha^{\hat{\nu}}(m)\]
for all $m\in\frac{1}{4}\Z$ and $\alpha\in L_2=\{\alpha\in L\mid\pair{\alpha,\alpha}=2\}$.
Moreover $V_L^T$ is irreducible as a $\tilde{\geh}[\nu]$-module.
\end{theorem}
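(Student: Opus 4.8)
The plan is to reduce the statement to the twisted vertex operator calculus already set up in the excerpt, following the pattern of \cite{CLM, L} but keeping track of the type $A_{2l}^{(2)}$ normalizations. First I would record what has to be checked: since $\hat{\heh}[\nu]$ already acts on $V_L^T$ and the operators $\beta^{\hat{\nu}}(m)$ for $\beta\in\heh_{(4m)}$ realize its Heisenberg part, it remains to (i) show that the assignment $(x_\alpha)_{(4m)}\ot t^m\mapsto Y_\alpha^{\hat{\nu}}(m)$ together with the Heisenberg action is compatible with all brackets of $\tilde{\geh}[\nu]$, (ii) check that $\C c\mapsto \mathrm{id}$ and $d\mapsto L^{\hat{\nu}}(0)$-type grading operator are respected, and (iii) establish irreducibility. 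For (i) the key input is the twisted Jacobi identity (\ref{Jacobi}) and the resulting commutator formula for twisted vertex operators: extracting the appropriate coefficients of $z_0, z_1, z_2$ and using the delta-function expansion $\frac14\sum_{j=0}^3\delta(\zeta^j(z_1-z_0)^{1/4}/z_2^{1/4})$ converts the operator product $Y^{\hat\nu}(\iota(e_\alpha),z_1)Y^{\hat\nu}(\iota(e_\beta),z_2)$ into $Y^{\hat\nu}(Y(\hat\nu^j \iota(e_\alpha),z_0)\iota(e_\beta),z_2)$, and the right-hand side is governed by the lattice OPE $Y(\iota(e_\alpha),z_0)\iota(e_\beta) = \ep_{C_0}(\alpha,\beta) z_0^{\pair{\alpha,\beta}}\iota(e_{\alpha+\beta})+\cdots$. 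For roots $\alpha,\beta\in L_2$ with $\pair{\alpha,\beta}\in\{0,-1,-2\}$ this produces exactly the three cases of the bracket $[x_\alpha,x_\beta]$ normalized via $\ep_{C_0}$ in the previous subsection, the term $\pair{x_\alpha,x_\beta}m\delta_{m+n,0}c$ coming from the $\pair{\alpha,\alpha}=2$ double-pole term and the central extension; the mixed brackets $[h^{\hat\nu}(m),Y_\alpha^{\hat\nu}(n)]$ follow from (\ref{Eh}). The $\hat\nu$-equivariance relation (\ref{TVO rel}) and the values of $\varphi(j,\alpha)$ are what make the sum over $j$ collapse correctly and match the lifted automorphism $\nu$ on $\geh$ given in (\ref{Gauto})--(\ref{Gauto3}); this is the bookkeeping that distinguishes the $r=4$ setting from the generic case.

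\textbf{Reduction to known theorems.} Rather than redo this computation from scratch, the cleanest route is to invoke \cite[Theorem 3.1]{CLM}, \cite[Theorem 3]{FLM} and \cite[Theorem 9.1]{L} directly: these establish, for a general even lattice $L$ with an isometry $\nu$ satisfying the hypotheses recalled in the excerpt (and in particular allowing the period of $\hat\nu$ to exceed its order, which is precisely why we took $r=4$), that $Y^{\hat\nu}(\cdot,z)$ realizes the twisted affine algebra $\hat{\geh}[\nu]$ on $V_L^T$ and that $V_L^T$ is irreducible. So the proof is essentially: verify that our $L$ (the $A_{2l}$ root lattice), our $\nu$ (the order-$2$ Dynkin diagram automorphism, with $\hat\nu$ of order $4$), our cocycles $\ep_{C_0},\ep_C$, the section $e$, the character $\tau$ on $\hat N$, and the normalizations $C_0, C$ all satisfy the axioms of \cite{L} Section~2 and \cite{CLM}; this was arranged in the preceding pages (the identities $C_0(\alpha,\alpha)=C(\alpha,\alpha)=1$, $\hat\nu^4=1$, (\ref{ass}), the compatibility (\ref{cocycle}) of the two cocycles, and $\pair{\nu^{r/2}\alpha,\alpha}\in 2\Z$). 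Granting these, the cited theorems apply verbatim and give both the extension of the $\hat\heh[\nu]$-action to a $\tilde\geh[\nu]$-action and irreducibility.

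\textbf{Identifying the algebra as $A_{2l}^{(2)}$.} One extra point that deserves a sentence: the abstract theorems give an action of ``the twisted affine algebra attached to $(\geh,\nu)$'', and we must note this is $A_{2l}^{(2)}$ rather than, say, $A_{2l-1}^{(2)}$ or $D_{l+1}^{(2)}$. This is a standard fact about twisted affine algebras: the fixed-point subalgebra $\geh_{(0)}$ together with the $\zeta$-eigenspace data computed in the excerpt (the four spaces $\geh_{(0)},\dots,\geh_{(3)}$, with $\geh_{(0)}$ of type $C_l$ and the grading having period $4$) realizes the loop presentation of $A_{2l}^{(2)}$; its Dynkin diagram is the one in Table~\ref{Dynkin}. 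Alternatively one can cite \cite{K} Chapter~8, where this twisted construction from the $A_{2l}$ diagram automorphism is identified with $A_{2l}^{(2)}$.

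\textbf{Main obstacle.} The genuine difficulty, as in \cite{CLM, CMP}, is not the abstract machinery but checking that the \emph{specific} cocycle and automorphism data are mutually consistent — that $\hat\nu$ as pinned down in Proposition~\ref{CMP prop} really is an automorphism of \emph{both} $\hat L$ and $\hat L_\nu$ satisfying (\ref{ass}), that the homomorphism $\tau$ exists on $\hat N$ with the prescribed values, and that the map $\varphi$ computed from $\hat\nu$ feeds correctly into the lift of $\nu$ to $\geh$ so that the three-case bracket of root vectors matches the three-case formula produced by the twisted Jacobi identity. Most of this has been done in the preliminary section and in \cite{CMP}; what remains for the proof of Theorem~\ref{rep} is to cite those facts and then quote \cite{CLM, FLM, L}. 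I would therefore present the proof as a short verification-plus-citation rather than a self-contained computation, flagging the cocycle/automorphism compatibility as the step where all the earlier preparation is actually used.
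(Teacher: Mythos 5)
Your proposal matches the paper's treatment: Theorem~\ref{rep} is stated there as a direct quotation of \cite[Theorem 3.1]{CLM}, \cite[Theorem 3]{FLM} and \cite[Theorem 9.1]{L}, with no further proof given, the preliminary section having already supplied exactly the cocycle/automorphism compatibility checks you flag as the real content. One small correction: the fixed-point subalgebra $\geh_{(0)}$ is of type $B_l$ (as the paper states after listing the Chevalley generators), not $C_l$; this slip does not affect your identification of the twisted algebra as $A_{2l}^{(2)}$.
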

Therefore $V_L^T$ is an integrable highest weight module of highest weight $\La_0$, where $\La_0$ is the fundamental weight such that $\pair{\La_0,c}=1$ and $\pair{\La_0,\heh_{(0)}}=0=\pair{\La_0,d}$.
A highest weight vector is ${\bf 1}_T$.

\subsection{Standard module}
From Theorem \ref{rep}, we have the basic $\tilde{\geh}[\nu]$-module $L(\La_0)\simeq V_L^T$.
We also have the following formulas on $V_L^T$. 
\begin{align}
\label{ede}&e_\alpha de_\alpha^{-1}=d+\alpha-\frac{1}{8}\pair{\sum_{j=0}^3\nu^j\alpha,\alpha}c,\\
\label{ehe}&e_\alpha he_\alpha^{-1}=h-\alpha(h)c,\\
\label{ehme}&e_\alpha h(m)e_\alpha^{-1}=h(m)\text{ for }j\neq0,\\
\label{exe}&e_\alpha Y_\beta^{\hat{\nu}}(m)e_\alpha^{-1}=C(\alpha,\beta)Y_\beta^{\hat{\nu}}(m-\pair{\alpha,\beta_{(0)}}).
\end{align}

Next we consider the standard $\tilde{\geh}[\nu]$-module $L(k\La_0)$ of higher level $k$, namely the integrable highest weight $\tilde{\geh}[\nu]$-module of highest weight $k\La_0$.
Since we have $L(\La_0)\simeq V_L^T$, we can realize $L(k\La_0)$ as a submodule of the tensor product of $k$ copies of $V_L^T$ as
\[L(k\La_0)\simeq U(\tilde{\geh}[\nu])\cdot v_0\subset(V_L^T)^{\ot k},\]
where $v_0={\bf 1}_T\ot\cdots\ot{\bf 1}_T$ is a highest weight vector of $L(k\La_0)$.
The action of $\tilde{\geh}[\nu]$ on $L(k\La_0)$ is given by coproduct
\[\Delta^{(k-1)}(x)=x\ot1\ot\cdots\ot1+1\ot x\ot\cdots\ot1+\cdots+1\ot1\ot\cdots\ot x,\]
where each term has $k$ components.
It is also true for the vertex operators.
For a positive integer $n$, we set
\begin{align}\label{TVO3}
x_{n\alpha}^{\hat{\nu}}(z)=[\Delta^{(k-1)}(Y^{\hat{\nu}}(\iota(e_\alpha),z))]^n.
\end{align}
Note that $x_{(k+1)\alpha}^{\hat{\nu}}(z)=0$ because of the null identity on $V_L$ such that $e^{\Delta_z}(\iota(e_\alpha)^2)\cdot {\bf 1}=0$.
We also define a component operator $x_{n\alpha}^{\hat{\nu}}(m)$ by
\begin{align}\label{component op}
x_{n\alpha}^{\hat{\nu}}(z)=\sum_{m\in\frac{1}{4}\Z}x_{n\alpha}^{\hat{\nu}}(m)z^{-m-n\pair{\alpha,\alpha}/2}.
\end{align}

We recall the Cartan subalgebra and simple roots for twisted affine Lie algebras introduced in \cite[\S8.3]{K}.
The Cartan subalhgebra is obtained by
\[\heh_{(0)}\oplus\C c\oplus\C d.\]
Chevalley generators $h_i$ $(0\le i\leq l)$ are given as follows.
\[\begin{cases}
h_0=-\sum_{i=1}^{2l}\alpha_i\\
h_i=\alpha_i+\alpha_{2l-i+1}&\text{for }i=1,\ldots,l-1\\
h_l=2(\alpha_l+\alpha_{l+1}).
\end{cases}\]
We know that $\tilde{\geh}[\nu]$ contains a finite-dimensional simple Lie algebra $\geh_{(0)}$ (It coincides with the notation $\geh_{\bar{0}}$ used in \cite{K}).
Note that the Dynkin diagram of $\geh_{(0)}$ is of type $B_l$ .
One can take the set of simple roots of $\geh_{(0)}$ as that of the image under the projection $P_0$ to $\heh_{(0)}$.
Thus we can take it as $\{(\alpha_1)_{(0)},\ldots,(\alpha_l)_{(0)}\}$.

Set $Q=\bigoplus_{i=1}^l\Z\alpha_i\subset L$.
Note that $Q_{(0)}=P_0Q$ should be understood as the root lattice of $\tilde{\geh}[\nu]$.
Following \cite{OT}, we define the adjoint action of $e_\alpha$ on $\tilde{\geh}[\nu]$ by.
\begin{align}
\label{ece}&e_\alpha ce_\alpha^{-1}=c,\\
\label{ede2}&e_\alpha de_\alpha^{-1}=d+\alpha-\frac{1}{2}\pair{\alpha_{(0)},\alpha_{(0)}}c,\\
\label{ehe2}&e_\alpha he_\alpha^{-1}=h-\alpha(h)c\text{ for }h\in\heh_{(0)},\\
\label{ehme2}&e_\alpha h(m)e_\alpha^{-1}=h(m)\text{ for }m\neq0,\\
\label{exe2}&e_\alpha x_\beta^{\hat{\nu}}(m)e_\alpha^{-1}=C(\alpha,\beta)x_\beta^{\hat{\nu}}(m-\pair{\alpha,\beta_{(0)}}).
\end{align}
We have its action on $(V_L^T)^{\ot k}$ by $e_\alpha\mapsto e_\alpha\ot\cdots\ot e_\alpha$, and hence also on $L(k\La_0)$.
These calculations are based on (\ref{ede})-(\ref{exe}) on $V_L^T$.
This action corresponds to the translation operator of the affine Weyl group of $\tilde{\geh}[\nu]$.
See Section 1.5 of \cite{BKP} for untwisted cases.

Next lemma reveals the relation between the twisted vertex operators for positive and negative roots.
The proof is completely parallel to \cite[Theorem 5.6]{LP}, \cite[Theorem 6.4]{P} or \cite[Lemma 3]{OT}.
\begin{lemma}\label{TVO formula}
We renormalize the twisted vertex operator $x_\alpha^{\hat{\nu}}(z)$ as $\tilde{x}_\alpha^{\hat{\nu}}(z)=4\sigma(\alpha)^{-1}x_\alpha^{\hat{\nu}}(z)$ for $\alpha\in L_2$.
Then, for $p,q\geq0$ such that $p+q=k$,  we have
\begin{align}\label{TVO formula2}
\frac{1}{p!}E^-(\alpha,z)(z\tilde{x}_\alpha^{\hat{\nu}}(z))^pE^+(\alpha,z)=\frac{1}{q!}\ep_C(\alpha,-\alpha)^{-q}(z\tilde{x}_{-\alpha}^{\hat{\nu}}(z))^qe_\alpha z^{\alpha_{(0)}+\frac{k\pair{\alpha_{(0)},\alpha_{(0)}}}{2}}
\end{align}
as an operator on $(V_L^T)^{\ot k}$ or $L(k\La_0)$.
In particular, (\ref{TVO formula2}) can be rewritten as
\begin{align}\label{TVO formula3}
E^-(\alpha,z){\rm exp}(z\tilde{x}_\alpha^{\hat{\nu}}(z))E^+(\alpha,z)={\rm exp}(\ep_C(\alpha,\alpha)^{-1}z\tilde{x}_{-\alpha}^{\hat{\nu}}(z))e_\alpha z^{\alpha_{(0)}+\frac{k\pair{\alpha_{(0)},\alpha_{(0)}}}{2}}.
\end{align}
\end{lemma}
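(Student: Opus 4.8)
\emph{Proof idea.} The plan is to first settle the case $k=1$ by substituting the explicit formula (\ref{TVO2}) for the $\hat\nu$-twisted vertex operator, and then to obtain the level-$k$ statement as a purely formal consequence by expanding the coproducts $\Delta^{(k-1)}$.

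\textbf{Level one.} For $k=1$ one has $x_\alpha^{\hat\nu}(z)=Y^{\hat\nu}(\iota(e_\alpha),z)$, so (\ref{TVO2}) together with $\pair{\alpha,\alpha}=2$ (hence $2^{-\pair{\alpha,\alpha}}=\tfrac14$) and the renormalization $\tilde x_\alpha^{\hat\nu}(z)=4\sigma(\alpha)^{-1}x_\alpha^{\hat\nu}(z)$ give
\[
z\tilde x_\alpha^{\hat\nu}(z)=E^-(-\alpha,z)E^+(-\alpha,z)\,e_\alpha\, z^{\alpha_{(0)}+\pair{\alpha_{(0)},\alpha_{(0)}}/2},
\]
with $e_\alpha$ regarded in $\hat L_\nu$ through (\ref{cocycle}). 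I will use three elementary facts: $E^\pm(\beta,z)E^\pm(\gamma,z)=E^\pm(\beta+\gamma,z)$, since the relevant Heisenberg modes pairwise commute; the operators $E^\pm(\beta,z)$, which act only on the $S[\nu]$-factor, commute with $e_\gamma$ and $z^h$, which act only on the $U_T$-factor; and the rearrangement rule $z^h a=a\,z^{h+\pair{h,\overline a}}$ on $U_T$. Conjugating the displayed identity by $E^-(\alpha,z)$ and $E^+(\alpha,z)$ and collapsing $E^-(\alpha,z)E^-(-\alpha,z)=1=E^+(-\alpha,z)E^+(\alpha,z)$ yields the case $p=1$, $q=0$:
\[
E^-(\alpha,z)\bigl(z\tilde x_\alpha^{\hat\nu}(z)\bigr)E^+(\alpha,z)=e_\alpha z^{\alpha_{(0)}+\pair{\alpha_{(0)},\alpha_{(0)}}/2}.
\]
For the case $p=0$, $q=1$ I substitute the $-\alpha$ version of the displayed identity into the right-hand side of the asserted formula, move the power of $z$ past $e_\alpha$ using $z^h a=a z^{h+\pair{h,\overline a}}$ and $\pair{\alpha_{(0)},\alpha}=\pair{\alpha_{(0)},\alpha_{(0)}}$, and evaluate $e_{-\alpha}e_\alpha=\ep_C(-\alpha,\alpha)$; since $C$ is bilinear and $C(\alpha,\alpha)=1$ we have $C(\alpha,-\alpha)=1$, hence $\ep_C(\alpha,-\alpha)=\ep_C(-\alpha,\alpha)$, and the right-hand side collapses to $E^-(\alpha,z)E^+(\alpha,z)$. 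Thus
\[
E^-(\alpha,z)E^+(\alpha,z)=\ep_C(\alpha,-\alpha)^{-1}\bigl(z\tilde x_{-\alpha}^{\hat\nu}(z)\bigr)e_\alpha z^{\alpha_{(0)}+\pair{\alpha_{(0)},\alpha_{(0)}}/2}.
\]

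\textbf{Passage to level $k$.} On $(V_L^T)^{\ot k}$ the action of $\hat\heh[\nu]$ is the coproduct of the level-one action, so $\Delta^{(k-1)}(E^\pm(\alpha,z))=E^\pm(\alpha,z)^{\ot k}$, while $z\tilde x_\alpha^{\hat\nu}(z)=\sum_{i=1}^k(z\tilde x_\alpha^{\hat\nu}(z))_i$ is the sum of the level-one operators acting in the separate tensor slots. Operators in distinct slots commute, and $(z\tilde x_{\pm\alpha}^{\hat\nu}(z))_i^2=0$ by the null identity $x_{2\alpha}^{\hat\nu}(z)=0$ in one copy, so $\tfrac1{p!}(z\tilde x_\alpha^{\hat\nu}(z))^p=\sum_{|S|=p}\prod_{i\in S}(z\tilde x_\alpha^{\hat\nu}(z))_i$, the $p$-th elementary symmetric expression in the slotwise operators. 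Since everything factors over the $k$ slots, the left-hand side of (\ref{TVO formula2}) equals
\[
\sum_{|S|=p}\ \prod_{i\in S}\Bigl(E^-(\alpha,z)(z\tilde x_\alpha^{\hat\nu}(z))E^+(\alpha,z)\Bigr)_i\ \prod_{i\notin S}\Bigl(E^-(\alpha,z)E^+(\alpha,z)\Bigr)_i.
\]
Inserting the two level-one identities, each slot in $S$ contributes $e_\alpha z^{\alpha_{(0)}+\pair{\alpha_{(0)},\alpha_{(0)}}/2}$ and each of the $q=k-p$ slots outside $S$ contributes $\ep_C(\alpha,-\alpha)^{-1}(z\tilde x_{-\alpha}^{\hat\nu}(z))e_\alpha z^{\alpha_{(0)}+\pair{\alpha_{(0)},\alpha_{(0)}}/2}$. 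Using slot-commutativity once more to pull all the $z\tilde x_{-\alpha}^{\hat\nu}(z)$ factors to the left and all the $e_\alpha z^{\alpha_{(0)}+\pair{\alpha_{(0)},\alpha_{(0)}}/2}$ factors to the right, and recognizing $\sum_{|S|=p}\prod_{i\notin S}(z\tilde x_{-\alpha}^{\hat\nu}(z))_i=\tfrac1{q!}(z\tilde x_{-\alpha}^{\hat\nu}(z))^q$ together with $(e_\alpha z^{\alpha_{(0)}+\pair{\alpha_{(0)},\alpha_{(0)}}/2})^{\ot k}=e_\alpha z^{\alpha_{(0)}+k\pair{\alpha_{(0)},\alpha_{(0)}}/2}$, one obtains exactly the right-hand side of (\ref{TVO formula2}). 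Summing (\ref{TVO formula2}) over $p=0,\dots,k$ and using $x_{(k+1)\alpha}^{\hat\nu}(z)=0$ (so that $\exp(z\tilde x_\alpha^{\hat\nu}(z))$ and $\exp(\ep_C(\alpha,\alpha)^{-1}z\tilde x_{-\alpha}^{\hat\nu}(z))$ are the finite sums of the powers) gives (\ref{TVO formula3}), after rewriting the $2$-cocycle factor by bilinearity of $C$ and $C(\alpha,\alpha)=1$. Finally, $L(k\La_0)\subset(V_L^T)^{\ot k}$ is invariant under all operators involved, so the identities restrict to it.

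I expect the only real work to be bookkeeping: keeping track of the $2$-cocycle $\ep_C$, handling the commutation of the formal-variable operators $z^h$ with the group elements $e_\alpha$ inside $U_T$ (the usual source of sign errors in this kind of argument), and the accumulation of $k$ copies of the scalar $\pair{\alpha_{(0)},\alpha_{(0)}}/2$ in the final exponent. There is no conceptual obstacle: once the level-one identities are extracted from (\ref{TVO2}), the level-$k$ statement follows formally from the coproduct structure, exactly as in \cite[Theorem 5.6]{LP}, \cite[Theorem 6.4]{P} and \cite[Lemma 3]{OT}.
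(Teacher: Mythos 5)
Your proposal is correct and follows exactly the strategy the paper intends: the paper gives no proof of its own but defers to \cite[Theorem 5.6]{LP}, \cite[Theorem 6.4]{P} and \cite[Lemma 3]{OT}, all of which proceed by extracting the two level-one identities from the explicit form of the twisted vertex operator and then assembling the level-$k$ statement slotwise through the coproduct, as you do. The only point worth flagging is cosmetic: your summation over $p$ naturally produces $\ep_C(\alpha,-\alpha)^{-1}$ in the exponential, and identifying this with the $\ep_C(\alpha,\alpha)^{-1}$ written in (\ref{TVO formula3}) requires a short computation with the explicit cocycle (\ref{cocycle}) rather than just bilinearity of $C$.
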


\section{principal subspace}

Let $\Delta_+$ be the set of positive roots.
We set
\[\enu=\bigoplus_{\alpha\in\Delta_+}\C x_\alpha.\]
The algebra $\enu$ is the nilradical of the Borel subalgebra.
Consider its twisted affinization
\[\hat{\enu}[\nu]=\bigoplus_{m\in\frac{1}{4}\Z}\enu_{(4m)}\ot t^m\oplus\C c\]
and its subalgebra
\[\overline{\enu}[\nu]=\bigoplus_{m\in\frac{1}{4}\Z}\enu_{(4m)}\ot t^m.\]
In \cite{CLM, CMP, FS, FS2}, the principal subspace $W(k\La_0)$ of $L(k\La_0)$ is defined by
\[W(k\La_0)=U(\overline{\enu}[\nu])\cdot v_0.\]

\subsection{Twisted quasi-particle}
In this section, we introduce the twisted quasi-particle and its monomials.
We define the twisted quasi-particle of color $i$, charge $n$ and energy $-m$ for each simple root $\alpha_i$, $n\in\mathbb{N}$ and $m\in\frac{1}{4}\Z$ as the coefficient $x_{n\alpha_i}^{\hat{\nu}}(m)$ in (\ref{component op}).
%\[x_{n\alpha_i}^{\hat{\nu}}(z)=\sum_{m\in\frac{1}{4}\Z}x_{n\alpha_i}^{\hat{\nu}}(m)z^{-m-n}.\]

From \cite{BS}, we review twisted quasi-particle monomials.
A twisted quasi-particle monomial is defined by
\begin{align}\label{QP}
\nonumber b&=b(\alpha_l)\cdots b(\alpha_1)\\
&=x_{n_{r_l^{(1)},l}\alpha_l}^{\hat{\nu}}(m_{r_l^{(1)},l})\cdots x_{n_{1,l}\alpha_l}^{\hat{\nu}}(m_{1,l})\cdots x_{n_{r_1^{(1)},1}\alpha_1}^{\hat{\nu}}(m_{r_1^{(1)},1})\cdots x_{n_{1,1}\alpha_1}^{\hat{\nu}}(m_{1,1})
\end{align}
with $1\leq n_{r_i^{(1)},i}\leq\cdots\leq n_{1,i}$, $m_{r_i^{(1)},i}\leq\cdots\leq m_{1,i}$ for each $i$.
For (\ref{QP}), we set the sequences $\calR^\prime,\calE$ as
\[\mathcal{R}^\prime=\left(n_{r_l^{(1)},l},\ldots,n_{1,i};\cdots;n_{r_1^{(1)},1},\ldots,n_{1,1}\right),\quad\mathcal{E}=\left(m_{r_l^{(1)},l},\ldots,m_{1,1};\cdots;m_{r_1^{(1)},1},\ldots,m_{1,1}\right).\]
They are called charge-type, energy-type respectively.
For these charge-type and energy-type, we define total-charge and total-energy by
\[{\rm chg}\ b=\sum_{i=1}^l\sum_{p=1}^{r_i^{(1)}}n_{p,i},\quad{\rm en}\ b=\sum_{i=1}^l\sum_{p=1}^{r_i^{(1)}}m_{p,i}.\]
The dual-charge-type
\[\mathcal{R}=\left(r_l^{(1)},\ldots,r_l^{(s_l)};\cdots;r_1^{(1)},\ldots,r_1^{(s_1)}\right)\]
is defined in the way that $(r_i^{(1)},\ldots,r_i^{(s_i)})$ is the transposed partition of $(n_{1,i},\ldots,n_{r_i^{(1)},i})$ for each $i$.
We also define the $color$-$type$ $\calC$ by
\[\mathcal{C}=\left(r_l,\ldots,r_1\right)\]
where $r_i,n_{p,i},r_i^{(t)}$ are related as
\begin{align}\label{color}
r_i=\sum_{p=1}^{r_i^{(1)}}n_{p,i}=\sum_{t=1}^{s_i}r_i^{(t)}.
\end{align}
We denote the set of all quasi-particle monomials of the form (\ref{QP}) by $M_{QP}$.
In this definition, $r_i^{(s)}$ stands for the number of quasi-particles of color $i$ and charge greater then or equal to $s$ in the monomial $b$.
Remark that $r_i^{(s)}=0$ for $k<s$ since we have $x_{(k+1)\alpha}^{\hat{\nu}}(z)=0$ on $L(k\La_0)$ for $\alpha\in L$.
The charge-type and the dual-charge-type of (\ref{QP}) can be replaced by the $l$-tuple of Young diagrams, so that $i$-th diagram correspond to $b(\alpha_i)$.
That is, the $i$-th diagram has $r_i^{(s)}$ boxes in the $s$-th row and $n_{p,i}$ boxes in the $p$-th column, where rows are counted from the top and columns are counted from the left.
See Table \ref{Young}.
Conversely, we are able to recover twisted quasi-particle monomial with charge-type $\mathcal{R}^\prime$ from such Young diagrams.

\begin{table}[h]
\begin{picture}(400,150)(-20,0)
\put(100,20){\line(0,1){95}}
\put(100,115){\line(1,0){159}}
\put(100,20){\line(1,0){59}}
\put(118,20){\line(0,1){95}}
\put(141,20){\line(0,1){95}}
\put(159,20){\line(0,1){95}}
\multiput(100,38)(0,18){2}{\line(1,0){59}}
\put(100,79){\line(1,0){100}}
\put(100,97){\line(1,0){159}}
\put(182,79){\line(0,1){36}}
\put(200,79){\line(0,1){36}}
\put(218,97){\line(0,1){18}}
\multiput(241,97)(18,0){2}{\line(0,1){18}}
\multiput(162,59)(4,4){5}{\circle*{0.7}}
\multiput(123.3,29)(6.2,0){3}{\circle*{0.6}}
\multiput(123.3,47)(6.2,0){3}{\circle*{0.6}}
\multiput(123.3,67.5)(6.2,0){3}{\circle*{0.6}}
\multiput(123.3,88)(6.2,0){3}{\circle*{0.6}}
\multiput(123.3,106)(6.2,0){3}{\circle*{0.6}}
\multiput(164.3,88)(6.2,0){3}{\circle*{0.6}}
\multiput(164.3,106)(6.2,0){3}{\circle*{0.6}}
\multiput(223.5,106)(6.2,0){3}{\circle*{0.6}}
\multiput(109,61.3)(0,6.2){3}{\circle*{0.6}}
\multiput(150,61.3)(0,6.2){3}{\circle*{0.6}}
\put(85,35){\makebox(0,0)[t]{$r_i^{(s_i)}$}}
\put(85,53){\makebox(0,0)[t]{$r_i^{(s_i-1)}$}}
\put(85,94){\makebox(0,0)[t]{$r_i^{(2)}$}}
\put(85,112){\makebox(0,0)[t]{$r_i^{(1)}$}}
\put(109,130){\makebox(0,0)[t]{$n_{1,i}$}}
\put(150,130){\makebox(0,0)[t]{$n_{r_i^{(s_i)},i}$}}
\put(191,130){\makebox(0,0)[t]{$n_{r_i^{(2)},i}$}}
\put(250,130){\makebox(0,0)[t]{$n_{r_i^{(1)},i}$}}
\end{picture}
\caption{Young diagram for $b(\alpha_i)$ with $s_i=n_{1,i}$\label{Young}}
\end{table}

For two charge-types $\mathcal{R}^\prime$ and $\overline{\mathcal{R}^\prime}=\left(\overline{n}_{\overline{r}_l^{(1)},l},\ldots,\overline{n}_{1,1}\right)$, we write $\mathcal{R}^\prime<\overline{\mathcal{R}^\prime}$ if there exists $i$ and $s$ such that $r_j^{(1)}=\bar{r}_j^{(1)},n_{t,j}=\bar{n}_{t,j}$ for $j<i,1\leq t\leq r_j^{(t)}$ and $n_{1,i}=\bar{n}_{1,i},n_{2,i}=\bar{n}_{2,1},\ldots,n_{s-1,i}=\bar{n}_{s-1,i},n_{s,i}<\bar{n}_{s,i}$ or $n_{t,i}=\bar{n}_{t,i}$ for $1\leq t\leq r_i^{(1)},r_i^{(1)}<\bar{r}_i^{(1)}$.
We apply this order $"<"$ to energy-types and color-types.
For two monomials $b$ and $\overline{b}$ with charge-types $\mathcal{R}^\prime$ and $\overline{\mathcal{R}^\prime}$, energy-type $\mathcal{E}$ and $\overline{\mathcal{E}^\prime}$, respectively, we write $b<\overline{b}$ if
\begin{enumerate}
\item$\mathcal{R}^\prime<\overline{\mathcal{R}^\prime}\text{ or}$\\
\item$\mathcal{R}^\prime=\overline{\mathcal{R}^\prime}\text{ and }\mathcal{E}<\overline{\mathcal{E}^\prime}$.
\end{enumerate}
By \cite{BKP, BS}, we have the linear order $"<"$ on the set of monomials in $M_{QP}$.

Next, we consider relations among twisted quasi-particles which give the basis conditions that will be described later.
The following lemma is proved in the same way as \cite[Lemma 3.2]{CLM} and is a special case of \cite[Lemma 2.1, Lemma 2.2]{CMP}.

\begin{lemma}\label{component rel1}
{\phantom{a}}
\begin{enumerate}
\renewcommand{\labelenumi}{\roman{enumi}}
\item[(i)]$Y_{\alpha_i}^{\hat{\nu}}(m)=0$\quad {\rm for} $i\notin\{l,l+1\}$ {\rm and} $m\in\frac{1}{4}+\frac{1}{2}\Z$\\
\item[(ii)]$Y_{\alpha_l}^{\hat{\nu}}(m)=Y_{\alpha_{l+1}}^{\hat{\nu}}(m)=0$\quad {\rm for} $m\in\frac{1}{2}\Z$\\
\item[(iii)]$Y_{\alpha_{2l-i+1}}^{\hat{\nu}}(m)=(-1)^{2m+1}Y_{\alpha_i}^{\hat{\nu}}(m)$\quad {\rm for} $i\notin\{l,l+1\}$ {\rm and} $m\in\frac{1}{2}\Z$\\
\item[(iv)]$Y_{\alpha_{l+1}}^{\hat{\nu}}(m)=\zeta^{4m-1}Y_{\alpha_l}^{\hat{\nu}}(m)$\quad {\rm for} $m\in\frac{1}{4}+\frac{1}{2}\Z$
\end{enumerate}
\end{lemma}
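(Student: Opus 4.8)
The plan is to derive all four identities from the single symmetry relation (\ref{TVO rel}), applied to the vectors $\iota(e_{\alpha_i})$ for the simple roots $\alpha_i$. First I would expand both sides of (\ref{TVO rel}) with $v=\iota(e_{\alpha_i})$. For the left-hand side, since $\hat{\nu}^j\iota(e_{\alpha_i})=\varphi(j,\alpha_i)\iota(e_{\nu^j\alpha_i})$ and $\pair{\alpha_i,\alpha_i}=\pair{\nu^j\alpha_i,\nu^j\alpha_i}=2$, one obtains $\varphi(j,\alpha_i)\sum_{m\in\frac{1}{4}\Z}Y_{\nu^j\alpha_i}^{\hat{\nu}}(m)z^{-m-1}$. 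For the right-hand side, writing $w=z^{1/4}$ so that $z^{-m-1}=w^{-4m-4}$, the substitution $w\mapsto\zeta^{-j}w$ multiplies this monomial by $\zeta^{j(4m+4)}=\zeta^{4jm}$, using $\zeta^{4j}=1$ and the fact that $4m\in\Z$ even though $m$ is only a quarter-integer. Comparing the coefficients of $z^{-m-1}$ then gives the master identity
\[
\varphi(j,\alpha_i)\,Y_{\nu^j\alpha_i}^{\hat{\nu}}(m)=\zeta^{4jm}\,Y_{\alpha_i}^{\hat{\nu}}(m)\qquad(m\in\tfrac{1}{4}\Z,\ j\in\Z).
\]

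Next I would record two elementary facts. From $\nu(\alpha_i)=\alpha_{2l-i+1}$ one has $\nu^2\alpha_i=\alpha_i$ for every simple root, and $\nu$ interchanges $\alpha_l$ and $\alpha_{l+1}$. From Proposition \ref{CMP prop}, evaluated on the length-one strings $\alpha_i=\alpha_i^{(1)}$, one has $\varphi(j,\alpha_i)=(-1)^j$ if $i\notin\{l,l+1\}$ and $\varphi(j,\alpha_l)=\varphi(j,\alpha_{l+1})=\zeta^j$.

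Then (i) and (ii) follow by taking $j=2$ in the master identity: since $\nu^2\alpha_i=\alpha_i$, it reads $\varphi(2,\alpha_i)Y_{\alpha_i}^{\hat{\nu}}(m)=\zeta^{8m}Y_{\alpha_i}^{\hat{\nu}}(m)$. For $i\notin\{l,l+1\}$, $\varphi(2,\alpha_i)=1$, whereas $\zeta^{8m}=-1$ when $m\in\frac{1}{4}+\frac{1}{2}\Z$ (then $8m\in2+4\Z$), forcing $Y_{\alpha_i}^{\hat{\nu}}(m)=0$. For $i\in\{l,l+1\}$, $\varphi(2,\alpha_i)=\zeta^2=-1$, whereas $\zeta^{8m}=1$ when $m\in\frac{1}{2}\Z$ (then $8m\in4\Z$), again forcing the vanishing. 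Similarly (iii) and (iv) follow by taking $j=1$, so the identity reads $\varphi(1,\alpha_i)Y_{\alpha_{2l-i+1}}^{\hat{\nu}}(m)=\zeta^{4m}Y_{\alpha_i}^{\hat{\nu}}(m)$. For $i\notin\{l,l+1\}$, $\varphi(1,\alpha_i)=-1$ and $\zeta^{4m}=(-1)^{2m}$ for $m\in\frac{1}{2}\Z$, giving $Y_{\alpha_{2l-i+1}}^{\hat{\nu}}(m)=(-1)^{2m+1}Y_{\alpha_i}^{\hat{\nu}}(m)$; for $i=l$, $\nu\alpha_l=\alpha_{l+1}$ and $\varphi(1,\alpha_l)=\zeta$, giving $Y_{\alpha_{l+1}}^{\hat{\nu}}(m)=\zeta^{4m-1}Y_{\alpha_l}^{\hat{\nu}}(m)$.

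I do not expect a serious obstacle: this is essentially the content of \cite[Lemma 3.2]{CLM} and \cite[Lemma 2.1, Lemma 2.2]{CMP} transported to our normalization. The only step requiring genuine care is the derivation of the master identity, where one must correctly keep track of the shift $\pair{\alpha_i,\alpha_i}/2=1$ in the component-operator expansion and of how the fractional power $z^{-m-1}$ transforms under $z^{1/4}\mapsto\zeta^{-j}z^{1/4}$, noting that $4m\in\Z$ so that $\zeta^{4jm}$ is a well-defined fourth root of unity; once this identity is available, each of (i)--(iv) reduces to substituting $j\in\{1,2\}$ and an elementary congruence check on $m$.
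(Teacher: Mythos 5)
Your proposal is correct and takes essentially the same route as the paper: both derive the four identities from the relation (\ref{TVO rel}) with $v=\iota(e_{\alpha_i})$ and $j=1,2$, using Proposition \ref{CMP prop} (equivalently the values of $\varphi(j,\alpha_i)$) and then comparing coefficients of $z^{-m-1}$. Your ``master identity'' $\varphi(j,\alpha_i)Y_{\nu^j\alpha_i}^{\hat{\nu}}(m)=\zeta^{4jm}Y_{\alpha_i}^{\hat{\nu}}(m)$ is just a uniform packaging of the case-by-case computation the paper carries out, and all the sign and root-of-unity bookkeeping checks out.
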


\begin{proof}
We show that (i) holds for $\alpha_i$ with $i\notin\{l,l+1\}$.
By taking $j=2$ and $v=\iota(e_{\alpha_i})$ in (\ref{TVO rel}) and using Proposition \ref{CMP prop}, we have
\begin{align}\label{TVO eq}
Y^{\hat{\nu}}(\iota(e_{\alpha_i}),z)=Y^{\hat{\nu}}(\iota(e_{\alpha_i}),z)\mid_{z^\frac{1}{4}\rightarrow-z^\frac{1}{4}}.
\end{align}
As the component operator, we have
\begin{align}\label{com eq}
\sum_{m\in\frac{1}{4}\Z}Y_{\alpha_i}^{\hat{\nu}}(m)z^{-m-1}=\sum_{m\in\frac{1}{4}\Z}(-1)^{4m}Y_{\alpha_i}^{\hat{\nu}}(m)z^{-m-1}.
\end{align}
Thus we obtain the statement by taking the residue of the both sides of (\ref{com eq}).
(ii) is obtained from the same argument.

Then we take $j=1$ and $v=\iota(e_{\alpha_i})$ $(i\neq l,l+1)$ in (\ref{TVO rel}).
From Proposition \ref{CMP prop}, we have
\[Y^{\hat{\nu}}(-\iota(e_{\alpha_{2l-i+1}}),z)=Y^{\hat{\nu}}(\iota(e_{\alpha_i}),z)\mid_{z^\frac{1}{4}\rightarrow-\zeta z^\frac{1}{4}}.\]
Thus we obtain
\[-\sum_{m\in\frac{1}{4}\Z}Y_{\alpha_{2l-i+1}}^{\hat{\nu}}(m)z^{-m-1}=\sum_{m\in\frac{1}{4}\Z}\zeta^{4m}Y_{\alpha_i}^{\hat{\nu}}(m)z^{-m-1}.\]
This implies (iii).
We are able to prove (iv) in the same argument by taking $j=1$ and $v=\iota(e_{\alpha_l})$ in (\ref{TVO rel}).
\end{proof}

For every simple root $\alpha_i$, consider the one-dimensional subalgebra of $\geh$
\[\enu_{\alpha_i}=\C x_{\alpha_i},\]
and its affinizations
\[\overline{\enu}_{\alpha_i}[\nu]=\bigoplus_{m\in\frac{1}{4}\Z}{(\enu_{\alpha_i})}_{(4m)}\ot t^m.\]
Set
\[U=U(\bar{\enu}_{\alpha_l}[\nu])\cdots U(\bar{\enu}_{\alpha_1}[\nu])\]
to be the subspace of $U(\tilde{\geh}[\nu])$.
From Lemma \ref{component rel1}, we can prove the following lemma by arguing as in \cite[Lemma 3.1]{G}.

\begin{lemma}\label{ordered set}
$W(k\La_0)=U(\bar{\enu}[\nu])\cdot v_0=U\cdot v_0$
\end{lemma}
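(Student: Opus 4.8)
The plan is to establish the two equalities $W(k\La_0) = U(\bar\enu[\nu])\cdot v_0$ and $U(\bar\enu[\nu])\cdot v_0 = U\cdot v_0$ separately. The first is essentially the definition together with the observation that the positive-root part of $\geh$ generates, under $\nu$-eigenspace decomposition and affinization, the same enveloping algebra action on $v_0$; the content is really in the second equality. To prove $U(\bar\enu[\nu])\cdot v_0 = U\cdot v_0$, I would argue that every monomial in $U(\bar\enu[\nu])$ applied to $v_0$ can be rewritten, modulo terms that already lie in $U\cdot v_0$, as a monomial in which the only Fourier coefficients that appear are those of the twisted vertex operators $x_{\alpha_i}^{\hat\nu}(m)$ attached to the \emph{simple} roots $\alpha_i$ with $1\le i\le l$, arranged in the prescribed color order $b(\alpha_l)\cdots b(\alpha_1)$.

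The key steps, in order, are as follows. First, I would use Lemma \ref{component rel1}: parts (iii) and (iv) identify, up to scalars and shifts of the index $m$, the Fourier coefficients $Y^{\hat\nu}_{\alpha_{2l-i+1}}(m)$ with $Y^{\hat\nu}_{\alpha_i}(m)$ and $Y^{\hat\nu}_{\alpha_{l+1}}(m)$ with $Y^{\hat\nu}_{\alpha_l}(m)$; parts (i) and (ii) kill the coefficients with indices in the ``wrong'' cosets of $\frac14\Z$. Hence on $V_L^T$ (and on each tensor factor of $(V_L^T)^{\ot k}$, so on $L(k\La_0)$) the operators coming from the root vectors spanning $\geh_{(j)}$ reduce to operators built solely from the $Y^{\hat\nu}_{\alpha_i}$ with $i\in\{1,\dots,l\}$. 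This reduces the generating set from all of $\bar\enu[\nu]$ to the subalgebra generated by the twisted affinizations $\bar\enu_{\alpha_i}[\nu]$, $i=1,\dots,l$.

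Second, I would invoke the commutator formula for twisted vertex operators coming from the twisted Jacobi identity (\ref{Jacobi}): the commutator of two components $x^{\hat\nu}_{\alpha_i}(m)$ and $x^{\hat\nu}_{\alpha_j}(n)$ is a (possibly infinite, but locally finite on any fixed vector) linear combination of components of $Y^{\hat\nu}(Y(\hat\nu^t\iota(e_{\alpha_i}),z_0)\iota(e_{\alpha_j}),z_2)$; since $\alpha_i+\alpha_j$ is either not a root or is the root $\alpha_i^{(2)}$-type element contained already in the span of the simple-root operators up to the relations of Lemma \ref{component rel1}, and since $\pair{\alpha_i,\alpha_j}\in\{0,-1\}$, these commutators again produce only operators of the allowed form. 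This lets me reorder any product so that all color-$l$ operators stand to the left, then all color-$(l-1)$, and so on down to color $1$, exactly matching the factorization $b=b(\alpha_l)\cdots b(\alpha_1)$ in (\ref{QP}); the reordering introduces only lower terms already in $U\cdot v_0$. The reverse inclusion $U\cdot v_0\subseteq U(\bar\enu[\nu])\cdot v_0$ is immediate since $U\subseteq U(\bar\enu[\nu])$.

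The main obstacle I expect is bookkeeping in the commutator step: one must check that no ``new'' colors or charges are created when commuting operators past one another — i.e. that the right-hand side of the Jacobi identity, after extracting components, never produces a Fourier coefficient that cannot be expressed through the $x^{\hat\nu}_{\alpha_i}(m)$ with $1\le i\le l$ modulo the relations of Lemma \ref{component rel1} — and that the process terminates, which follows because each reordering strictly decreases the monomial in the linear order ``$<$'' defined just above the lemma, and that order is well-founded on $M_{QP}$. This is exactly the structure of the argument in \cite[Lemma 3.1]{G}, and I would follow it, substituting the twisted relations of Lemma \ref{component rel1} and the twisted Jacobi identity (\ref{Jacobi}) for their untwisted counterparts.
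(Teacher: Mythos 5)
Your proposal matches the paper's proof: the first equality is the definition, the reduction to the simple-root subalgebras $\bar{\enu}_{\alpha_i}[\nu]$ ($1\leq i\leq l$) comes from Lemma \ref{component rel1} (together with the fact that $\enu$ is generated by simple root vectors), and the reordering into $U=U(\bar{\enu}_{\alpha_l}[\nu])\cdots U(\bar{\enu}_{\alpha_1}[\nu])$ is carried out exactly as in \cite[Lemma 3.1]{G} via the twisted commutator formula. This is the same two-step argument the paper gives, only spelled out in more detail.
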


\begin{proof}
Lemma \ref{component rel1} implies that the subalgebras $U(\bar{\enu}_{\alpha_1}[\nu]),\ldots,U(\bar{\enu}_{\alpha_l}[\nu])$ span the principal subspace $W(k\La_0)$.
Then the statement that the ordered set $U$ spans $W(k\La_0)$ is showed as in the proof of \cite[Lemma 3.1]{G}.
\end{proof}

From the definition of the twisted vertex operators on $L(k\La_0)$, we have the following lemma. 

\begin{lemma}\label{TVO rel2}
For fixed $1\leq n_2\leq n_1$, $N=0,1\ldots,2n_2-1$ and $i=1,\ldots,l$, we have
\[\left(\left(\frac{d}{dz}\right)^Nx_{n_2\alpha_i}^{\hat{\nu}}(z)\right)x_{n_1\alpha_i}^{\hat{\nu}}(z)=A_N(z)x_{(n_1+1)\alpha_i}^{\hat{\nu}}(z)+B_N(z)\frac{d}{dz}x_{(n_1+1)\alpha_i}^{\hat{\nu}}(z)\]
where $A_N(z)$ and $B_N(z)$ are some formal series with coefficients in the set of quasi-particle polynomials.
\end{lemma}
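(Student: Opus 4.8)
The plan is to reduce the identity to a single relation coming from the twisted Jacobi identity for the vertex operator $Y^{\hat{\nu}}(\iota(e_{\alpha_i}),z)$, combined with the null-vector relation $x_{(k+1)\alpha_i}^{\hat{\nu}}(z)=0$. First I would recall that on $L(k\La_0)\subset (V_L^T)^{\ot k}$ the generating series $x_{n\alpha_i}^{\hat{\nu}}(z)$ is, up to the renormalization of Lemma \ref{TVO formula}, an $n$-th power of a single coproduct-type twisted vertex operator; concretely $x_{n\alpha_i}^{\hat{\nu}}(z)=[\Delta^{(k-1)}(Y^{\hat{\nu}}(\iota(e_{\alpha_i}),z))]^n$ by (\ref{TVO3}). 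The key structural input is that two such series at the \emph{same} point $z$ satisfy a closed product expansion: for charges $n_1,n_2$ one has, from the associativity/locality of twisted vertex operators applied at coincident arguments, an identity of the shape
\begin{align*}
x_{n_2\alpha_i}^{\hat{\nu}}(z)\,x_{n_1\alpha_i}^{\hat{\nu}}(z)=f(z)\,x_{(n_1+n_2)\alpha_i}^{\hat{\nu}}(z)
\end{align*}
for a quasi-particle-polynomial-valued series $f(z)$ — this is the twisted, higher-level analogue of the product formulas in \cite[\S3]{CLM} and \cite[Lemma 3.2]{CMP}, and is where the hypothesis $n_2\le n_1$ and the null relations enter: the product of more than $k$ copies of $\iota(e_{\alpha_i})$ vanishes, so after expanding $\Delta^{(k-1)}$ only terms with at most $k$ tensor factors carrying a vertex operator survive, and these reassemble into $x_{(n_1+n_2)\alpha_i}^{\hat{\nu}}(z)$ times a correction coming from the $E^\pm$-factors via (\ref{Ecom}).

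Next I would differentiate. Applying $(d/dz)^N$ to the product formula and using the Leibniz rule, $\left(\left(\frac{d}{dz}\right)^N x_{n_2\alpha_i}^{\hat{\nu}}(z)\right)x_{n_1\alpha_i}^{\hat{\nu}}(z)$ differs from $\left(\frac{d}{dz}\right)^N\!\left(x_{n_2\alpha_i}^{\hat{\nu}}(z)\,x_{n_1\alpha_i}^{\hat{\nu}}(z)\right)$ by terms of the form $\left(\left(\frac{d}{dz}\right)^{a} x_{n_2\alpha_i}^{\hat{\nu}}(z)\right)\left(\left(\frac{d}{dz}\right)^{b} x_{n_1\alpha_i}^{\hat{\nu}}(z)\right)$ with $a<N$, $b\ge 1$. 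Here I would argue by induction on $N$: each such lower-order mixed term, by the case $n_1\mapsto$ (lower charge) of the lemma itself (or directly by the product formula applied to a differentiated factor), is again of the desired form $A(z)x_{(n_1+1)\alpha_i}^{\hat{\nu}}(z)+B(z)\frac{d}{dz}x_{(n_1+1)\alpha_i}^{\hat{\nu}}(z)$. The reason only $x_{(n_1+1)\alpha_i}^{\hat{\nu}}(z)$ and its first derivative appear — rather than higher derivatives — is the constraint $N\le 2n_2-1$: the series $f(z)$ in the product formula has a zero of order controlled by $\pair{\alpha_i,\alpha_i}n_2 = 2n_2$ at the relevant singularity (cf.\ the exponents in (\ref{component op}) and (\ref{Ecom})), so at most $2n_2-1$ derivatives can hit the product before one is forced past that order; equivalently, $f(z)$ and the $N$-fold derivative of the product together span only a two-dimensional space over the quasi-particle polynomials spanned by $x_{(n_1+1)\alpha_i}^{\hat{\nu}}(z)$ and $\frac{d}{dz}x_{(n_1+1)\alpha_i}^{\hat{\nu}}(z)$, by the $L^{\hat{\nu}}(-1)$-derivative property $\frac{d}{dz}Y^{\hat{\nu}}(v,z)=Y^{\hat{\nu}}(L^{\hat{\nu}}(-1)v,z)$.

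The main obstacle I anticipate is establishing the basic product formula $x_{n_2\alpha_i}^{\hat{\nu}}(z)x_{n_1\alpha_i}^{\hat{\nu}}(z)=f(z)x_{(n_1+n_2)\alpha_i}^{\hat{\nu}}(z)$ cleanly in the twisted, level-$k$ setting, since one must track the cocycle factors $\ep_C(\alpha_i,\alpha_i)$, the $\sigma(\alpha_i)$ normalizations of Lemma \ref{TVO formula}, and the four-fold product $\prod_{j=0}^3(1-\zeta^j w^{1/4}/z^{1/4})^{\pair{\nu^j\alpha_i,\beta}}$ from (\ref{Ecom}) when the two arguments collide — whereas $\alpha_i$ is not $\nu$-fixed, so $\pair{\nu^j\alpha_i,\alpha_i}$ is genuinely $j$-dependent and one should invoke Lemma \ref{component rel1} to control which fractional-energy components are nonzero. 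Once that formula is in hand, the differentiation/induction bookkeeping is routine, and $A_N(z),B_N(z)$ are obtained explicitly as polynomials in the coefficients of $f$ and its derivatives; I would present them only schematically, emphasizing that they are quasi-particle polynomials as required and do not depend on the level beyond the vanishing $x_{(k+1)\alpha_i}^{\hat{\nu}}(z)=0$.
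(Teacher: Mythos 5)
Your starting point is correct --- by (\ref{TVO3}) the series $x_{n\alpha_i}^{\hat{\nu}}(z)$ is literally the $n$-th power of $x_{\alpha_i}^{\hat{\nu}}(z)=\Delta^{(k-1)}(Y^{\hat{\nu}}(\iota(e_{\alpha_i}),z))$ --- but the argument then goes astray. There is no correction factor $f(z)$ to establish: at coincident arguments the ``product formula'' is the tautology $x_{n_2\alpha_i}^{\hat{\nu}}(z)\,x_{n_1\alpha_i}^{\hat{\nu}}(z)=x_{(n_1+n_2)\alpha_i}^{\hat{\nu}}(z)$, both sides being powers of the same series; no appeal to locality, to (\ref{Ecom}), or to the cocycles is needed, since the factors $\prod_j(1-\zeta^j w^{1/4}/z^{1/4})^{\langle\nu^j\alpha_i,\alpha_i\rangle}$ arise only when one normally orders products at \emph{distinct} points. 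More seriously, your explanation of the bound $N\le 2n_2-1$ (a ``zero of order $2n_2$'' of $f$) is not the actual mechanism and cannot be, since $f\equiv 1$. The real reason only $x_{(n_1+1)\alpha_i}^{\hat\nu}(z)$ and its first derivative occur is a pigeonhole count in the multinomial Leibniz expansion
\[
\Bigl(\tfrac{d}{dz}\Bigr)^{N}x_{n_2\alpha_i}^{\hat{\nu}}(z)=\sum_{k_1+\cdots+k_{n_2}=N}\binom{N}{k_1\cdots k_{n_2}}\Bigl(\tfrac{d}{dz}\Bigr)^{k_1}x_{\alpha_i}^{\hat{\nu}}(z)\cdots\Bigl(\tfrac{d}{dz}\Bigr)^{k_{n_2}}x_{\alpha_i}^{\hat{\nu}}(z):
\]
if $N\le 2n_2-1$, every summand has some $k_j\le 1$, and that single factor, multiplied into $x_{n_1\alpha_i}^{\hat{\nu}}(z)$, yields $x_{(n_1+1)\alpha_i}^{\hat{\nu}}(z)$ when $k_j=0$, or $\frac{1}{n_1+1}\frac{d}{dz}x_{(n_1+1)\alpha_i}^{\hat{\nu}}(z)$ when $k_j=1$ via $(n_1+1)\bigl(\frac{d}{dz}x_{\alpha_i}^{\hat{\nu}}(z)\bigr)x_{n_1\alpha_i}^{\hat{\nu}}(z)=\frac{d}{dz}x_{(n_1+1)\alpha_i}^{\hat{\nu}}(z)$; the remaining $n_2-1$ differentiated factors constitute $A_N(z)$ or $B_N(z)$. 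This is the paper's (direct, non-inductive) proof, and it is the step your proposal is missing.

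Your proposed induction on $N$ also does not close: the ``lower-order mixed terms'' $\bigl(\frac{d}{dz}\bigr)^a x_{n_2\alpha_i}^{\hat\nu}(z)\bigl(\frac{d}{dz}\bigr)^b x_{n_1\alpha_i}^{\hat\nu}(z)$ with $b\ge 1$ are not instances of the lemma (which carries no derivative on the second factor), so the inductive hypothesis does not apply to them, and handling them again requires exactly the pigeonhole observation above. Finally, neither the hypothesis $n_2\le n_1$ nor the null relation $x_{(k+1)\alpha_i}^{\hat\nu}(z)=0$ plays any role in this lemma; they matter only downstream, in Lemma \ref{component rel2} and the spanning argument.
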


\begin{proof}
The relation (\ref{TVO3}) implies that we have
\begin{align}\label{TVO eq2}
(n+1)\left(\frac{d}{dz}x_{\alpha_i}^{\hat{\nu}}(z)\right)x_{n\alpha_i}^{\hat{\nu}}(z)=\frac{d}{dz}x_{(n+1)\alpha_i}^{\hat{\nu}}(z).
\end{align}
By using the Leibniz rule and (\ref{TVO3}), we have
\[\left(\frac{d}{dz}\right)^Nx_{n_2\alpha_i}(z)=\sum_{\substack{k_1+\cdots+k_{n_2}=N\\
0\leq k_1,\ldots,k_{n_2}}}\binom{N}{k_1\cdots k_{n_2}}\left(\frac{d}{dz}\right)^{k_1}x_{\alpha_i}^{\hat{\nu}}(z)\cdots\left(\frac{d}{dz}\right)^{k_{n_2}}x_{\alpha_i}^{\hat{\nu}}(z).\]
For $N=0,1,\ldots,2n_2-1$, we obtain the statement by combining the above formula, (\ref{TVO3}) and (\ref{TVO eq2}).
\end{proof}

We set
\begin{align}\label{rho}
\rho_i=\frac{1}{2}\pair{(\alpha_i)_{(0)},(\alpha_i)_{(0)}}.
\end{align}
By combining Lemma \ref{component rel1}, \ref{TVO rel2}, the next lemma can be proved in the same way as Lemma 4.1 in \cite{BS}. 

\begin{lemma}\label{component rel2}
Let $1\leq n_2\leq n_1$ be fixed.
For $i=1,\ldots,l$ and fixed $j\in \rho_in_2+\frac{1}{2}\Z,M\in\rho_i(n_1+n_2)+\frac{1}{2}\Z$, the $4\rho_in_2$ monomials from the set
\[A=\{x_{n_2\alpha_i}^{\hat{\nu}}(j)x_{n_1\alpha_i}^{\hat{\nu}}(M-j),x_{n_2\alpha_i}^{\hat{\nu}}(j-\frac{1}{2})x_{n_1\alpha_i}^{\hat{\nu}}(M-j+\frac{1}{2}),\ldots\]
\[\ldots,x_{n_2\alpha_i}^{\hat{\nu}}(j-\frac{4\rho_in_2-1}{2})x_{n_1\alpha_i}^{\hat{\nu}}(M-j+\frac{4\rho_in_2-1}{2})\}\]
can be expressed as a linear combination of monomials from the set
\[\left\{x_{n_2\alpha_i}^{\hat{\nu}}(s)x_{n_1\alpha_i}^{\hat{\nu}}(t)\mid s+t=M\right\}\setminus A\]
and monomials which have as a factor the quasi-particle $x_{(n_1+1)\alpha_i}^{\hat{\nu}}(j^\prime)$ for $j^\prime\in\rho_i(n_1+1)+\frac{1}{2}\Z$.\\
\end{lemma}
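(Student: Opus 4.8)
The plan is to mimic the proof of \cite[Lemma 4.1]{BS}, adapting it to our period-4 twisted setting where the relevant Fourier gradings live in $\frac14\Z$ but the shift produced by products of quasi-particles of a single color $i$ is controlled by the integer $2\rho_in_2$ (recall $\rho_i=\frac12\pair{(\alpha_i)_{(0)},(\alpha_i)_{(0)}}$, which is the relevant power of $z$ in the leading exponent of $x_{n_2\alpha_i}^{\hat\nu}(z)x_{n_1\alpha_i}^{\hat\nu}(z)$). First I would write out the generating-function identity coming from Lemma~\ref{TVO rel2}: for each $N=0,1,\ldots,2n_2-1$ we have
\[
\left(\left(\tfrac{d}{dz}\right)^Nx_{n_2\alpha_i}^{\hat{\nu}}(z)\right)x_{n_1\alpha_i}^{\hat{\nu}}(z)=A_N(z)x_{(n_1+1)\alpha_i}^{\hat{\nu}}(z)+B_N(z)\tfrac{d}{dz}x_{(n_1+1)\alpha_i}^{\hat{\nu}}(z),
\]
and then extract the coefficient of a fixed power of $z$ determined by $M$. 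The left-hand side, after expanding $(d/dz)^N$ and using that $x_{n_2\alpha_i}^{\hat\nu}(z)=\sum_s x_{n_2\alpha_i}^{\hat\nu}(s)z^{-s-n_2\rho_i}$, becomes (up to nonzero scalar) $\sum_{s+t=M}\binom{-s-n_2\rho_i}{N}_{\!*}x_{n_2\alpha_i}^{\hat\nu}(s)x_{n_1\alpha_i}^{\hat\nu}(t)$ where the symbol stands for the falling-factorial polynomial of degree $N$ in $s$; the right-hand side is a polynomial in the quasi-particles $x_{(n_1+1)\alpha_i}^{\hat\nu}(j')$ with $j'\in\rho_i(n_1+1)+\frac12\Z$ together with lower-color factors.

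Next I would set up the linear-algebra core. Fix the "window" $A$ of $4\rho_in_2$ consecutive monomials $x_{n_2\alpha_i}^{\hat\nu}(j-\tfrac{m}{2})x_{n_1\alpha_i}^{\hat\nu}(M-j+\tfrac{m}{2})$ for $m=0,1,\ldots,4\rho_in_2-1$. By Lemma~\ref{component rel1} (parts (i),(ii) for $i\notin\{l,l+1\}$ in the guise of vanishing of half-integral or integral modes) together with the structure of $\geh_{(0)}$ of type $B_l$, the modes $s$ that actually occur in $x_{n_2\alpha_i}^{\hat\nu}(s)$ for a \emph{single} color run over a coset of $\frac12\Z$ of size controlled by $2\rho_in_2$; pairing with $x_{n_1\alpha_i}^{\hat\nu}(M-s)$ doubles this to $4\rho_in_2$. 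The $N=0,1,\ldots,4\rho_in_2-1$ relations above (note: we may also use the derivative identity (\ref{TVO eq2}) to promote the range of usable $N$ from $2n_2-1$ up to $4\rho_in_2-1$, exactly as in \cite{BS}) give $4\rho_in_2$ linear equations; restricting each to the window $A$ yields a square $4\rho_in_2\times4\rho_in_2$ system whose coefficient matrix has entries that are the falling-factorial values $\left(-s-n_2\rho_i\right)_{(N)}$ evaluated at the $4\rho_in_2$ distinct arguments $s=j-\tfrac{m}{2}$. That matrix is (row-equivalent to) a Vandermonde-type matrix in these distinct nodes, hence invertible. Inverting, each monomial in $A$ is expressed as a linear combination of the monomials $x_{n_2\alpha_i}^{\hat\nu}(s)x_{n_1\alpha_i}^{\hat\nu}(t)$ with $s+t=M$ lying \emph{outside} $A$ plus monomials having a factor $x_{(n_1+1)\alpha_i}^{\hat\nu}(j')$, which is exactly the claim.

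The main obstacle is the bookkeeping of which Fourier modes are nonzero for a single-color product and hence the precise count $4\rho_in_2$: one must check, using Lemma~\ref{component rel1} and the explicit $\heh_{(0)}$-grading (\ref{grad}) together with $L^{\hat\nu}(0)$-weights, that the modes appearing in $x_{n_2\alpha_i}^{\hat\nu}(z)$ lie in $\rho_in_2+\frac12\Z$ (not in the finer $\frac14\Z$), so that truncating the system to the window $A$ is legitimate and the resulting square system is genuinely Vandermonde. A secondary point is to confirm that the Leibniz-type identity of Lemma~\ref{TVO rel2} does furnish all $N$ up to $4\rho_in_2-1$ rather than merely $2n_2-1$; as in \cite{BS} this follows by combining Lemma~\ref{TVO rel2} with repeated application of (\ref{TVO eq2}) to raise the charge of one factor while differentiating, and by the vanishing $x_{(k+1)\alpha_i}^{\hat\nu}(z)=0$ which keeps everything inside $L(k\La_0)$. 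Once these two counting facts are in place, the proof is the same Vandermonde argument as in \cite[Lemma 4.1]{BS}, and I would simply cite that for the remaining routine manipulation.
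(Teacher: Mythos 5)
Your proposal follows essentially the same route as the paper's proof: expand the identities of Lemma~\ref{TVO rel2} for $N=0,\ldots,4\rho_in_2-1$, extract the coefficient of the fixed power of $z$ determined by $M$, restrict to the window $A$ (using Lemma~\ref{component rel1} to see that the modes run over $\rho_in_2+\frac12\Z$), and invert the resulting square binomial-coefficient (Vandermonde-type) matrix exactly as in \cite[Lemma 4.1]{BS}. One minor remark: since $\rho_i\le\frac12$ for all $i$, one has $4\rho_in_2\le 2n_2$, so the range $N\le 2n_2-1$ already furnished by Lemma~\ref{TVO rel2} suffices and no promotion of $N$ via (\ref{TVO eq2}) is actually needed.
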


\begin{proof}
For $N=0,1,\ldots,4\rho_in_2-1$, we consider the expansion of formal series
\begin{align}\label{expansion}
\frac{1}{N!}\left(\left(\frac{d}{dz}\right)^Nx_{n_2\alpha_i}^{\hat{\nu}}(z)\right)x_{n_1\alpha_i}^{\hat{\nu}}(z)=\sum_{\substack{M\in\rho_i(n_1+n_2)+\frac{1}{2}\Z\\s+t=M}}\binom{-s-n_2}{N}x_{n_2\alpha_i}^{\hat{\nu}}(s)x_{n_1\alpha_i}^{\hat{\nu}}(t)z^{-M-n_2-n_1-N}.
\end{align}
Note that the energy $m$ for quasi-particle with color $i$ and charge $n$ goes over $\rho_in+\frac{1}{2}\Z$ because of Lemma \ref{component rel1}.
Then for fixed $j\in \rho_in_2+\frac{1}{2}\Z, M\in\rho_i(n_1+n_2)+\frac{1}{2}\Z$, we can separate the right hand side of (\ref{expansion}) by
\begin{align}\label{expansion2}
\nonumber\binom{-j-n_2}{N}x_{n_2\alpha_i}^{\hat{\nu}}(j)x_{n_1\alpha_i}^{\hat{\nu}}(M-j)+\binom{-j-n_2+\frac{1}{2}}{N}x_{n_2\alpha_i}^{\hat{\nu}}\left(j-\frac{1}{2}\right)x_{n_1\alpha_i}^{\hat{\nu}}\left(M-j+\frac{1}{2}\right)+\cdots\\
+\binom{-j-n_2+\frac{4\rho_in_2-1}{2}}{N}x_{n_2\alpha_i}^{\hat{\nu}}\left(j-\frac{4\rho_in_2-1}{2}\right)x_{n_1\alpha_i}^{\hat{\nu}}\left(M-j+\frac{4\rho_in_2-1}{2}\right)+\text{ other terms}.
\end{align}
From Lemma \ref{TVO rel2}, we also have
\begin{align}\label{expansion3}
(\ref{expansion2})=A_N(z)x_{(n_1+1)\alpha_i}^{\hat{\nu}}(z)+B_N(z)\frac{d}{dz}x_{(n_1+1)\alpha_i}^{\hat{\nu}}(z).
\end{align}
Thus we obtain the linear equation
\begin{align*}
\begin{pmatrix}
\binom{-j-n_2}{0}&\binom{-j-n_2+\frac{1}{2}}{0}&\cdots&\binom{-j-n_2+\frac{4\rho_in_2-1}{2}}{0}\\
\binom{-j-n_2}{1}&\binom{-j-n_2+\frac{1}{2}}{1}&\cdots&\binom{-j-n_2+\frac{4\rho_in_2-1}{2}}{1}\\
\vdots&\vdots&\ddots&\vdots\\
\binom{-j-n_2}{4\rho_in_2-1}&\binom{-j-n_2+\frac{1}{2}}{4\rho_in_2-1}&\cdots&\binom{-j-n_2+\frac{4\rho_in_2-1}{2}}{4\rho_in_2-1}
\end{pmatrix}
\begin{pmatrix}
x_{n_2\alpha_i}^{\hat{\nu}}(j)x_{n_1\alpha_i}^{\hat{\nu}}(M-j)\\
x_{n_2\alpha_i}^{\hat{\nu}}\left(j-\frac{1}{2}\right)x_{n_1\alpha_i}^{\hat{\nu}}\left(M-j+\frac{1}{2}\right)\\
\vdots\\
x_{n_2\alpha_i}^{\hat{\nu}}\left(j-\frac{4\rho_in_2-1}{2}\right)x_{n_1\alpha_i}^{\hat{\nu}}\left(M-j+\frac{4\rho_in_2-1}{2}\right)
\end{pmatrix}=X,
\end{align*}
where $X$ is a vector whose components are linear combinations of monomials from the set $\{x_{n_2\alpha_i}^{\hat{\nu}}(s)x_{n_1\alpha_i}^{\hat{\nu}}(t)\mid s+t=M\}\setminus A$, and monomials which have as a factor the quasi-particle $x_{(n_1+1)\alpha_i}^{\hat{\nu}}(j^\prime)$ for $j^\prime\in\rho_i(n_1+1)$.
Now, the statement follows from the regularity of the coefficient matrix and it can be proved in the same way as Lemma 4.1 in \cite{BS}.
\end{proof}

Let $b$ be a quasi-particle monomial in $M_{QP}$.
From Lemma \ref{component rel2}, the $4\rho_in_2$ monomials
\[x_{n_2\alpha_i}^{\hat{\nu}}(m)x_{n_1\alpha_i}^{\hat{\nu}}(m^\prime)v,x_{n_2\alpha_i}^{\hat{\nu}}\left(m-\frac{1}{2}\right)x_{n_1\alpha_i}^{\hat{\nu}}\left(m^\prime+\frac{1}{2}\right)v,\ldots\]
\[\ldots,x_{n_2\alpha_i}^{\hat{\nu}}\left(m-\frac{4\rho_in_2-1}{2}\right)x_{n_1\alpha_i}^{\hat{\nu}}\left(m^\prime+\frac{4\rho_in_2-1}{2}\right)v\]
such that $n_2<n_1$ can be expressed as a linear combination of the monomials
\[x_{n_2\alpha_i}^{\hat{\nu}}(s)x_{n_1\alpha_i}^{\hat{\nu}}(t)v\text{ with }s\leq m-2\rho_in_2,\quad t\geq m+2\rho_in_2\]
and monomials which have a factor quasi-particle $x_{(n_1+1)\alpha_i}^{\hat{\nu}}(j)$, $j\in\rho_i(n_1+1)+\frac{1}{2}\Z$.
If $n_2=n_1$, then the monomials
\[x_{n_1\alpha_i}^{\hat{\nu}}(m)x_{n_1\alpha_i}^{\hat{\nu}}(m^\prime)v\text{ such that }m^\prime-\rho_in_1<m\leq m^\prime\]
can be expressed as a linear combination of the monomials
\[x_{n_1\alpha_i}^{\hat{\nu}}(s)x_{n_1\alpha_i}^{\hat{\nu}}(t)v\text{ with }s\leq t-\rho_in_1\]
and monomials which have a factor quasi-particle $x_{(n_1+1)\alpha_i}^{\hat{\nu}}(j)$, $j\in\rho_i(n_1+1)+\frac{1}{2}\Z$.

The next lemma reveals the relations among quasi-particle monomials with different colors.

\begin{lemma}\label{component rel3}
Let $P(z_{r_l^{(1)},l},\ldots,z_{1,1})$ be the polynomial defined by
\begin{align*}
P(z_{r_l^{(1)},l},\ldots,z_{1,1})&=\prod_{i=1}^l\prod_{p=1}^{r_i^{(1)}}\prod_{q=1}^{r_{i-1}^{(1)}}\prod_{j=0}^3\left(1-\zeta^j\frac{z_{q,i-1}^\frac{1}{4}}{z_{p,i}^\frac{1}{4}}\right)^{-\pair{\nu^j\alpha_i,\alpha_{i-1}}{\rm min}\{n_{p,i},n_{q,i-1}\}}\\
&=\prod_{i=1}^l\prod_{p=1}^{r_i^{(1)}}\prod_{q=1}^{r_{i-1}^{(1)}}\left(1-\frac{z_{q,i-1}^\frac{1}{2}}{z_{p,i}^\frac{1}{2}}\right)^{{\rm min}\{n_{p,i},n_{q,i-1}\}}.
\end{align*}
Then we have
\[P(z_{r_l^{(1)},l},\ldots,z_{1,1})\prod_{1\leq s<t\leq r_l^{(1)}}\left(1+\frac{z_{s,l}^\frac{1}{2}}{z_{t,l}^\frac{1}{2}}\right)^{n_{t,l}}x_{n_{r_l^{(1)},l}\alpha_l}^{\hat{\nu}}(z_{r_l^{(1)},l})\cdots x_{n_{1,1}\alpha_1}^{\hat{\nu}}(z_{1,1})v_0\]
\[\in\left[\left(\prod_{i=1}^l\prod_{p=1}^{r_i^{(1)}}z_{p,i}^{-\frac{1}{2}\sum_{q=1}^{r_{i-1}^{(1)}}{\rm min}\{n_{p,i},n_{q,i-1}\}}\right)\left(\prod_{t=1}^{r_l^{(1)}}z_{t,l}^{-\frac{1}{2}(t-1)n_{t,l}}\right)\right]W(k\La_0)[[z_{r_l^{(1)},l}^\frac{1}{4},\ldots,z_{1,1}^\frac{1}{4}]].\]
\end{lemma}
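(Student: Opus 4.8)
The plan is to mimic the strategy of \cite[Lemma 3.3]{G} and of the corresponding twisted statement in \cite{BS}: reduce everything to the exponential operators $E^\pm$ and then normal order the product against the highest weight vector $v_0$. First I would rewrite each twisted quasi-particle by combining its definition (\ref{TVO3}) with the coproduct and the explicit expression (\ref{TVO2}) for $Y^{\hat{\nu}}(\iota(e_{\alpha_i}),z)$. This writes $x_{n\alpha_i}^{\hat{\nu}}(z)$ as a sum, over the ways of distributing its $n$ constituent operators $Y^{\hat{\nu}}(\iota(e_{\alpha_i}),z)$ among the $k$ tensor factors of $(V_L^T)^{\ot k}$, of products of $E^-(-\alpha_i,z)$, $E^+(-\alpha_i,z)$, the group element $e_{\alpha_i}$ and a power of $z$, acting in the chosen factors. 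A key observation, already built into the definition of the quasi-particles, is that two copies of $Y^{\hat{\nu}}(\iota(e_{\alpha_i}),z)$ applied to the same tensor factor vanish: this follows from the null identity on $V_L$ (equivalently $x_{(k+1)\alpha}^{\hat{\nu}}(z)=0$), since $\pair{\nu^0\alpha_i,\alpha_i}=2>0$ forces $\prod_{j=0}^3(1-\zeta^j)^{\pair{\nu^j\alpha_i,\alpha_i}}$ to be $0$. Hence only the distributions placing the $n$ copies in $n$ \emph{distinct} factors contribute, so each charge-$n_{p,i}$ quasi-particle is supported on an $n_{p,i}$-element subset of $\{1,\dots,k\}$.

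Next I would apply the product to $v_0={\bf 1}_T\ot\cdots\ot{\bf 1}_T$ and push every $E^+$ to the right, past every $E^-$ via (\ref{Ecom}) and past the group elements and $z$-powers via (\ref{ehme2}) and $z^h e_\alpha=e_\alpha z^{h+\pair{h,\overline{a}}}$; once on the right they act as the identity on $v_0$. Each interchange of a color-$i$ factor $E^+(-\alpha_i,z_{p,i})$ with a color-$(i-1)$ factor $E^-(-\alpha_{i-1},z_{q,i-1})$ contributes the scalar $\prod_{j=0}^3\bigl(1-\zeta^j z_{q,i-1}^{1/4}/z_{p,i}^{1/4}\bigr)^{\pair{\nu^j\alpha_i,\alpha_{i-1}}}$, and interchanging the $e$'s and the $z$-powers only produces cocycle scalars $C(\cdot,\cdot)$ and monomials in the $z_{p,i}$'s. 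Evaluating the Cartan pairings, $\pair{\nu^j\alpha_i,\alpha_{i-1}}=(-1,0,-1,0)$ and $\pair{\nu^j\alpha_l,\alpha_l}=(2,-1,2,-1)$ for $j=0,1,2,3$, so the adjacent-color factor collapses to $\bigl(1-z_{q,i-1}^{1/2}/z_{p,i}^{1/2}\bigr)^{-1}$ per shared tensor factor (exactly the inverse of a factor of $P$), and the color-$l$ self-interaction collapses to $\bigl(1-z_{s,l}^{1/2}/z_{t,l}^{1/2}\bigr)^2\bigl(1+z_{s,l}^{1/2}/z_{t,l}^{1/2}\bigr)^{-1}$ per shared factor (with $s<t$, the $(t,l)$-operator being on the left).

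Finally I would count poles. For a fixed distribution the factor $\bigl(1-z_{q,i-1}^{1/2}/z_{p,i}^{1/2}\bigr)^{-1}$ occurs once for every tensor factor in the intersection of the supports of the $(p,i)$- and $(q,i-1)$-quasi-particles, hence with order at most $\min\{n_{p,i},n_{q,i-1}\}$; the same holds for the $\bigl(1+z_{s,l}^{1/2}/z_{t,l}^{1/2}\bigr)^{-1}$'s with bound $\min\{n_{s,l},n_{t,l}\}=n_{t,l}$. These bounds are uniform over all surviving distributions, so multiplication by $P(z_{r_l^{(1)},l},\dots,z_{1,1})$ clears the adjacent-color poles and, after pulling out the monomial $\prod_i\prod_p z_{p,i}^{-\frac12\sum_q\min\{n_{p,i},n_{q,i-1}\}}$, leaves a power series, while multiplication by $\prod_{1\le s<t\le r_l^{(1)}}\bigl(1+z_{s,l}^{1/2}/z_{t,l}^{1/2}\bigr)^{n_{t,l}}$ clears the color-$l$ denominators and extracts $\prod_t z_{t,l}^{-\frac12(t-1)n_{t,l}}$. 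The coefficients lie automatically in $W(k\La_0)=U(\overline{\enu}[\nu])\cdot v_0$, since the left-hand side is to begin with a product of twisted quasi-particles applied to $v_0$, and Lemma \ref{component rel1} pins down the fractional-power lattice in which the modes live. I expect the main obstacle to be the bookkeeping in the last two steps — tracking the coproduct distribution, the null-vector vanishing and the collapse of the $\zeta$-products simultaneously, establishing the minimum-overlap bound uniformly over all distributions, and identifying the precise exponents in the monomial prefactor.
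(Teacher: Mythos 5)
Your proposal is correct and follows essentially the same route as the paper: the paper likewise normal-orders via the commutation relation (\ref{Ecom}), records the two-variable relations for same-color and adjacent-color quasi-particles with the $\min\{n,n'\}$ exponents, and applies them to the full generating function acting on $v_0$. Your coproduct-distribution and support-overlap argument is exactly the (unwritten) justification for those $\min\{n,n'\}$ bounds, and your evaluations of the pairings $\pair{\nu^j\alpha_i,\alpha_{i-1}}$ and $\pair{\nu^j\alpha_l,\alpha_l}$ and of the resulting collapsed factors agree with the paper's.
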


\begin{proof}
By using (\ref{Ecom}), we have
\begin{align*}
&E^-(\alpha,z)E^+(\alpha,z)e_\alpha z^{\alpha_{(0)}}E^-(\beta,w)E^+(\beta,w)e_\beta w^{\beta_{(0)}}\\
&=\prod_{j=0}^3\left(1-\zeta^j\frac{w^\frac{1}{4}}{z^\frac{1}{4}}\right)^{\pair{\nu^j\alpha,\beta}}E^-(\alpha,z)E^-(\beta,w)E^+(\alpha,z)E^+(\beta,w)e_\alpha z^{\alpha_{(0)}}e_\beta w^{\beta_{(0)}}\\
&=\prod_{j=0}^3(z^\frac{1}{4}-\zeta^jw^\frac{1}{4})^{\pair{\nu^j\alpha,\beta}}E^-(\alpha,z)E^-(\beta,w)E^+(\alpha,z)E^+(\beta,w)e_\alpha e_\beta z^{\alpha_{(0)}}w^{\beta_{(0)}}
\end{align*}
on $V_L^T$, where we recall that $z^ha=az^{h+\pair{h,\overline{a}}}$ for $h\in\heh_{(0)}$, $a\in\hat{L}_\nu$.
Therefore we have
\begin{enumerate}
\item$x_{n\alpha_i}^{\hat{\nu}}(z)x_{n^\prime\alpha_i}^{\hat{\nu}}(w)v_0\in W(k\La_0)[[z^\frac{1}{4},w^\frac{1}{4}]]$ for $i=1,\cdots,l-1$,\\
\item$(z^\frac{1}{2}+w^\frac{1}{2})^{{\rm min}\{n,n^\prime\}}x_{n\alpha_l}^{\hat{\nu}}(z)x_{n^\prime\alpha_l}^{\hat{\nu}}(w)v_0\in W(k\La_0)[[z^\frac{1}{4},w^\frac{1}{4}]]$,\\
\item$\prod_{j=0}^3(z^\frac{1}{4}-\zeta^jw^\frac{1}{4})^{-\pair{\nu^j\alpha_i,\alpha_{i-1}}{\rm min}\{n,n^\prime\}}x_{n\alpha_i}^{\hat{\nu}}(z)x_{n^\prime\alpha_{i-1}}^{\hat{\nu}}(w)v_0\in W(k\La_0)[[z^\frac{1}{4},w^\frac{1}{4}]]$\\
$\Leftrightarrow\left(1-\frac{w^\frac{1}{2}}{z^\frac{1}{2}}\right)^{{\rm min}\{n,n^\prime\}}x_{n\alpha_i}^{\hat{\nu}}(z)x_{n^\prime\alpha_{i-1}}^{\hat{\nu}}(w)v_0\in[z^{-\frac{1}{2}{\rm min}\{n,n^\prime\}}]W(k\La_0)[[z^\frac{1}{4},w^\frac{1}{4}]].$
\end{enumerate}
We can prove the lemma by applying these relations to the generating function $x_{n_{r_l^{(1)},l}\alpha_l}^{\hat{\nu}}(z_{r_l^{(1)},l})\cdots x_{n_{1,1}\alpha_1}^{\hat{\nu}}(z_{1,1})$.
\end{proof}

\subsection{Quasi-particle bases of principal subspace}
In view of Lemma \ref{component rel2} and \ref{component rel3}, we consider the following conditions ($C1$)-($C3$) for the modes $m$ in $x_{n\alpha_i}^{\hat{\nu}}(m)$ in (\ref{QP}).

\begin{align*}
(C1)\quad&m_{p,i}\in \rho_in_{p,i}+\frac{1}{2}\Z\quad\text{for }1\leq p\leq r_i^{(1)},1\leq i\leq l,\\
(C2)\quad&m_{p,i}\leq-(2p-1)\rho_in_{p,i}+\frac{1}{2}\sum_{q=1}^{r_{i-1}^{(1)}}{\rm min}\{n_{p,i},n_{q,i-1}\}\quad\text{for }1\leq p\leq r_i^{(1)},~1\leq i\leq l,\\
(C3)\quad&m_{p+1,i}\leq m_{p,i}-2\rho_in_{p,i}\quad\text{if }n_{p+1,i}=n_{p,i}\quad\text{for }1\leq p\leq r_i^{(1)}-1,1\leq i\leq l.
\end{align*}
We set
\[B_W=\bigcup_{\substack{0\leq r_1^{(k)}\leq\cdots\leq r_1^{(1)}\\
\vdots\\
0\leq r_l^{(k)}\leq\cdots\leq r_l^{(1)}}}\{b\text{ as in }(\ref{QP})\mid\text{satisfying }(C1)-(C3)\}\]
where $r_0^{(1)}=0$.
Then by using the same proof as in \cite[Theorem 4.1]{G}, we can prove the following proposition (cf. \cite{BS}).

\begin{proposition}\label{span}
The set
\[\mathcal{B}_W=\{bv_0\mid b\in B_W\}\]
spans $W(k\La_0)$.
\end{proposition}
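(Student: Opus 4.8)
The plan is to adapt Georgiev's spanning argument from \cite[Theorem 4.1]{G} (see also \cite{BS}) to the twisted type $A_{2l}^{(2)}$ setting: starting from the color‑ordered monomials of Lemma \ref{ordered set} acting on $v_0$, reduce an arbitrary one to a linear combination of elements of $\mathcal{B}_W$ by a finite induction driven by the relations of Lemmas \ref{component rel2} and \ref{component rel3}. First I would pass to monomials of the form (\ref{QP}). By Lemma \ref{ordered set}, $W(k\La_0)$ is spanned by $bv_0$ with $b$ a product of quasi-particles $x_{n\alpha_i}^{\hat{\nu}}(m)$ grouped by color, color $l$ on the far left and color $1$ on the far right. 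The null relation $x_{(k+1)\alpha}^{\hat{\nu}}(z)=0$ on $L(k\La_0)$ lets me restrict to charges $1\le n\le k$, and Lemma \ref{component rel1}(i),(ii) puts every mode of a color-$i$ charge-$n$ quasi-particle into $\rho_in+\tfrac12\Z$, which is $(C1)$. Reordering the quasi-particles within each color block so that charges and modes are arranged as in (\ref{QP}) costs only correction terms containing a quasi-particle of strictly larger charge, so $\{bv_0\mid b\in M_{QP}\}$ already spans $W(k\La_0)$.

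Next I would set up the induction. Since $W(k\La_0)$ is graded by $d$ and by $\heh_{(0)}$ with finite-dimensional weight spaces, it suffices to span each weight space; fixing the weight fixes the color-type $\mathcal{C}$, and since every $x_{n\alpha_i}^{\hat{\nu}}(m)$ annihilates a given vector once $m\gg0$, only finitely many monomials $bv_0$ with $b\in M_{QP}$ are nonzero in a fixed weight space, so only finitely many charge-types $\mathcal{R}^\prime$ and energy-types $\mathcal{E}$ occur. The claim to be proved by induction is that a monomial $b\in M_{QP}$ violating $(C2)$ or $(C3)$ equals a linear combination of monomials that are either strictly larger in the charge-type order ``$<$'' of the text, or have the same charge-type and an energy-type pushed strictly toward the region cut out by $(C2)$--$(C3)$. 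Since these indexing data range over finite sets, iterating the rewritings terminates and expresses each $bv_0$ through elements of $\mathcal{B}_W$.

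The rewritings come from the two families of relations. To impose $(C3)$: an equal-charge pair $x_{n\alpha_i}^{\hat{\nu}}(m_{p+1,i})x_{n\alpha_i}^{\hat{\nu}}(m_{p,i})$ with $m_{p+1,i}>m_{p,i}-2\rho_in$ is rewritten, via the $n_2=n_1$ relation recorded right after Lemma \ref{component rel2}, as a combination of pairs with the required spacing plus monomials carrying a charge-$(n+1)$ quasi-particle; iterating over the equal-charge pairs in each block enforces $(C3)$. To impose $(C2)$: Lemma \ref{component rel3} says that after multiplication by the polynomial $P$ and by $\prod_{1\le s<t\le r_l^{(1)}}(1+z_{s,l}^{1/2}/z_{t,l}^{1/2})^{n_{t,l}}$ the color-ordered generating function lies in $W(k\La_0)$ with an explicit prefactor of negative $z_{p,i}$-powers; extracting the coefficient of the relevant monomial in the $z$'s shows that a quasi-particle with $m_{p,i}$ exceeding the bound of $(C2)$ can be traded for monomials with $m_{p,i}$ strictly smaller, together with monomials carrying a charge-$(n_{p,i}+1)$ quasi-particle of color $i-1$ or $i$; the relations of Lemma \ref{component rel2} (both the $n_2<n_1$ and the $n_2=n_1$ cases) propagate the bound obtained for the rightmost quasi-particle of each color block to all $p$. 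Feeding every correction term into the inductive hypothesis completes the proof.

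The main obstacle is the bookkeeping needed to close the induction: one must verify that imposing $(C3)$ on one pair or $(C2)$ on one quasi-particle does not create violations uncontrolled by the chosen order, and that every correction term from Lemmas \ref{component rel2} and \ref{component rel3} is genuinely smaller in the (charge-type, energy-type) order or strictly larger in charge-type. A further delicate point, and the reason type $A_{2l}^{(2)}$ was left out of \cite{OT}, is the exceptional behaviour of color $l$: since $\rho_l$ differs from the other $\rho_i$ and the color-$l$ diagonal factor in Lemma \ref{component rel3} is $\prod_{1\le s<t\le r_l^{(1)}}(1+z_{s,l}^{1/2}/z_{t,l}^{1/2})^{n_{t,l}}$ rather than a product of differences, the exact shift $\tfrac12\sum_q\min\{n_{p,i},n_{q,i-1}\}$ and the leading term $-(2p-1)\rho_in_{p,i}$ of $(C2)$ must be read off by carefully tracking the $z_{p,i}$-degrees through both the same-color and the different-color relations.
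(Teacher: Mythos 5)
Your proposal follows essentially the same route as the paper: Lemma \ref{ordered set} gives the initial color-ordered spanning set, Lemma \ref{component rel1} yields $(C1)$, Lemma \ref{component rel3} (via taking residues/coefficients of the generating function) gives the weaker energy bound that Lemma \ref{component rel2} with $n_2<n_1$ then sharpens to $(C2)$, the $n_2=n_1$ case gives $(C3)$, and the whole reduction terminates by induction on the linear order because monomials of fixed charge-type and total energy are bounded above. The ingredients, their roles, and the inductive structure all match the paper's argument, so the proposal is correct.
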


\begin{proof}
Since we have
\[U\cdot v_0=W(k\La_0)\]
from Lemma \ref{ordered set}, we should show that every vector $bv_0$ from $U\cdot v_0$ is a linear combination of vectors from $\mathcal{B}_W$.
The condition (C1) follows from Lemma \ref{component rel1}.
Using the same argument as in \cite{G} and the fact that for fixed charge-type and total-energy, the set of quasi-particles is upper bounded with respect to our order, we can derive the remaining conditions.
All vectors $bv_0$ are expressed as a combination of vectors which satisfy the weaker condition
\[(C2)^\prime\quad m_{p,i}\leq-\pair{(\alpha_i)_{(0)},(\alpha_{i-1})_{(0)}}\sum_{q=1}^{r_{i-1}^{(1)}}{\rm min}\{n_{p,i},n_{q,i-1}\}+\frac{\delta_{i,l}}{2}(p-1)n_{p,l}-\rho_in_{p,i},\]
where the first term of the right hand side of (C2$)^\prime$ is equal to 0 when $i=1$.
This condition is a consequence of Lemma \ref{component rel3}.
In fact, for a quasi-particle $b$ which contradicts the condition (C2$)^\prime$, by taking residue of the generating function of $b$ in Lemma \ref{component rel3} we have that $b$ is a linear combination of quasi-particles $b^\prime$ larger than $b$ since the residue of the right hand side is equal to 0.
Thus by induction, we can show the claim.
From Lemma \ref{component rel2} in the case that $n_2<n_1$, we can increase an energy of quasi-particles.
Therefore  we are able to strengthen (C2$)^\prime$ to (C2) by induction.
Finally, Lemma \ref{component rel2} in the case of $n_2=n_1$ implies (C3).
\end{proof}

\subsection{Proof of linear independence}
To prove the linear independence of the set $\mathcal{B}_W$, we recall the map $\Delta^T$ introduced in \cite{CLM, Li}.
Let $\lambda_i$ $(i=1,\ldots,2l)$ be the fundamental weights of $\geh$.
Set
\[\Delta^T(\lambda_i,-z)=\zeta^{2(\lambda_i)_{(0)}}z^{(\lambda_i)_{(0)}}E^+(-\lambda_i,z).\]
Note that $\zeta^{2(\lambda_i)_{(0)}}$ and $z^{(\lambda_i)_{(0)}}$ are operators on $U_T$ and thus on $V_L^T$, and $E^+(-\lambda_i,z)$ also on $V_L^T$, so that we have
\[\Delta^T(\lambda_i,-z)\in{\rm End}\ V_L^T[[z^{\pm\frac{1}{4}}]].\]
From \cite[Proposition 3.4]{LW}, the commutation relation (\ref{Ecom}) also holds for $E^+(\lambda_i,-z)$.
The constant term of $\Delta^T(\lambda,-z)$ is denoted by $\Delta_c^T(\lambda,-z)$.
Following \cite{CLM, CMP}, we set $\theta_j:L\rightarrow\C^\times$ as the character of the root lattice $L$ to be
\[\theta_j(\alpha_j)=-\zeta,\quad\theta_j(\alpha_{2l-j+1})=\zeta,\quad\theta_j(\alpha_i)=1\text{ for }i\notin\{j,2l-j+1\}.\]
Define the Lie algebra automorphism $\tau_{\lambda_j,\theta_j}:\overline{\enu}[\nu]\rightarrow\overline{\enu}[\nu]$ by
\[\tau_{\lambda_j,\theta_j}(x_\alpha^{\hat{\nu}}(m))=\theta_j(\alpha)x_\alpha^{\hat{\nu}}(m+\pair{\alpha_{(0)},\lambda_j}).\]
For $a\in U(\overline{\enu}[\nu])$, we have
\begin{align}\label{Delta}
\Delta_c^T(\lambda_j,-z)(a\cdot{\bf 1}_T)=\tau_{\lambda_j,\theta_j}(a){\bf 1}_T
\end{align}
on $V_L^T$.
Using (\ref{Eh}), we have
\begin{align*}
[\Delta^T(\lambda_i,-z),\alpha^{\hat{\nu}}(m)]=\begin{cases}
0&(m\geq0),\\
\pair{\alpha_{(4m)},(\lambda_i)_{(-4m)}}z^m\Delta^T(\lambda_i,-z)&(m<0).
\end{cases}
\end{align*}
Therefore we obtain
\begin{align}\label{Delta2}
%&[\Delta_c^T(\lambda_j,-z),\alpha^{\hat{\nu}}(m)]=\pair{\alpha_{(4m)},(\lambda_j)_{(-4m)}}{\rm Res}_{z=0}z^m\Delta_c^T(\lambda_j,-z)\quad(m>0),\\
%\label{Delta3}
&[\Delta_c^T(\lambda_j,-z),\alpha^{\hat{\nu}}(m)]=0%\quad(m\leq0),
\end{align}
 as an operator on $V_L^T$ for $\alpha^\nu(m)\in\hat{\heh}[\nu]$.
Furthermore, for a bijection map $e_{\alpha_i}$, we have
\begin{align}\label{Delta4}
[\Delta_c^T(\lambda_j,-z),e_{\alpha_i}]=0\quad(i\neq j)
\end{align}
from $z^{(\lambda_j)_{(0)}}e_{\alpha_i}=e_{\alpha_i}z^{\frac{\delta_{i,j}}{2}+(\lambda_j)_{(0)}}$ and the fact that we define $\Delta_c^T$ as the constant term.

We consider the Georgiev-type projection (cf. \cite{BS, G}).
We realize $W(k\La_0)$ as a subspace of the $k$ fold tensor product of $W(\La_0)$ in the basic module $V_L^T$.
Namely, we have
\[W(k\La_0)\subset W(\La_0)\ot\cdots\ot W(\La_0)\subset(V_L^T)^{\ot k}.\]
The projection $\pi_{\mathcal{R}}$ is defined by
\[\pi_{\mathcal{R}}:W(k\La_0)\rightarrow W(\La_0)_{(r_l^{(k)},\ldots,r_1^{(k)})}\ot\cdots\ot W(\La_0)_{(r_l^{(1)},\ldots,r_1^{(1)})}\]
for a dual-charge-type $\calR$ given by
\[\mathcal{R}=\left(r_l^{(1)},\ldots,r_l^{(k)};\cdots;r_1^{(1)},\ldots,r_1^{(k)}\right),\]
where $W(\La_0)_{(r_l^{(s)},\ldots,r_1^{(s)})}$ is the subspace of $W(\La_0)$ spanned by the vectors whose charges are $r_l^{(s)},\ldots,r_1^{(s)}$ for $1\leq s\leq k$.

By using the relation $x_{2\alpha}^{\hat{\nu}}(z)=0$ on $V_L^T$, we have
\[\pi_{\mathcal{R}}x_{n_{r_l^{(1)},l}\alpha_l}^{\hat{\nu}}(z_{r_l^{(1)},l})\cdots x_{n_{1,1}\alpha_1}^{\hat{\nu}}(z_{1,1})v_0\]
\begin{align*}
%=\prod_{i=1}^l\prod_{p=1}^{r_i^{(1)}}(n_{p,i})!
=C_{\calR}&\left(Y^{\hat{\nu}}(\iota(e_{\alpha_l}),z_{r_l^{(k)},l})^{n_{r_l^{(k)},l}^{(k)}}\cdots Y^{\hat{\nu}}(\iota(e_{\alpha_l}),z_{1,l})^{n_{1,l}^{(k)}}\cdots Y^{\hat{\nu}}(\iota(e_{\alpha_1}),z_{r_1^{(k)},1})^{n_{r_1^{(k)},1}^{(k)}}\cdots Y^{\hat{\nu}}(\iota(e_{\alpha_1}),z_{1,1})^{n_{1,1}^{(k)}}{\bf 1}_T\right.\\
%x_{n_{r_l^{(k)},l}^{(k)}\alpha_l}^{\hat{\nu}}(z_{r_l^{(k)},l})\cdots x_{n_{1,l}^{(k)}\alpha_l}^{\hat{\nu}}(z_{1,l})\cdots x_{n_{r_1^{(k)},1}^{(k)}\alpha_1}^{\hat{\nu}}(z_{r_1^{(k)},1})\cdots x_{n_{1,1}^{(k)}\alpha_1}^{\hat{\nu}}(z_{1,1}){\bf 1}_T\right.\\
&\phantom{aa}\ot\cdots\ot\\
&\phantom{aa}Y^{\hat{\nu}}(\iota(e_{\alpha_l}),z_{r_l^{(s)},l})^{n_{r_l^{(s)},l}^{(s)}}\cdots Y^{\hat{\nu}}(\iota(e_{\alpha_l}),z_{1,l})^{n_{1,l}^{(s)}}\cdots Y^{\hat{\nu}}(\iota(e_{\alpha_1}),z_{r_1^{(s)},1})^{n_{r_1^{(s)},1}^{(s)}}\cdots Y^{\hat{\nu}}(\iota(e_{\alpha_1}),z_{1,1})^{n_{1,1}^{(s)}}{\bf 1}_T\\
%x_{n_{r_l^{(s)},l}^{(s)}\alpha_l}^{\hat{\nu}}(z_{r_l^{(s)},l})\cdots x_{n_{1,l}^{(s)}\alpha_l}^{\hat{\nu}}(z_{1,l})\cdots x_{n_{r_1^{(s)},1}^{(s)}\alpha_1}^{\hat{\nu}}(z_{r_1^{(s)},1})\cdots x_{n_{1,1}^{(s)}\alpha_1}^{\hat{\nu}}(z_{1,1}){\bf 1}_T\\
&\phantom{aa}\ot\cdots\ot\\
&\phantom{a.}\left.Y^{\hat{\nu}}(\iota(e_{\alpha_l}),z_{r_l^{(1)},l})^{n_{r_l^{(1)},l}^{(1)}}\cdots Y^{\hat{\nu}}(\iota(e_{\alpha_l}),z_{1,l})^{n_{1,l}^{(1)}}\cdots Y^{\hat{\nu}}(\iota(e_{\alpha_1}),z_{r_1^{(1)},1})^{n_{r_1^{(1)},1}^{(1)}}\cdots Y^{\hat{\nu}}(\iota(e_{\alpha_1}),z_{1,1})^{n_{1,1}^{(1)}}{\bf 1}_T\right)\\
%x_{n_{r_l^{(1)},l}^{(1)}\alpha_l}^{\hat{\nu}}(z_{r_l^{(1)},l})\cdots x_{n_{1,l}^{(1)}\alpha_l}^{\hat{\nu}}(z_{1,l})\cdots x_{n_{r_1^{(1)},1}^{(1)}\alpha_1}^{\hat{\nu}}(z_{r_1^{(1)},1})\cdots x_{n_{1,1}^{(1)}\alpha_1}^{\hat{\nu}}(z_{1,1}){\bf 1}_T\right),
\end{align*}
where $C_{\calR}$ is the constant term given by $\prod_{i=1}^l\prod_{p=1}^{r_i^{(1)}}(n_{p,i})!$ and
\[0\leq n_{p,i}^{(k)}\leq n_{p,i}^{(k-1)}\leq\cdots\leq n_{p,i}^{(2)}\leq n_{p,i}^{(1)}\leq1\text{ and }n_{p,i}=\sum_{s=1}^kn_{p,i}^{(s)}\]
for $1\leq p\leq r_i^{(1)}$, $1\leq i\leq l$.
Note that the projection of  $bv_0$ with $b$ as in (\ref{QP}) is obtained by a coefficient of this projection.

Now, we fix $s\leq k$ and consider the map
\[\Delta_c^T(\lambda_j,-z)_s=1\ot\cdots\ot1\ot\Delta_c^T(\lambda_j,-z)\ot1\ot\cdots\ot1\]
which only acts on the $s$-th tensor component.
From (\ref{Delta}), we have
\[\Delta_c^T(\lambda_j,-z)_s\pi_{\mathcal{R}}x_{n_{r_l^{(1)},l}\alpha_l}^{\hat{\nu}}(z_{r_l^{(1)},l})\cdots x_{n_{1,1}\alpha_1}^{\hat{\nu}}(z_{1,1})v_0\]
\begin{align*}
=(-\zeta)^{r_j^{(s)}}C_{\calR}&\left(Y^{\hat{\nu}}(\iota(e_{\alpha_l}),z_{r_l^{(k)}})\cdots Y^{\hat{\nu}}(\iota(e_{\alpha_l}),z_{1,l})\cdots Y^{\hat{\nu}}(\iota(e_{\alpha_1}),z_{r_1^{(k)},1})\cdots Y^{\hat{\nu}}(\iota(e_{\alpha_1}),z_{1,1}){\bf 1}_T\right.\\
%\propto&\phantom{a}x_{n_{r_l^{(k)},l}^{(k)}\alpha_l}^{\hat{\nu}}(z_{r_l^{(k)},l})\cdots x_{n_{1,l}^{(k)}\alpha_l}^{\hat{\nu}}(z_{1,l})\cdots x_{n_{r_1^{(k)},1}^{(k)}\alpha_1}^{\hat{\nu}}(z_{r_1^{(k)},1})\cdots x_{n_{1,1}^{(k)}\alpha_1}^{\hat{\nu}}(z_{1,1}){\bf 1}_T\\
&\phantom{aa}\ot\cdots\ot\\
&\left(Y^{\hat{\nu}}(\iota(e_{\alpha_l}),z_{r_l^{(s)},l})\cdots Y^{\hat{\nu}}(\iota(e_{\alpha_l}),z_{1,l})\cdots Y^{\hat{\nu}}(\iota(e_{\alpha_1}),z_{r_1^{(s)},1})\cdots Y^{\hat{\nu}}(\iota(e_{\alpha_1}),z_{1,1}){\bf 1}_T\right)\prod_{i=1}^l\prod_{p=1}^{r_i^{(s)}}z_{p,i}^\frac{\delta_{i,j}}{2}\\
%x_{n_{r_l^{(s)},l}^{(s)}\alpha_l}^{\hat{\nu}}(z_{r_l^{(s)},l})z_{r_l^{(s)},l}^\frac{\delta_{j,l}}{2}\cdots x_{n_{1,l}^{(s)}\alpha_l}^{\hat{\nu}}(z_{1,l})z_{1,l}^\frac{\delta_{j,l}}{2}\cdots x_{n_{r_1^{(s)},1}^{(s)}\alpha_1}^{\hat{\nu}}(z_{r_1^{(s)},1})z_{r_1^{(s)},1}^\frac{\delta_{j,1}}{2}\cdots x_{n_{1,1}^{(s)}\alpha_1}^{\hat{\nu}}(z_{1,1})z_{1,1}^\frac{\delta_{j,1}}{2}{\bf 1}_T\\
&\phantom{aa}\ot\cdots\ot\\
&\phantom{a}\left.Y^{\hat{\nu}}(\iota(e_{\alpha_l}),z_{r_l^{(1)},l})\cdots Y^{\hat{\nu}}(\iota(e_{\alpha_l}),z_{1,l})\cdots Y^{\hat{\nu}}(\iota(e_{\alpha_1}),z_{r_1^{(1)},1})\cdots Y^{\hat{\nu}}(\iota(e_{\alpha_1}),z_{1,1}){\bf 1}_T\right).
%x_{n_{r_l^{(1)},l}^{(1)}\alpha_l}^{\hat{\nu}}(z_{r_l^{(1)},l})\cdots x_{n_{1,l}^{(1)}\alpha_l}^{\hat{\nu}}(z_{1,l})\cdots x_{n_{r_1^{(1)},1}^{(1)}\alpha_1}^{\hat{\nu}}(z_{r_1^{(1)},1})\cdots x_{n_{1,1}^{(1)}\alpha_1}^{\hat{\nu}}(z_{1,1}){\bf 1}_T.
\end{align*}
Thus by taking the corresponding coefficient, we have
\begin{align}\label{process}
\Delta_c^T(\lambda_j,-z)_s\pi_\mathcal{R}bv_0=(-\zeta)^{r_j^{(s)}}\pi_\mathcal{R}b^+v_0,
\end{align}
where $b^+=b(\alpha_l)\cdots b^+(\alpha_j)\cdots b(\alpha_1)$ such that
\[b^+(\alpha_j)=x_{n_{r_j^{(1)},j}\alpha_j}^{\hat{\nu}}(m_{r_j^{(1)},j})\cdots x_{n_{r_j^{(s)}+1,j}\alpha_j}^{\hat{\nu}}(m_{r_j^{(s)}+1,j})x_{n_{r_j^{(s)},j}\alpha_j}^{\hat{\nu}}\left(m_{r_j^{(s)},j}+\frac{1}{2}\right)\cdots x_{n_{1,j}\alpha_j}^{\hat{\nu}}\left(m_{1,j}+\frac{1}{2}\right).\]
For $b$ as in (\ref{QP}), we set $s=n_{1,1}$, $d=-2m_{1,1}-s$.
From  (\ref{process}), we get
\[\Delta_c^T(\lambda_1,-z)_s^d\pi_{\mathcal{R}}bv_0=(-\zeta)^{dr_j^{(s)}}\pi_{\mathcal{R}}x_{n_{r_l^{(1)},l}\alpha_l}^{\hat{\nu}}(m_{r_l^{(1)},l})\cdots x_{n_{r_1^{(1)},1}\alpha_1}^{\hat{\nu}}(m_{r_1^{(1)},1})\cdots x_{n_{r_1^{(s)}+1,1}\alpha_1}^{\hat{\nu}}(m_{r_1^{(s)}+1,1})\]
\[x_{n_{r_1^{(s)},1}\alpha_1}^{\hat{\nu}}\left(m_{r_1^{(s)},1}+\frac{d}{2}\right)\cdots x_{n_{2,1}\alpha_1}^{\hat{\nu}}\left(m_{2,1}+\frac{d}{2}\right)\left({\bf 1}_T\ot\cdots\ot{\bf 1}_T\ot Y_{\alpha_1}^{\hat{\nu}}\left(-\frac{1}{2}\right){\bf 1}_T\ot\cdots\ot Y_{\alpha_1}^{\hat{\nu}}\left(-\frac{1}{2}\right){\bf 1}_T\right).\]
Since $Y_{\alpha_1}^{\hat{\nu}}\left(-\frac{1}{2}\right){\bf 1}_T=\frac{1}{2}e_{\alpha_1}$, by using (\ref{exe}), we have
%\[\zeta^{dr_j^{(s)}}C_{\calR}^{-1}\Delta_c^T(\lambda_1,-z)_s^d\pi_{\mathcal{R}}bv_0\]
\begin{align*}
\zeta^{dr_j^{(s)}}C_{\calR}^{-1}\Delta_c^T(\lambda_1,-z)_s^d\pi_{\mathcal{R}}bv_0=&\left(\frac{1}{2}\right)^sC(\alpha_1,\alpha_2)^{\sum_{p=1}^{r_2^{(1)}}n_{p,2}}C_{\calR^-}^{-1}(1\ot\cdots\ot1\ot e_{\alpha_1}\ot\cdots\ot e_{\alpha_1})\\
\cdot&\pi_{\mathcal{R}^-}x_{n_{r_l^{(1)},l}\alpha_l}^{\hat{\nu}}(m_{r_l^{(1)},l})\cdots x_{n_{1,3}\alpha_3}^{\hat{\nu}}(m_{1,3})x_{n_{r_2^{(1)},2}\alpha_2}^{\hat{\nu}}(m_{r_2^{(1)},2}^\prime)\cdots x_{n_{2,1}\alpha_1}^{\hat{\nu}}(m_{2,1}^\prime)v_0,
\end{align*}
where $\mathcal{R}^-$ is the dual-charge-type given by
\[\mathcal{R}^-=\left(r_l^{(1)},\ldots,r_l^{(k)};\cdots;r_1^{(1)}-1,\ldots,r_1^{(s)}-1,0,\ldots,0\right)\]
and
\begin{align}
\label{m}&m_{p,1}^\prime=m_{p,1}+\frac{d}{2}+n_{p,1}=m_{p,1}-m_{1,1}-\frac{n_{1,1}}{2}+n_{p,1}\quad\text{for }2\leq p\leq r_1^{(s)},\\
\label{m2}&m_{p,1}^\prime=m_{p,1}+n_{p,1}\quad\text{for }r_1^{(s)}<p,\\
\label{m3}&m_{p,2}^\prime=m_{p,2}-\frac{n_{p,2}}{2}.
\end{align}
Note that these $m_{p,i}^\prime$ $(i=1,2)$ satisfy
\begin{enumerate}
\item $m_{p,i}^\prime\leq-\frac{n_{p,i}}{2}+\pair{(\alpha_i)_{(0)},(\alpha_{i-1})_{(0)}}\sum_{q=2}^{r_{i-1}^{(1)}}{\rm min}\{n_{p,i},n_{q,i-1}\}-\sum_{p>p^\prime>0}{\rm min}\{n_{p,i}n_{p^\prime,i}\}$\\
\item $m_{p+1,i}^\prime\leq m_{p,i}^\prime-n_{p,i}\quad\text{if }n_{p+1,i}=n_{p,i}$
\end{enumerate}
for $1\leq p\leq r_i^{(1)}$.
Since the map $e_{\alpha_1}$ is injective, it implies that $\Delta_c^T(\lambda_1,-z)_s^d\pi_{\mathcal{R}}bv_0$ is obtained from the projection of a vector $b^\prime v_0\in\mathcal{B}_W$ which less than $bv_0$ with our linear order.

We can continue to apply this trick for $n_{2,1},\cdots,n_{r_1^{(1)},1}$ and $i=2,\cdots,l$.
Then the image of $bv_0$ is also obtained by the projection of a vector in $\mathcal{B}_W$ which less than that one.

\begin{theorem}\label{basis}
The set $\mathcal{B}_W$ is a basis of $W(k\La_0)$.
\end{theorem}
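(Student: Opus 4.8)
The plan is to combine the two halves of the argument that are already in place. Proposition~\ref{span} gives that $\mathcal{B}_W$ spans $W(k\La_0)$, so only linear independence remains, and I would prove it by the triangularity method of Georgiev \cite{G} and Butorac--Sadowski \cite{BS}, in the form adapted to the twisted setting in the discussion preceding the theorem. Suppose $\sum_{b\in B_W}c_b\,bv_0=0$ is a finite linear relation with not all $c_b$ zero. First I would reduce to the case in which all monomials $b$ occurring in the relation have a common color-type: monomials of different color-type yield vectors lying in distinct $\heh_{(0)}$-weight spaces of $(V_L^T)^{\ot k}$, so the relation splits according to color-type. Next, fixing a dual-charge-type $\mathcal{R}$ and applying the Georgiev-type projection $\pi_{\mathcal{R}}$, together with the vanishing $x_{2\alpha}^{\hat{\nu}}(z)=0$ on $V_L^T$ used to organise the distribution of charges among the $k$ tensor slots, one sees that $\pi_{\mathcal{R}}(bv_0)$ is triangular with respect to the order on charge-types; this lets me further reduce to a relation among monomials all of one fixed charge-type, hence differing only in their energy-type $\mathcal{E}$.

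For such a relation I would induct on the total charge ${\rm chg}\,b$, the base case ${\rm chg}\,b=0$ being trivial since then $W(k\La_0)$ is spanned by $v_0$ alone. Let $b_0$ be the monomial minimal in the order ``$<$'' among those with $c_{b_0}\neq0$. Running the peeling procedure carried out just before the theorem --- choosing $s=n_{1,1}$ and $d=-2m_{1,1}-s$ from the energy-type of $b_0$ and applying $\Delta_c^T(\lambda_1,-z)_s^{\,d}$, then iterating over $n_{2,1},\ldots,n_{r_1^{(1)},1}$ and over the colors $i=2,\ldots,l$ --- each step strips off one quasi-particle together with a factor $e_{\alpha_1}\ot\cdots\ot e_{\alpha_1}$ (which is injective by (\ref{exe}), so it can be cancelled) and passes to the principal subspace attached to the dual-charge-type $\mathcal{R}^-$ obtained by deleting one box. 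By (\ref{m})--(\ref{m3}) the shifted energies of the reduced monomial again satisfy the conditions (C1)--(C3), so the reduced vector lies in the corresponding $\mathcal{B}_W$; by (\ref{process}) the same chain of operators applied to any other $b$ in the relation either annihilates it or sends it to the projection of a monomial strictly larger than the reduced $b_0$. Thus the whole relation is pushed forward to a nontrivial linear relation in a principal subspace of strictly smaller total charge, whose monomials are still linearly independent by the inductive hypothesis; this forces $c_{b_0}=0$, a contradiction. Hence all $c_b$ vanish, and, together with Proposition~\ref{span}, $\mathcal{B}_W$ is a basis.

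The hard part is making the two triangular reductions fully rigorous and compatible with one another. One must show precisely that $\pi_{\mathcal{R}}(bv_0)$ vanishes unless the dual-charge-type of $b$ is appropriately bounded by $\mathcal{R}$, which rests on the combinatorics of splitting each $x_{n\alpha_i}^{\hat{\nu}}(z)$ as a sum of products of the $Y^{\hat{\nu}}(\iota(e_{\alpha_i}),z)$ across the tensor factors and on $x_{2\alpha}^{\hat{\nu}}(z)=0$, exactly as in \cite{G,BS}. More delicately, after each application of a $\Delta_c^T(\lambda_j,-z)_s$-chain one must verify that the new half-integer energy shifts keep the reduced monomial inside $\mathcal{B}_W$ --- the bookkeeping of the conditions (C1)--(C3) with their $\min\{n_{p,i},n_{q,i-1}\}$ terms, of which (\ref{m})--(\ref{m3}) and the two displayed inequalities following them are the first instances --- and that distinct monomials in the relation remain distinct and correctly ordered after peeling, so that the inductive hypothesis genuinely applies. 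Granting these points, the argument closes along the lines of \cite[Theorem~4.1]{G} and the analogous theorems of \cite{BS,OT}.
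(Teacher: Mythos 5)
Your proposal is correct and follows essentially the same route as the paper: Proposition~\ref{span} for spanning, then the Georgiev-type projection $\pi_{\mathcal{R}}$ attached to the minimal charge-type to kill all monomials of larger charge-type, followed by the $\Delta_c^T(\lambda_j,-z)_s$ peeling applied to the smallest surviving monomial, using injectivity of $e_{\alpha_1}$ and the triangularity (\ref{process}) to conclude that its coefficient vanishes and iterating. The only cosmetic difference is that you package the iteration as an induction on total charge, whereas the paper argues directly that all larger monomials are annihilated at each peeling step; the technical points you flag (vanishing of the projection and preservation of (C1)--(C3) under the shifts (\ref{m})--(\ref{m3})) are exactly the ones the paper defers to \cite{G,BS}.
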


\begin{proof}
We should prove the linear independence of $\mathcal{B}_W$.
We consider a linear combination of vectors in $\mathcal{B}_W$,
\begin{align}\label{LC}
\sum_{a\in A}c_ab_av_0=0
\end{align}
for a finite set $A$.
Let $b$ be a quasi-particle in (\ref{LC}) with the minimal charge-type $\mathcal{R}^\prime$.
The charge-type $\mathcal{R}^\prime$ determines the dual-charge-type $\mathcal{R}$ and the projection $\pi_{\mathcal{R}}$.
For a quasi-particle $b_{\overline{a}}$ with the charge-type $\overline{\mathcal{R}^\prime}$ greater than $\mathcal{R}^\prime$, we have $\pi_{\mathcal{R}}b_{\overline{a}}v_0=0$ from the definition of the projection.
Therefore we have
\[\sum_{a\in A^\prime}\pi_{\mathcal{R}}c_ab_av_0=0,\]
where $A^\prime$ is a subset of $A$ such that $b_a$ has the charge-type $\mathcal{R}^\prime$ for $a\in A^\prime$.
Namely, all vectors in $\{b_av_0\mid a\in A^\prime\}$ have the same color-charge-type.
We assume that $\overline{b}v_0$ is the smallest vetor in $\{b_av_0\mid a\in A^\prime\}$. 
By applying the above trick to $\overline{b}v_0$, we can reduce the component operators from $\overline{b}v_0$ one by one.
Note that all monomial vectors $b_av_0$ such that $\overline{b}<b_a$ will be annihilated in this step.
Thus we have that the coefficient $\overline{c}$ of $\overline{b}$ is equal to zero.
Continuing the process, we can show that all coefficients are zero.
\end{proof}

\section{bases of standard modules}

\subsection{Spanning sets for standard modules}
To obtain a basis of the standard module, we introduce the following lemma.
Since relations between twisted vertex operators also holds, the proof is parallel to that of Lemma 5 of \cite{OT}.
\begin{lemma}\label{span2}
\phantom{a}
\begin{enumerate}
\item$L(k\La_0)=U(\hat{\heh}[\nu]^-)QW(k\La_0)$\\
\item$L(k\La_0)=QW(k\La_0)$
\end{enumerate}
\end{lemma}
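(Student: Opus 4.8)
The plan is to prove (1) by a PBW argument combined with the $\calZ$-algebra-type relation of Lemma \ref{TVO formula}, and then to deduce (2) from (1) by absorbing the Heisenberg factor, again via Lemma \ref{TVO formula}. I would first fix notation, writing $\bar\enu_-[\nu]=\bigoplus_{m\in\frac14\Z}(\enu_-)_{(4m)}\ot t^m$ and $\bar\enu_{-\alpha_i}[\nu]=\bigoplus_{m\in\frac14\Z}(\C x_{-\alpha_i})_{(4m)}\ot t^m$ for the negative-root analogues of $\bar\enu[\nu]$ and $\bar\enu_{\alpha_i}[\nu]$, where $\enu_-=\bigoplus_{\alpha\in\Delta_+}\C x_{-\alpha}$. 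For (1): since $d$ acts semisimply and $c$ as the scalar $k$, we have $L(k\La_0)=U(\hat\geh[\nu])v_0$; as $\nu$ preserves $\geh=\enu_-\op\heh\op\enu_+$, there is a triangular decomposition $\hat\geh[\nu]=\bar\enu_-[\nu]\op\hat\heh[\nu]\op\bar\enu_+[\nu]$ with $\bar\enu_+[\nu]=\bar\enu[\nu]$ and $\C c\subset\hat\heh[\nu]$. By PBW, $L(k\La_0)=U(\bar\enu_-[\nu])\,U(\hat\heh[\nu])\,W(k\La_0)$; since $\hat\heh[\nu]^{\ge0}$ (the nonnegative modes together with $c$) normalizes $\bar\enu_+[\nu]$ and annihilates $v_0$ up to the scalar $k$ — using $\pair{k\La_0,\heh_{(0)}}=0$ — it fixes $W(k\La_0)$, so $L(k\La_0)=U(\bar\enu_-[\nu])\,U(\hat\heh[\nu]^-)\,W(k\La_0)=U(\hat\heh[\nu]^-)\,U(\bar\enu_-[\nu])\,W(k\La_0)$, the last equality because $\bar\enu_-[\nu]\op\hat\heh[\nu]^-$ is a subalgebra (central terms vanish for negative modes).

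The heart of (1) is then to show $U(\bar\enu_-[\nu])\,W(k\La_0)\subseteq U(\hat\heh[\nu]^-)\,Q\,W(k\La_0)$. First I would record that $Q\,W(k\La_0)$ is stable under $\bar\enu_+[\nu]$, under $\hat\heh[\nu]^{\ge0}$, and under every $e_\beta$ ($\beta\in Q$): this is immediate from $\bar\enu_+[\nu]\,W(k\La_0)\subseteq W(k\La_0)$, the relations (\ref{exe2}) and (\ref{ehme2}), and $e_\beta e_\gamma\in\C^\times e_{\beta+\gamma}$. Next, exactly as in Lemma \ref{ordered set} (i.e. \cite[Lemma 3.1]{G}), nonsimple lowering currents are expressible through products of the simple ones, so the action of $U(\bar\enu_-[\nu])$ is spanned by ordered monomials from $U(\bar\enu_{-\alpha_l}[\nu])\cdots U(\bar\enu_{-\alpha_1}[\nu])$. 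Now I would invoke the $p=k-1,\,q=1$ case of Lemma \ref{TVO formula}: solving (\ref{TVO formula2}) for $z\tilde x_{-\alpha_i}^{\hat\nu}(z)$ exhibits each mode $x_{-\alpha_i}^{\hat\nu}(m)$ as a coefficient of the \emph{finite} expression $E^-(\alpha_i,z)(z\tilde x_{\alpha_i}^{\hat\nu}(z))^{k-1}E^+(\alpha_i,z)\,z^{-(\alpha_i)_{(0)}-k\pair{(\alpha_i)_{(0)},(\alpha_i)_{(0)}}/2}e_{\alpha_i}^{-1}$, whose constituents — coefficients of $E^-(\alpha_i,z)$ in $U(\hat\heh[\nu]^-)$; modes of $\tilde x_{\alpha_i}^{\hat\nu}$ in $\bar\enu_+[\nu]$ by Theorem \ref{rep}; coefficients of $E^+(\alpha_i,z)$ in $U(\hat\heh[\nu]^{\ge0})$; the powers of $z$; and $e_{\alpha_i}^{-1}$ with $-\alpha_i\in Q$ — all carry $Q\,W(k\La_0)$ into $U(\hat\heh[\nu]^-)\,Q\,W(k\La_0)$ by the stability just noted. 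So $x_{-\alpha_i}^{\hat\nu}(m)\,Q\,W(k\La_0)\subseteq U(\hat\heh[\nu]^-)\,Q\,W(k\La_0)$; commuting further color-$i$ modes past Heisenberg modes produces only shorter Heisenberg monomials and color-$i$ modes, so induction on the number of factors gives $U(\bar\enu_{-\alpha_i}[\nu])\,Q\,W(k\La_0)\subseteq U(\hat\heh[\nu]^-)\,Q\,W(k\La_0)$. Peeling off the colors from the innermost ($i=1$) outward — commuting $\bar\enu_-[\nu]$-factors past $U(\hat\heh[\nu]^-)$ freely, as their span is a subalgebra — yields $U(\bar\enu_-[\nu])\,W(k\La_0)\subseteq U(\hat\heh[\nu]^-)\,Q\,W(k\La_0)$, hence $L(k\La_0)=U(\hat\heh[\nu]^-)\,Q\,W(k\La_0)$.

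For (2): by (1) it suffices to prove $U(\hat\heh[\nu]^-)\,Q\,W(k\La_0)\subseteq Q\,W(k\La_0)$. Since $\hat\heh[\nu]^-$ commutes with every $e_\beta$ by (\ref{ehme2}), since $[\alpha^{\hat\nu}(-n),u]\in U(\bar\enu_+[\nu])$ for $u\in U(\bar\enu_+[\nu])$, and since $\bar\enu_+[\nu]$ preserves $Q\,W(k\La_0)$, this reduces — iterating — to $\hat\heh[\nu]^-v_0\subseteq Q\,W(k\La_0)$; and because $\{(\alpha_i)_{(4m)}\}_i$ spans $\heh_{(4m)}$, it is enough to handle $\alpha_i^{\hat\nu}(-n)v_0$ for simple roots $\alpha_i$. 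Here the $p=k,\,q=0$ case of Lemma \ref{TVO formula} reads $\tfrac1{k!}E^-(\alpha_i,z)(z\tilde x_{\alpha_i}^{\hat\nu}(z))^kE^+(\alpha_i,z)=e_{\alpha_i}z^{(\alpha_i)_{(0)}+k\pair{(\alpha_i)_{(0)},(\alpha_i)_{(0)}}/2}$; applying it to $v_0$ (with $E^+(\alpha_i,z)v_0=v_0$, $(\alpha_i)_{(0)}v_0=0$), multiplying on the left by $E^-(\alpha_i,z)^{-1}=E^-(-\alpha_i,z)$, and rearranging gives $E^-(-\alpha_i,z)\,e_{\alpha_i}v_0=\tfrac1{k!}z^{-k\pair{(\alpha_i)_{(0)},(\alpha_i)_{(0)}}/2}(z\tilde x_{\alpha_i}^{\hat\nu}(z))^kv_0$. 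Since the modes of $\tilde x_{\alpha_i}^{\hat\nu}$ lie in $\bar\enu[\nu]$ (Theorem \ref{rep}), the right-hand side has all coefficients in $W(k\La_0)$; applying $e_{\alpha_i}^{-1}$ coefficientwise — it commutes with $E^-(-\alpha_i,z)$ and satisfies $e_{\alpha_i}^{-1}W(k\La_0)\subseteq Q\,W(k\La_0)$ — shows that every coefficient of $E^-(-\alpha_i,z)v_0=\exp\!\big(-\sum_{m<0}\tfrac{\alpha_i^{\hat\nu}(m)}{m}z^{-m}\big)v_0$ lies in $Q\,W(k\La_0)$. By upper-triangularity $U(\hat\heh[\nu]^{-}_{\C\alpha_i})v_0\subseteq Q\,W(k\La_0)$, in particular $\alpha_i^{\hat\nu}(-n)v_0\in Q\,W(k\La_0)$; hence $\hat\heh[\nu]^-v_0\subseteq Q\,W(k\La_0)$ and $L(k\La_0)=Q\,W(k\La_0)$.

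The step I expect to be most delicate is the absorption of $U(\bar\enu_-[\nu])$ in (1): reducing its action to ordered monomials in the simple-root lowering modes (the twisted counterpart of \cite[Lemma 3.1]{G}), and verifying that every extraction from Lemma \ref{TVO formula}, once all factors are commuted past the translation operators $e_\beta$, genuinely lands inside $U(\hat\heh[\nu]^-)\,Q\,W(k\La_0)$. This bookkeeping is precisely the one performed in \cite[Lemma 5]{OT}, to which the present argument runs parallel.
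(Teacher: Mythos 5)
Your proposal is correct and follows essentially the same route the paper takes: it is the argument of \cite[Lemma 5]{OT} (itself modelled on \cite[Lemmas 1.1--1.2]{BKP}), namely PBW with the triangular decomposition of $\hat{\geh}[\nu]$ followed by the $p=k-1,q=1$ and $p=k,q=0$ cases of Lemma \ref{TVO formula} to absorb the negative-root currents and the negative Heisenberg modes into $QW(k\La_0)$. The paper itself only cites that proof as "parallel," and your write-up supplies exactly the bookkeeping it defers to.
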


From Lemma \ref{span2}, we know that the set $Q\mathcal{B}_W$ spans $L(k\La_0)$, but it is not its basis.
For example, we recall that the following relation holds
\[x_{\alpha_1}^{\hat{\nu}}\left(-\frac{1}{2}\right)\cdot v_0=\frac{1}{2}e_{\alpha_1}\cdot v_0\]
on $L(\La_0)$.
To get a canonical basis, we consider the following set.
\[B_H=\left\{h_{\alpha_l}\cdots h_{\alpha_1}\left|
\begin{array}{l}
 h_{\alpha_i}=\alpha_i^{\hat{\nu}}(-m_{t_i,i})^{n_{t_i,i}}\cdots\alpha_i^{\hat{\nu}}(-m_{1,i})^{n_{1,i}},i=1,\ldots,l,\\
 t_i\in\Z_{\geq0},m_{t_i,i}>\cdots>m_{1,i},m_{p,i}\in\frac{1}{2}\Z_+,n_{p,i}\in\mathbb{N}
 \end{array}\right.\right\}\]
 We recall that $\alpha^{\hat{\nu}}(m)$ is given by $\alpha_{(4m)}\ot t^m$.
 Since we have
 \[\heh_{(1)}=\heh_{(3)}=\{0\}\]
 for our automorphism $\nu$, we have $\alpha^{\hat{\nu}}(m)=0$ unless $m\in\frac{1}{2}\Z$.
 Then we introduce the linear order on $B_H$.
 A element of $B_H$ has a datum $(n_{t_l,l},\ldots,n_{1,l};\cdots;n_{t_1,1},\ldots,n_{1,1})$ or $(m_{t_l,l},\ldots,m_{1,l};\cdots,;m_{t_1,1},\ldots,m_{1,1})$.
 We can define the order $"<"$ for such datum in the same way as the charge-type $\mathcal{R}^\prime$.
 For two elements $h=h_{\alpha_l}\cdots h_{\alpha_1}$, $\overline{h}=\overline{h}_{\alpha_l}\cdots\overline{h}_{\alpha_1}\in B_H$ of fixed degree, we write $h<\overline{h}$ if
 \begin{enumerate}
 \item$(n_{t_l,l},\ldots,n_{1,1})<(\overline{n}_{\overline{t}_l,l},\ldots,\overline{n}_{1,1})$ or\\
 \item$(n_{t_l,l},\ldots,n_{1,1})=(\overline{n}_{\overline{t}_l,l},\ldots,\overline{n}_{1,1})$ and $(m_{t_l,l},\ldots,m_{1,1})<(\overline{m}_{\overline{t}_l,l},\ldots,\overline{m}_{1,1})$.
 \end{enumerate}
 By combining the order defined in Section 3.1, we generalize this order to the set
 \begin{align}\label{set}
 \{e_\mu hbv_0\mid\mu\in Q,h\in B_H,b\in M_{QP}^\prime\}
 \end{align}
where $M_{QP}^\prime$ is the subset of $M_{QP}$ with no $x_{k\alpha_i}^{\hat{\nu}}(m)$ for $i=1,\ldots,l$.
 For two such vectors $e_\mu hbv_0$, $e_{\bar{\mu}}\bar{h}\bar{b}v_0$ in (\ref{set}) of fixed degree and $\heh_{(0)}$-weight, we denote the color-type of $b$, $\bar{b}$ by $\mathcal{C}$, $\overline{\calC}$ respectively.
 Then we write $e_\mu hbv_0<e_{\bar{\mu}}\bar{h}\bar{b}v_0$ if one of the following conditions holds.
 \begin{enumerate}
 \item${\rm chg}\ b>{\rm chg}\ \bar{b}$,\\
 \item${\rm chg}\ b={\rm chg}\ \bar{b}$ and $\mathcal{C}<\overline{\mathcal{C}}$,\\
 \item$\mathcal{C}=\overline{\mathcal{C}}$ and ${\rm en}\ b<{\rm en}\ \bar{b}$,\\
 \item$\mathcal{C}=\overline{\mathcal{C}}$, ${\rm en}\ b={\rm en}\ \bar{b}$ and $b<\bar{b}$,\\
 \item$b=\bar{b}$ and $h<\bar{h}$.
 \end{enumerate}
 %where the charge and the energy of (\ref{QP}) are defined by
 %\[{\rm chg}\ b=\sum_{i=1}^lr_i,\quad{\rm en}\ b=\sum_{i=1}^l\sum_{p=1}^{r_i^{(1)}}m_{p,i}.\]
Note that $M_{QP}$ is upper bounded with respect to this order.
By induction in our order $"<"$, the following proposition is proved in the same way as Proposition 6 in \cite{OT} (cf. \cite[Lemma 2.3]{BKP}).

\begin{proposition}
The set $\mathcal{B}_L=\{e_\mu hbv_0\mid\mu\in Q,h\in B_H,b\in B_W\cap M_{QP}^\prime\}$ spans $L(k\La_0)$.
\end{proposition}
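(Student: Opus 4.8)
The plan is to follow the structure of \cite[Proposition 6]{OT}, arguing by induction on the linear order ``$<$'' defined on the set (\ref{set}). The starting point is Lemma \ref{span2}(2), which gives $L(k\La_0)=QW(k\La_0)$, together with Theorem \ref{basis}, which tells us that $W(k\La_0)$ is spanned by vectors $bv_0$ with $b\in B_W$. Hence $L(k\La_0)$ is spanned by $\{e_\mu bv_0\mid\mu\in Q,\ b\in B_W\}$. The task is twofold: first, to replace an arbitrary $b\in B_W$ by one lying in $M_{QP}'$ (i.e.\ with no charge-$k$ quasi-particle $x_{k\alpha_i}^{\hat\nu}(m)$), paying for the removed ``maximal-charge'' pieces with Heisenberg elements from $B_H$ and lattice elements $e_\mu$; and second, to show that the resulting spanning set, built from the three pieces $e_\mu$, $h\in B_H$, $b\in B_W\cap M_{QP}'$, is closed under the relations so that every monomial not already of this form is a combination of strictly smaller ones.

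The key mechanism is the identity (\ref{TVO formula3}) of Lemma \ref{TVO formula}, which trades a product of $k$ copies of the positive-root twisted vertex operator $\tilde x_\alpha^{\hat\nu}(z)$ for the corresponding operator with $-\alpha$, times a lattice factor $e_\alpha z^{\alpha_{(0)}+k\pair{\alpha_{(0)},\alpha_{(0)}}/2}$, conjugated by the exponentials $E^\pm(\alpha,z)$. Extracting coefficients from this identity, together with the commutation relations (\ref{Eh}), (\ref{Ecom}) of $E^\pm$ with the Heisenberg modes $h^{\hat\nu}(m)$ and with each other, lets one rewrite a charge-$k$ quasi-particle $x_{k\alpha_i}^{\hat\nu}(m)$ acting on a vector as a sum of terms each of which is a product of Heisenberg modes $\alpha_i^{\hat\nu}(-m')$ (to be organized into an element of $B_H$) and a factor $e_{\alpha_i}$ (to be absorbed into $e_\mu$), applied to vectors of strictly smaller charge; the commutation relations (\ref{ehme2})--(\ref{exe2}) move the $e_\mu$ past everything to the left. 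Since $M_{QP}$ is upper bounded in the order ``$<$'' for fixed degree and $\heh_{(0)}$-weight, and since each use of this reduction strictly decreases the charge-type (condition (1) of the order on (\ref{set})), the process terminates.

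After the charge-$k$ parts have been eliminated, one still has to bring the remaining quasi-particle factor into $B_W$, i.e.\ impose conditions $(C1)$--$(C3)$; this is exactly Proposition \ref{span}, applied to the residual $M_{QP}'$-part, and it only moves vectors downward with respect to the order (it raises energies and reorders within a fixed charge-type, which are steps (3) and (4) of the order on (\ref{set})). Finally one normalizes the Heisenberg factor: the modes $\alpha_i^{\hat\nu}(-m_{p,i})$ commute up to scalars (only $\heh_{(0)}$ and $\heh_{(2)}$ survive, and $[\alpha^{\hat\nu}(m),\beta^{\hat\nu}(n)]=\pair{\alpha,\beta}m\delta_{m+n,0}c$), so any product of them is, up to lower-order corrections and scalars, a strictly decreasing product as required in the definition of $B_H$; this yields step (5) of the order. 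Assembling these reductions, every spanning vector $e_\mu bv_0$ with $b\in B_W$ is shown to be a linear combination of vectors $e_\mu h b v_0$ with $\mu\in Q$, $h\in B_H$, $b\in B_W\cap M_{QP}'$, which is the claim.

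The main obstacle I expect is bookkeeping rather than conceptual: one must check that after pulling out $e_{\alpha_i}$ and the Heisenberg modes via (\ref{TVO formula3}), the leftover quasi-particle monomial genuinely has strictly smaller charge-type (so the induction is well-founded) and that the order on (\ref{set}) has been set up so that all the intermediate rewrites — charge reduction, the Proposition \ref{span} reductions, and Heisenberg reordering — are monotone decreasing. Care is also needed because the renormalization $\tilde x_\alpha^{\hat\nu}(z)=4\sigma(\alpha)^{-1}x_\alpha^{\hat\nu}(z)$ and the identification of $\hat L$ with $\hat L_\nu$ via (\ref{cocycle}) introduce $\zeta$-power and $2$-power scalars that must be tracked but never vanish, so they do not affect spanning. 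Since all of these verifications are formally identical to those in \cite{OT} once the twisted analogues of the vertex-operator relations (already recorded in Lemmas \ref{TVO formula}--\ref{component rel3}) are in hand, the proof reduces to citing that argument; I would simply indicate the dictionary between the untwisted modes there and the twisted modes $x_{n\alpha_i}^{\hat\nu}(m)$, $\alpha_i^{\hat\nu}(m)$ here.
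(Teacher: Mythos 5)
Your proposal is correct and follows essentially the same route as the paper, which itself only cites the argument of \cite[Proposition 6]{OT} (cf.\ \cite[Lemma 2.3]{BKP}): start from $L(k\La_0)=QW(k\La_0)$ and the spanning set of $W(k\La_0)$, eliminate the charge-$k$ quasi-particles via the $p=k$, $q=0$ case of Lemma \ref{TVO formula} (producing Heisenberg modes and $e_{\alpha_i}$ factors), and induct on the linear order on the set (\ref{set}). You correctly identify the key identity and the well-foundedness of the reduction, so no further comment is needed.
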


\subsection{Combinatorial bases of the standard module}
Consider the decomposition
\[L(k\La_0)=\bigoplus_{s\in\Z}L(k\La_0)_s,\quad\text{where }L(k\La_0)_s=\bigoplus_{s_2,\cdots,s_l\in\Z}L(k\La_0)_{s_l\alpha_l+\cdots+s_2\alpha_2+s\alpha_1}.\]
From this decomposition we have the Georgiev-type projection 
\[\pi_{\mathcal{R}_{\alpha_1}}:L(k\La_0)\rightarrow L(\La_0)_{r_1^{(1)}}\ot\cdots\ot L(\La_0)_{r_1^{(k)}}\]
for a fixed dual-charge-type $\mathcal{R}_{\alpha_1}=(r_1^{(1)},r_1^{(2)},\ldots,r_1^{(k)})$ for the color 1 and $r_1=\sum_{s=1}^kr_1^{(s)}$.
We also have the decomposition and the Georgiev-type projection $\pi_{\mathcal{R}_{\alpha_i}}$ for the color $i=2,\ldots,l$.
We will use these projections to prove Theorem \ref{basis2}.
These projections are naturally generalized to
\[L(k\La_0)[[w_{t_l,l}^\frac{1}{2},\ldots,w_{1,1}^\frac{1}{2},z_{r_l^{(1)},l}^\frac{1}{4},\ldots,z_{1,1}^\frac{1}{4}]]\]
and denoted by $\pi_{\mathcal{R}_{\alpha_i}}$.
Set $\alpha^{\hat{\nu}}(w)_-=\sum_{m<0}\alpha^{\hat{\nu}}(m)w^{-m-1}$.
We consider the vector
\[e_\mu\alpha_l^{\hat{\nu}}(-m_{t_l,l}^\prime)^{n_{t_l,l}^\prime}\cdots\alpha_1^{\hat{\nu}}(-m_{1,1}^\prime)^{n_{1,1}^\prime}x_{n_{r_l^{(1)},l}\alpha_l}^{\hat{\nu}}(m_{r_l^{(1)},l})\cdots x_{n_{1,1}\alpha_1}^{\hat{\nu}}(m_{1,1})v_0\]
with dual-charge-type $\mathcal{R}=(\mathcal{R}_{\alpha_l},\ldots,\mathcal{R}_{\alpha_1})$.
We know that the image of this vector with respect to $\pi_{\mathcal{R}_{\alpha_i}}$ is obtained by the coefficient of the corresponding projection of the generating function
\[e_\mu\alpha_l^{\hat{\nu}}(w_{t_l,l})_-^{n_{t_l,l}^\prime}\cdots\alpha_1^{\hat{\nu}}(w_{1,1})_-^{n_{1,1}^\prime}x_{n_{r_l^{(1)},l}\alpha_l}^{\hat{\nu}}(z_{r_l^{(1)},l})\cdots x_{n_{1,1}\alpha_1}^{\hat{\nu}}(z_{1,1})v_0.\]
In \cite{OT}, we defined the generalized twisted vertex operator for an elements of an extension of the weight lattice $P$ of $\geh$ to prove the linear independence of $\mathcal{B}_L$.
In this paper, we continue to use the operator $\Delta_c^T$ to prove the following theorem.

\begin{theorem}\label{basis2}
The set $\mathcal{B}_L$ is a basis of $L(k\La_0)$.
\end{theorem}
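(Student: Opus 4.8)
The plan is to prove that $\mathcal{B}_L$ is a basis by establishing linear independence, since the spanning property was already shown in the previous proposition. I would argue by a triple induction mirroring the structure of the order defined on the set (\ref{set}): outermost on the total charge ${\rm chg}\ b$, then on the color-type $\mathcal{C}$ and energy ${\rm en}\ b$, and innermost on the Heisenberg datum and the lattice element $\mu$. Suppose a finite linear combination $\sum_{a\in A}c_a e_{\mu_a}h_a b_a v_0 = 0$. First, since each $e_{\mu_a}$ is a bijection on $L(k\La_0)$ commuting appropriately with the grading (via (\ref{ede2})--(\ref{exe2})), and since the vectors $e_\mu h b v_0$ of a fixed degree and $\heh_{(0)}$-weight involve only finitely many $\mu$, I would first reduce to a fixed $\mu$: distinct $\mu$ give vectors in different weight spaces once the Heisenberg and quasi-particle parts are fixed, so after grouping by weight we may assume all $\mu_a$ equal and cancel $e_\mu$.

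Next I would apply the Georgiev-type projections $\pi_{\mathcal{R}_{\alpha_i}}$ for $i=1,\ldots,l$ simultaneously (i.e. the composite $\pi_{\mathcal{R}}$), where $\mathcal{R}$ is the dual-charge-type of a quasi-particle monomial $b$ of minimal charge-type appearing in the relation. As in the proof of Theorem \ref{basis}, any $b_a$ whose charge-type exceeds $\mathcal{R}'$ is killed by $\pi_{\mathcal{R}}$, so only the terms with that exact color-charge-type survive. Among those, I would isolate the smallest vector $e_\mu \overline{h}\,\overline{b}\,v_0$ in our order and run the $\Delta_c^T(\lambda_j,-z)$ reduction trick developed before Theorem \ref{basis}: repeatedly applying $\Delta_c^T(\lambda_j,-z)_s^d$ strips off quasi-particles one color at a time, annihilating every strictly larger monomial (those with $\overline{b}<b_a$) while sending the projection of $\overline{b}v_0$ to the projection of a genuinely smaller basis vector of the principal subspace. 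The key new point beyond Theorem \ref{basis} is that the Heisenberg factors $h_a = \alpha^{\hat\nu}(-m)^n\cdots$ must be handled: here I would use (\ref{Delta2}), which says $\Delta_c^T(\lambda_j,-z)$ commutes with all $\alpha^{\hat\nu}(m)\in\hat{\heh}[\nu]$, together with (\ref{Delta4}) for the commutation with $e_{\alpha_i}$ ($i\neq j$), so that the whole reduction passes through the Heisenberg part untouched; after the quasi-particle part is exhausted we are left with a linear combination, in a tensor product of basic modules, of Heisenberg monomials $h_a$ times copies of $e_{\alpha_i}$, and the linear independence there follows from the PBW theorem for $U(\hat{\heh}[\nu]^-)$ and the injectivity of the $e_{\alpha_i}$, exactly as in the proof of \cite[Proposition 6]{OT}.

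Combining these, I would conclude that the coefficient $\overline{c}$ of the minimal surviving vector vanishes, then delete it from the relation and descend in the order; iterating over all charge-types, color-types, energies, Heisenberg data and lattice elements forces every $c_a=0$. The main obstacle I anticipate is bookkeeping the interaction between the Heisenberg operators $h_a$ and the quasi-particle projections: one must check that applying $\pi_{\mathcal{R}_{\alpha_i}}$ and then the $\Delta_c^T$ operators does not introduce cross terms mixing different $h_a$, and that the order on $B_H$ is genuinely respected by the constant-term extraction. This is precisely where (\ref{Delta2}) and (\ref{Delta4}) are essential, and where the restriction to $M_{QP}'$ (no charge-$k$ quasi-particles, so that the relation $x_{(k+1)\alpha}^{\hat\nu}=0$ does not interfere with the charge-type induction) is used. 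Once that compatibility is verified, the rest is a routine transcription of the untwisted argument of \cite{OT,BKP} to the $\tfrac14\Z$-graded twisted setting, using that all the requisite commutation and bijectivity relations (\ref{ece})--(\ref{exe2}) hold on $L(k\La_0)$.
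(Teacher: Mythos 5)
Your overall architecture (spanning already established, Georgiev projections, the $\Delta_c^T(\lambda_j,-z)$ stripping trick, handling of the Heisenberg part via (\ref{Delta2}), (\ref{Delta4}) and PBW) matches the paper's proof, but the very first reduction --- ``we may assume all $\mu_a$ equal and cancel $e_\mu$'' --- has a genuine gap, and it is exactly the point where the new difficulty of this theorem relative to Theorem \ref{basis} lives. A linear dependence is taken inside a \emph{single} weight space of fixed degree and $\heh_{(0)}$-weight, and within such a space the lattice element $\mu$ is \emph{not} determined: since $e_\mu$ shifts the $\heh_{(0)}$-weight by $k\mu_{(0)}$ and a quasi-particle monomial of color-type $\mathcal{C}=(r_l,\ldots,r_1)$ shifts it by $\sum_i r_i(\alpha_i)_{(0)}$, the summands $e_\mu hbv_0$ and $e_{\bar\mu}\bar h\bar b v_0$ lie in the same weight space whenever $k\mu_{(0)}+\mathcal{C}_{(0)}=k\bar\mu_{(0)}+\overline{\mathcal{C}}_{(0)}$; for instance $\mu=0$ with $r_1=k$ (achieved by $k$ charge-one quasi-particles, which is allowed in $M_{QP}^\prime$) and $\bar\mu=\alpha_1$ with $\bar r_1=0$ give the same weight. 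Your parenthetical ``once the Heisenberg and quasi-particle parts are fixed'' is true but unusable, because in a general dependence relation those parts vary from summand to summand. So you cannot group by weight to isolate a single $\mu$, and the subsequent argument does not get off the ground as written.

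What the paper does instead is to exploit the restriction to $M_{QP}^\prime$ in a different way than you describe: since basis monomials contain no charge-$k$ quasi-particle of color $1$, their dual-charge-type has $r_1^{(k)}=0$, while $e_{\bar\mu}$ with positive $\alpha_1$-coordinate places strictly positive color-$1$ charge in \emph{every} tensor factor, including the $k$-th. After normalizing so that the summands with maximal color-$1$ charge have $\mu$ with vanishing $\alpha_1$-coordinate (using bijectivity of $e_\alpha$ from (\ref{ehe2})), the projection $\pi_{\mathcal{R}_{\alpha_1}}$ onto $L(\La_0)_{r_1^{(1)}}\ot\cdots\ot L(\La_0)_{r_1^{(k)}}$ with $r_1^{(k)}=0$ annihilates all summands carrying a nontrivial $e_{\alpha_1}$-translation, and this --- not a weight-space argument --- is what separates the different $\mu$. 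Your remark that $M_{QP}^\prime$ only serves to keep $x_{(k+1)\alpha}^{\hat\nu}=0$ from interfering with the charge-type induction misses this role; the exclusion of charge-$k$ quasi-particles is forced by identities such as $x_{\alpha_1}^{\hat\nu}(-\tfrac12)v_0=\tfrac12 e_{\alpha_1}v_0$ and is precisely what makes the projection argument work. Once this step is repaired, the remainder of your proposal (minimal monomial, $\Delta_c^T$ reduction color by color, commutation with the Heisenberg operators, PBW at the end) coincides with the paper's proof.
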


\begin{proof}
We prove the linear independence of $\mathcal{B}_L$.
Consider a linear combination of vectors in $\mathcal{B}_L$,
\begin{align}\label{LC2}
\sum c_{\mu,h,b}e_\mu hbv_0=0
\end{align}
of fixed degree and $\heh_{(0)}$-weight.
From (\ref{ehe2}), $e_\mu$ is the bijection such that it maps the weight space $V_\rho$ to $V_{\rho+k\alpha}$ for $\heh_{(0)}$-weight $\rho$. 
Hence we may assume that a summand in (\ref{LC2}) with the maximal charge of color 1, ${\rm chg}_1b=r_1$, has $\mu$ with $\alpha_1$ coordinate zero.
That is, we assume that summands in (\ref{LC2}) has the form
\begin{itemize}
\item[(A)]$e_\mu hbv_0$ with ${\rm chg}_1b=r_1$ and $\mu=c_l\alpha_l+\cdots+c_2\alpha_2$, or
\item[(B)]$e_{\bar{\mu}}\bar{h}\bar{b}v_0$ with ${\rm chg}_1\bar{b}<r_1$ and $\bar{\mu}=\bar{c}_l\alpha_l+\cdots+\bar{c}_1\alpha_1$, where $\bar{c}_1>0$.
\end{itemize}
Among the vectors $v=e_\mu hbv_0$ with ${\rm chg}_1b=r_1$, we choose a vector with the maximal charge-type $\mathcal{R}_{\alpha_1}^\prime$ and corresponding dual-charge-type
\[\mathcal{R}_{\alpha_1}=\left(r_1^{(1)},\ldots,r_1^{(k-1)},0\right)\]
for the color $i=1$, where $r_1=r_1^{(1)}+\cdots+r_1^{(k-1)}$.
Note that $r_1^{(k)}=0$ for a vector in $\mathcal{B}_L$.
Since the action of $e_\alpha$ on $L(k\La_0)$ is given by
\[e_\alpha\cdot v_0=e_\alpha{\bf 1}_T\ot\cdots\ot e_\alpha{\bf 1}_T,\]
we have
\[e_{\bar{\mu}}\bar{h}\bar{b}v_0\in\bigoplus_{\substack{s_1,\cdots,s_{k-1}\in\Z\\
s_k>0}}L(\La_0)_{s_1}\ot\cdots\ot L(\La_0)_{s_k}.\]
Therefore we have $\pi_{\mathcal{R}_{\alpha_1}}(e_{\bar{\mu}}\bar{h}\bar{b})v_0=0$ for the vector of the form (B).
After applying the projection $\pi_{\mathcal{R}_{\alpha_1}}$ to the sum (\ref{LC2}), we have only the summands of the form (A).
From (\ref{Delta2}), (\ref{Delta4}), we can apply $\Delta_c^T(\lambda_1,-z)$ to $\pi_{\mathcal{R}_{\alpha_1}}(e_\mu hb)v_0$ and it affect only $b$.
Now we choose the smallest  monomial $b$ in the summands of the form (A).
Using the same way to the proof of Theorem \ref{basis}, we can reduce the color 1 quasi-particles from $e_\mu hbv_0$ one by one.
Then we have the vector $e_\mu hb^\prime v_0$ with ${\rm chg}_1b^\prime=0$.
By applying the same trick for the color $i=2,\ldots,l$, we have $c_{\mu,h,b}=0$.
Continuing this process, we can show the linear independence of $\mathcal{B}_L$.
\end{proof}

\section{parafermionic bases}

In this section we define the parafermionic space as in \cite{BKP,OT}.
By the same argument in \cite{OT}, we obtain the parafermionic bases of the parafermionic space.

\subsection{Vacuum space and twisted $\mathcal{Z}$-operators}

We denote the vacuum space of the standard module $L(k\La_0)$ by $L(k\La_0)^{\hat{\heh}[\nu]^+}$.
That is we have
\begin{align}\label{vac sp}
L(k\La_0)^{\hat{\heh}[\nu]^+}=\{v\in L(k\La_0)\mid \hat{\heh}[\nu]^+\cdot v=0\}.
\end{align}
From the Lepowsky-Wilson theorem \cite{LW} (A5.3) we have canonical isomorphism of $d$-graded linear spaces
\begin{align}\label{LWthm}
U(\hat{\heh}[\nu]^-)\ot L(k\La_0)^{\hat{\heh}[\nu]^+}&\overset{\simeq}{\longrightarrow} L(k\La_0)\\
\nonumber h\ot u&\mapsto h\cdot u
\end{align}
where $U(\hat{\heh}[\nu]^-)\simeq{\rm Sym}(\hat{\heh}[\nu]^-)$ is the Fock space of level $k$ for the Heisenberg subalgebra $\hat{\heh}[\nu]_{\frac{1}{4}\Z}$ with the action of $c$ being the multiplication by scalar $k$.
This isomorphism gives the direct decomposition
\begin{align}\label{decomposition}
L(k\La_0)=L(k\La_0)^{\hat{\heh}[\nu]^+}\oplus \hat{\heh}[\nu]^-U(\hat{\heh}[\nu]^-)L(k\La_0)^{\hat{\heh}[\nu]^+}.
\end{align}
Then we obtain the projection
\[\pi^{\hat{\heh}[\nu]^+}:L(k\La_0)\rightarrow L(k\La_0)^{\hat{\heh}[\nu]^+}\]
from (\ref{decomposition}).
We define the $\mathcal{Z}$-operator by
\[\mathcal{Z}_{n\alpha}(z)=E^-(\alpha,z)^{n/k}x_{n\alpha}^{\hat{\nu}}(z)E^+(\alpha,z)^{n/k}\]
for a quasi-particle of charge $n$ and  a root $\alpha$.
Note that the action of $\mathcal{Z}$-operators commute with the action of the Heisenberg subalgebra $\hat{\heh}[\nu]_{\frac{1}{4}\Z}$ on the standard module $L(k\La_0)$.
More generally, we should define the $\mathcal{Z}$-operators for quasi-particles of charge-type $\mathcal{R}^\prime=(n_{r_l^{(1)},l},\ldots,n_{1,1})$.
For the twisted vertex operator $x_{\mathcal{R}^\prime}^{\hat{\nu}}(z_{r_l^{(1)},l},\ldots,z_{1,1})=x_{n_{r_l^{(1)},l}\alpha_l}^{\hat{\nu}}(z_{r_l^{(1)},l})\cdots x_{n_{1,1}\alpha_1}^{\hat{\nu}}(z_{1,1})$ of charge-type $\mathcal{R}^\prime$, we define
\begin{align}\label{Zop}
\nonumber\mathcal{Z}_{\mathcal{R}^\prime}(z_{r_l^{(1)},l},\ldots,z_{1,1})=&E^-(\alpha_l,z_{r_l^{(1)},l})^{n_{r_l^{(1)},l}/k}\cdots E^-(\alpha_1,z_{1,1})^{n_{1,1}/k}x_{\mathcal{R}^\prime}^{\hat{\nu}}(z_{r_l^{(1)},l},\ldots,z_{1,1})\\
&\times E^+(\alpha_l,z_{r_l^{(1)},l})^{n_{r_l^{(1)},l}/k}\cdots E^+(\alpha_1,z_{1,1})^{n_{1,1}/k}.
\end{align}
For convenience, we write this formal Laurent series by
\[\mathcal{Z}_{\mathcal{R}^\prime}(z_{r_l^{(1)},l},\ldots,z_{1,1})=\sum_{m_{r_l^{(1)},l},\ldots,m_{1,1}\in\frac{1}{4}\Z}\mathcal{Z}_{\mathcal{R}^\prime}(m_{r_l^{(1)},l},\ldots,m_{1,1})z_{r_l^{(1)},l}^{-m_{r_l^{(1)},l}-n_{r_l^{(1)},l}}\cdots z_{1,1}^{-m_{1,1}-n_{1,1}}.\]
Since $\mathcal{Z}$-operators are well-defined on vacuum space and we can express quasi-particle monomials in terms of $\mathcal{Z}$-operators by reversing (\ref{Zop}), we have
\[\pi^{\hat{\heh}[\nu]^+}x_{\mathcal{R}^\prime}^{\hat{\nu}}(z_{r_l^{(1)},l}\ldots,z_{1,1})v_0\mapsto\mathcal{Z}_{\mathcal{R}^\prime}(z_{r_l^{(1)},l},\ldots,z_{1,1})v_0.\]

Now, Theorem \ref{basis2} implies
\begin{theorem}\label{basis3}
The set of vectors
\[e_\mu\pi^{\hat{\heh}[\nu]^+}(b)v_0=e_\mu\mathcal{Z}_{\mathcal{R}^\prime}(m_{r_l^{(1)},l},\ldots,m_{1,1})v_0\]
such that $\mu\in Q$ and 
%the charge-type $\mathcal{R}^\prime$ and energy-type $(m_{r_l^{(1)},l},\ldots,m_{1,1})$ satisfy the conditions for $B_W\cap M_{QP}^\prime$
$b\in B_W\cap M_{QP}^\prime$ with the charge-type $\calR^\prime$ and the energy-type $(m_{r_l^{(1)},l},\ldots,m_{1,1})$ is a basis of the vacuum space $L(k\La_0)^{\hat{\heh}[\nu]^+}$.
\end{theorem}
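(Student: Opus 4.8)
The plan is to derive Theorem~\ref{basis3} from Theorem~\ref{basis2} by applying the projection $\pi^{\hat{\heh}[\nu]^+}$ onto the vacuum space to the basis $\mathcal{B}_L$ of $L(k\La_0)$, following \cite{OT} and, for the underlying $\mathcal{Z}$-algebra computation, \cite{LP, LW}. Two preliminary observations are used throughout. First, by (\ref{ehme2}) the operator $e_\mu$ commutes with $\hat{\heh}[\nu]^{+}$ and $\hat{\heh}[\nu]^{-}$, hence it preserves both summands of the decomposition (\ref{decomposition}), so $e_\mu\pi^{\hat{\heh}[\nu]^+}=\pi^{\hat{\heh}[\nu]^+}e_\mu$. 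Second, inverting (\ref{Zop}), using that $E^+(\alpha_i,z)^{n/k}$ fixes $v_0$ (because $\hat{\heh}[\nu]^{+}v_0=0$) and that all the $E^{\pm}$ factors commute, one obtains $bv_0=E^-(\alpha_l,z_{r_l^{(1)},l})^{-n_{r_l^{(1)},l}/k}\cdots E^-(\alpha_1,z_{1,1})^{-n_{1,1}/k}\,\mathcal{Z}_{\mathcal{R}^\prime}(z_{r_l^{(1)},l},\ldots,z_{1,1})v_0$; since each $E^-$ is an exponential of Heisenberg modes of negative energy with constant term $1$, and $\mathcal{Z}_{\mathcal{R}^\prime}(\ldots)v_0$ lies in $L(k\La_0)^{\hat{\heh}[\nu]^+}$ (the $\mathcal{Z}$-operators commute with $\hat{\heh}[\nu]_{\frac14\Z}$), extracting the coefficient for $b$ as in (\ref{QP}) gives $\pi^{\hat{\heh}[\nu]^+}(bv_0)=\mathcal{Z}_{\mathcal{R}^\prime}(m_{r_l^{(1)},l},\ldots,m_{1,1})v_0$.

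For the spanning assertion I apply $\pi^{\hat{\heh}[\nu]^+}$ to $\mathcal{B}_L=\{e_\mu hbv_0\}$. If $h\neq1$ then $e_\mu hbv_0=h\,(e_\mu bv_0)\in\hat{\heh}[\nu]^-U(\hat{\heh}[\nu]^-)L(k\La_0)$, which by the Lepowsky--Wilson isomorphism (\ref{LWthm}) equals $\hat{\heh}[\nu]^-U(\hat{\heh}[\nu]^-)L(k\La_0)^{\hat{\heh}[\nu]^+}$, so its image vanishes. If $h=1$ then $\pi^{\hat{\heh}[\nu]^+}(e_\mu bv_0)=e_\mu\pi^{\hat{\heh}[\nu]^+}(bv_0)=e_\mu\mathcal{Z}_{\mathcal{R}^\prime}(\ldots)v_0$. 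Since $\pi^{\hat{\heh}[\nu]^+}$ is surjective and $\mathcal{B}_L$ spans $L(k\La_0)$, the vectors $e_\mu\mathcal{Z}_{\mathcal{R}^\prime}(\ldots)v_0$ with $\mu\in Q$, $b\in B_W\cap M_{QP}^\prime$ span $L(k\La_0)^{\hat{\heh}[\nu]^+}$.

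For linear independence I use a character count. The set $B_H$ is a basis of $U(\hat{\heh}[\nu]^-)$: since $\heh_{(1)}=\heh_{(3)}=0$, the space $\hat{\heh}[\nu]^-$ is spanned by the pairwise commuting elements $\alpha_i^{\hat{\nu}}(-m)$ ($1\le i\le l$, $m\in\frac12\Z_+$), which span $\heh_{(0)}\ot t^{-m}$ when $m\in\Z$ and $\heh_{(2)}\ot t^{-m}$ when $m\in\frac12+\Z$, so the ordered monomials in $B_H$ form a PBW basis and $\sum_{h\in B_H}q^{{\rm deg}\,h}={\rm ch}\,U(\hat{\heh}[\nu]^-)$, where $q$ tracks the $d$-grading. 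Since $\mathcal{B}_L$ is a basis and every $h\in B_H$ has zero $\heh_{(0)}$-weight, ${\rm ch}\,L(k\La_0)=\big(\sum_{h\in B_H}q^{{\rm deg}\,h}\big)\sum_{\mu,b}q^{{\rm deg}(e_\mu bv_0)}e({\rm wt}(e_\mu bv_0))$; comparing with ${\rm ch}\,L(k\La_0)={\rm ch}\,U(\hat{\heh}[\nu]^-)\cdot{\rm ch}\,L(k\La_0)^{\hat{\heh}[\nu]^+}$ from (\ref{LWthm}) and cancelling the invertible factor ${\rm ch}\,U(\hat{\heh}[\nu]^-)$ gives ${\rm ch}\,L(k\La_0)^{\hat{\heh}[\nu]^+}=\sum_{\mu,b}q^{{\rm deg}(e_\mu bv_0)}e({\rm wt}(e_\mu bv_0))$. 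Because $\pi^{\hat{\heh}[\nu]^+}$ preserves the $d$-grading and the $\heh_{(0)}$-weight, $e_\mu\mathcal{Z}_{\mathcal{R}^\prime}(\ldots)v_0$ has the same bidegree as $e_\mu bv_0$, so in each bidegree the number of index pairs $(\mu,b)$ equals the dimension of the corresponding graded subspace of $L(k\La_0)^{\hat{\heh}[\nu]^+}$; since the corresponding vectors span that subspace, they must be linearly independent, which proves the claim.

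The only step requiring genuine care is the identification $\pi^{\hat{\heh}[\nu]^+}(bv_0)=\mathcal{Z}_{\mathcal{R}^\prime}(\ldots)v_0$ in the first paragraph: one must keep track of the fractional exponents in $E^{\pm}(\alpha_i,z)^{n/k}$ and verify that every omitted term of the $E^-$-expansion carries a Heisenberg factor of negative energy and therefore lies in $\ker\pi^{\hat{\heh}[\nu]^+}$. This is the twisted analogue of the $\mathcal{Z}$-algebra computation of Lepowsky--Primc and Lepowsky--Wilson and is carried out exactly as in \cite{OT}; granting it, everything else is a formal consequence of Theorem~\ref{basis2} and the Lepowsky--Wilson isomorphism.
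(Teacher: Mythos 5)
Your proposal is correct and follows essentially the same route as the paper, which simply defers to the argument of Theorem 3.1 of \cite{BKP}: identify $\pi^{\hat{\heh}[\nu]^+}(bv_0)$ with $\mathcal{Z}_{\mathcal{R}^\prime}(m_{r_l^{(1)},l},\ldots,m_{1,1})v_0$ by inverting (\ref{Zop}), and then transfer the basis $\mathcal{B}_L$ of Theorem \ref{basis2} through the Lepowsky--Wilson isomorphism (\ref{LWthm}). The only cosmetic difference is that you conclude linear independence by a graded dimension count after cancelling ${\rm ch}\,U(\hat{\heh}[\nu]^-)$, whereas the cited argument uses the unipotent-triangular relation between the vectors $h\,e_\mu\mathcal{Z}_{\mathcal{R}^\prime}(\ldots)v_0$ and the basis vectors $e_\mu hbv_0$; both are valid.
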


The proof is parallel to that of Theorem 3.1 of \cite{BKP}.

\subsection{Parafermionic space and parafermionic current}
We have the projective representation of $Q$ on $L(k\La_0)$.
This gives a diagonal action of the sublattice $kQ\subset Q$ by
\[k\alpha\mapsto\rho(k\alpha)=e_\alpha\ot\cdots\ot e_\alpha.\]
Note that this action satisfies $\rho(k\alpha):L(k\La_0)_\mu^{\hat{\heh}[\nu]^+}\rightarrow L(k\La_0)_{\mu+k\alpha}^{\hat{\heh}[\nu]^+}$ for a weight $\mu$.
We define the parafermionic space of the highest weight $k\La_0$ as the space of $kQ$-coinvariants in the $kQ$-module $L(k\La_0)^{\hat{\heh}[\nu]^+}$:
\begin{align}\label{para sp}
L(k\La_0)_{kQ}^{\hat{\heh}[\nu]^+}:=L(k\La_0)^{\hat{\heh}[\nu]^+}/{\rm Span}_{\C}\{(\rho(k\alpha)-1)\cdot v\mid\alpha\in Q,v\in L(k\La_0)^{\hat{\heh}[\nu]^+}\}.
\end{align}
We have the canonical projection
\[\pi_{kQ}^{\hat{\heh}[\nu]^+}:L(k\La_0)^{\hat{\heh}[\nu]^+}\rightarrow L(k\La_0)_{kQ}^{\hat{\heh}[\nu]^+}.\]
The composition $\pi_{kQ}^{\hat{\heh}[\nu]^+}\circ\pi^{\hat{\heh}[\nu]^+}$ is denoted by $\pi$.
Then we have
\[L(k\La_0)_{kQ}^{\hat{\heh}[\nu]^+}\simeq\bigoplus_{\mu\in k\La_0+Q/kQ}L(k\La_0)_\mu^{\hat{\heh}[\nu]^+}.\]

For every root $\beta$, we define the parafermionic current of charge $n$ by
\begin{align}\label{para c}
\Psi_{n\beta}^{\hat{\nu}}(z)=\mathcal{Z}_{n\beta}(z)z^{-n\beta_{(0)}/k}\ep_\beta^{-n/k},
\end{align}
where $\ep_\beta:L(k\La_0)\rightarrow\C^\times$ is given by
\[\ep_\beta u=C(\beta,\mu)u\quad\text{for }u\in L(k\La_0)_\mu.\]
The commutativity with the action of Heisenberg subalgebra of $\mathcal{Z}$-operator implies that the parafermionic current preserves the vacuum space $L(k\La_0)^{\hat{\heh}[\nu]^+}$.
We rewrite the commutation relation (\ref{exe2}) by
\[x_\beta^{\hat{\nu}}(z)e_\alpha=C(\alpha,\beta)^{-1}e_\alpha x_\beta^{\hat{\nu}}(z)z^{\pair{\alpha,\beta_{(0)}}}.\]
Using this relation and the one between $z^\mu$ and $e_\alpha$, we have
\[[\rho(k\alpha),\Psi_\beta^{\hat{\nu}}]=0,\]
where the map $\ep_\beta$ contribute to vanishing constant term $C(\alpha,\beta)$.
Therefore $\Psi_\beta^{\hat{\nu}}(z)$ is well-defined on the parafermionic space $L(k\La_0)_{kQ}^{\hat{\heh}[\nu]^+}$.
For a quasi-particle of charge-type $\mathcal{R}^\prime=(n_{r_l^{(1)},l},\ldots,n_{1,1})$, the parafermionic current of charge-type $\mathcal{R}^\prime$ is also defined by
\[\Psi_{\mathcal{R}^\prime}^{\hat{\nu}}(z_{r_l^{(1)},l},\ldots,z_{1,1})=\mathcal{Z}_{\mathcal{R}^\prime}(z_{r_l^{(1)},l},\ldots,z_{1,1})z_{r_l^{(1)},l}^{-n_{r_l^{(1)},l}(\alpha_l)_{(0)}/k}\cdots z_{1,1}^{-n_{1,1}(\alpha_1)_{(0)}/k}\ep_{\alpha_l}^{-n_{r_l^{(1)},l}/k}\cdots\ep_{\alpha_1}^{-n_{1,1}/k}.\]
Note that the parafermionic current of charge-type $\mathcal{R}^\prime$ also commute with the diagonal action $\rho(k\alpha)$ for $\alpha\in Q$.
As in the $\mathcal{Z}$-operator, we set
\[\Psi_{\mathcal{R}^\prime}^{\hat{\nu}}(z_{r_l^{(1)},l},\ldots,z_{1,1})=\sum_{m_{r_l^{(1)},l},\ldots,m_{1,1}}\psi_{\mathcal{R}^\prime}^{\hat{\nu}}(m_{r_l^{(1)},l},\ldots,m_{1,1})z_{r_l^{(1)},l}^{-m_{r_l^{(1)},l}-n_{r_l^{(1)},l}}\cdots z_{1,1}^{-m_{1,1}-n_{1,1}}\]
where the summation is over all sequences $(m_{r_l^{(1)},l},\ldots,m_{1,1})$ such that $m_{p,i}\in\frac{1}{4}\Z+\frac{n_{p,i}\pair{(\alpha_i)_{(0)},\mu}}{k}$ on the $\mu$-weight space $L(k\La_0)^{\hat{\heh}[\nu]^+}$.

Then we introduce several lemmas which give the parafermionic bases of the space $L(k\La_0)_{kQ}^{\hat{\heh}[\nu]^+}$.
First, next lemma associates the coefficients of $\mathcal{Z}$-operators with those of parafermionic currents (cf. \cite[Lemma 10]{OT}).
\begin{lemma}\label{Zpara}
For a simple root $\beta$, $m\in\frac{1}{4}\Z$ and weight $\mu$ we have
\[\left.\mathcal{Z}_\beta(m)\mathrel{}\middle|\mathrel{}_{L(k\La_0)_\mu^{\hat{\heh}[\nu]^+}}=C(\beta,\mu)\psi_\beta^{\hat{\nu}}(m+\pair{\beta_{(0)},\mu}/k)\mathrel{}\middle|\mathrel{}_{L(k\La_0)_\mu^{\hat{\heh}[\nu]^+}}\right..\]
\end{lemma}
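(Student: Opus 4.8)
The plan is to establish this by unwinding the definitions (\ref{Zop}) and (\ref{para c}) and matching the two formal Laurent series coefficient by coefficient, following \cite[Lemma 10]{OT}. The key observation is that the three factors $E^{\pm}(\beta,z)^{1/k}$, $z^{-\beta_{(0)}/k}$ and $\ep_\beta^{-1/k}$ which relate $x_\beta^{\hat{\nu}}(z)$ to $\Psi_\beta^{\hat{\nu}}(z)$ are all diagonal with respect to the $\heh_{(0)}$-weight grading, and the first commutes with the Heisenberg action (so that $\mathcal{Z}_\beta$ indeed preserves $L(k\La_0)^{\hat{\heh}[\nu]^+}$). Hence, once we restrict to a single weight space $L(k\La_0)_\mu^{\hat{\heh}[\nu]^+}$, the operators $z^{\beta_{(0)}}$ and $\ep_\beta$ reduce to the scalars $z^{\pair{\beta_{(0)},\mu}}$ and $C(\beta,\mu)$ by (\ref{actut3}) and the definition of $\ep_\beta$, so that passing between the modes of $\mathcal{Z}_\beta(z)$ and those of $\Psi_\beta^{\hat{\nu}}(z)$ amounts only to a shift of the grading by $\pair{\beta_{(0)},\mu}/k$ and multiplication by $C(\beta,\mu)$.

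Concretely, I would first invert (\ref{para c}) with $n=1$ to get $\mathcal{Z}_\beta(z)=\Psi_\beta^{\hat{\nu}}(z)\,\ep_\beta^{1/k}\,z^{\beta_{(0)}/k}$; this is legitimate because $\ep_\beta$ and $z^{\beta_{(0)}}$ are simultaneously diagonalised by the weight decomposition, so that for a fixed choice of branches their fractional powers commute with each other and are the two-sided inverses of the corresponding factors in (\ref{para c}). Applying both sides to $v\in L(k\La_0)_\mu^{\hat{\heh}[\nu]^+}$ and using (\ref{actut3}) together with the definition of $\ep_\beta$ gives
\[
\left.\mathcal{Z}_\beta(z)\right|_{L(k\La_0)_\mu^{\hat{\heh}[\nu]^+}}=C(\beta,\mu)\,z^{\pair{\beta_{(0)},\mu}/k}\left.\Psi_\beta^{\hat{\nu}}(z)\right|_{L(k\La_0)_\mu^{\hat{\heh}[\nu]^+}}.
\]
Expanding $\mathcal{Z}_\beta(z)=\sum_{m\in\frac{1}{4}\Z}\mathcal{Z}_\beta(m)z^{-m-1}$ and, on the $\mu$-weight space, $\Psi_\beta^{\hat{\nu}}(z)=\sum_{m'}\psi_\beta^{\hat{\nu}}(m')z^{-m'-1}$ with $m'$ ranging over $\frac{1}{4}\Z+\pair{\beta_{(0)},\mu}/k$, and then comparing the coefficient of $z^{-m-1}$ on both sides forces $m'=m+\pair{\beta_{(0)},\mu}/k$ and yields exactly the stated identity.

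I expect the main (and essentially only) delicate point to be the bookkeeping of the rational powers of $z$: one has to verify that the shifted index $m+\pair{\beta_{(0)},\mu}/k$ actually lies in the index set $\frac{1}{4}\Z+\pair{\beta_{(0)},\mu}/k$ over which the modes $\psi_\beta^{\hat{\nu}}$ of the parafermionic current are indexed on $L(k\La_0)_\mu^{\hat{\heh}[\nu]^+}$ (which is immediate from $m\in\frac{1}{4}\Z$), and that the branches of $z^{\pm\beta_{(0)}/k}$ and of $\ep_\beta^{\pm1/k}$ are chosen consistently so that the inversion of (\ref{para c}) above holds as an identity of operators; the latter is harmless since $C(\beta,\mu)\in\pair{\zeta}$. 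Everything else is a routine manipulation of formal series, entirely parallel to the corresponding statements for the untwisted case and for the remaining twisted types in \cite{BKP} and \cite{OT}.
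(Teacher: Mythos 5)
Your proposal is correct and follows essentially the same route the paper intends: the paper gives no argument beyond citing \cite[Lemma 10]{OT}, and the content of that reference is precisely the definition-unwinding of (\ref{Zop}) and (\ref{para c}) followed by the coefficient comparison that you carry out. One bookkeeping caution: with the definitions as literally stated ($\ep_\beta u=C(\beta,\mu)u$ on $L(k\La_0)_\mu$ and the factor $\ep_\beta^{-1/k}$ in (\ref{para c})), restricting $\ep_\beta^{1/k}$ to the $\mu$-weight space produces the scalar $C(\beta,\mu)^{1/k}$ rather than $C(\beta,\mu)$, so the constant in your intermediate display (and indeed in the lemma as stated) should strictly read $C(\beta,\mu)^{1/k}$ unless the $k$-th root is built into the definition of $\ep_\beta$ --- a discrepancy that originates in the paper's conventions rather than in your argument.
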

Next, we consider the relation between different parafermionic currents.
The following lemmas are obtained by direct computation.
The proof are parallel to that of Lemma 3.2 and 3.3 of \cite{BKP} respectively.
\begin{lemma}\label{para c2}
For a simple root $\beta$ and a positive integer $n$,
\begin{align}\label{para de}
\Psi_{n\beta}^{\hat{\nu}}(z)=\left.\left(\prod_{1\leq p<s\leq n}\prod_{i=0}^{3}(z_s^\frac{1}{4}-\zeta^iz_p^\frac{1}{4})^{\pair{\nu^i\beta,\beta}/k}\right)\Psi_\beta^{\hat{\nu}}(z_n)\cdots\Psi_\beta^{\hat{\nu}}(z_1)\right|_{z_n=\cdots=z_1=z}.
\end{align}
\end{lemma}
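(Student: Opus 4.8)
The plan is to derive the product expression for $\Psi_{n\beta}^{\hat{\nu}}(z)$ from the definition of the $\mathcal{Z}$-operator, the relation (\ref{TVO3}) defining $x_{n\alpha}^{\hat{\nu}}(z)$ as the $n$-th power of the coproduct of the twisted vertex operator, and the commutation relation (\ref{Ecom}) for the operators $E^\pm$. First I would write out $\Psi_{n\beta}^{\hat{\nu}}(z)$ via (\ref{para c}) as $E^-(\beta,z)^{n/k}x_{n\beta}^{\hat{\nu}}(z)E^+(\beta,z)^{n/k}z^{-n\beta_{(0)}/k}\ep_\beta^{-n/k}$, and expand $x_{n\beta}^{\hat{\nu}}(z) = [\Delta^{(k-1)}(Y^{\hat{\nu}}(\iota(e_\beta),z))]^n$. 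Writing $z_n,\ldots,z_1$ for $n$ copies of the variable and using the fact that $x_{n\beta}^{\hat{\nu}}(z)$ arises by specializing the product $x_\beta^{\hat{\nu}}(z_n)\cdots x_\beta^{\hat{\nu}}(z_1)$ at $z_n=\cdots=z_1=z$ (together with the exchange relations among the single twisted vertex operators), one identifies the normal-ordering discrepancy: moving the factors $E^\pm$ across the product produces exactly the factor $\prod_{1\le p<s\le n}\prod_{j=0}^3(z_s^{1/4}-\zeta^j z_p^{1/4})^{\pair{\nu^j\beta,\beta}/k}$ from (\ref{Ecom}), with the exponent $n/k$ coming from the $n/k$-th powers of $E^\pm$ in the $\mathcal{Z}$-operator.

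Concretely, the key steps in order are: (i) insert one factor $E^-(\beta,z)^{1/k}$ and one factor $E^+(\beta,z)^{1/k}$ around each of the $n$ single twisted vertex operators $x_\beta^{\hat{\nu}}(z_s)$ to form $\mathcal{Z}_\beta(z_s)$, and similarly distribute the monomial/$\ep_\beta$ corrections to form $\Psi_\beta^{\hat{\nu}}(z_s)$; (ii) when the $2n$ inserted $E^\pm$ factors are commuted back into a single $E^-(\beta,z)^{n/k}$ on the left and $E^+(\beta,z)^{n/k}$ on the right, collect the cross-terms $E^+(\beta,z_s)^{1/k}E^-(\beta,z_p)^{1/k}$ for $p<s$, each of which contributes $\prod_{j=0}^3(1-\zeta^j z_p^{1/4}/z_s^{1/4})^{\pair{\nu^j\beta,\beta}/k}$ by (\ref{Ecom}); (iii) convert $(1-\zeta^j z_p^{1/4}/z_s^{1/4})$ into $(z_s^{1/4}-\zeta^j z_p^{1/4})$ up to a power of $z_s^{1/4}$, and check that the leftover powers of $z_s^{1/4}$ cancel against the $z^{-n\beta_{(0)}/k}$ corrections and the weight shifts built into the $z_s^{-n_s\beta_{(0)}/k}$ factors in the definition of $\Psi_\beta^{\hat\nu}$; (iv) finally specialize $z_n=\cdots=z_1=z$. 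One also checks that the $\ep_\beta^{-1/k}$ factors multiply to $\ep_\beta^{-n/k}$ since $\ep_\beta$ acts by a scalar on each weight space and the intermediate vectors stay in controlled weight spaces.

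The main obstacle I expect is bookkeeping the fractional powers of the formal variables and making sure the specialization $z_n=\cdots=z_1=z$ is legitimate: the cross-term factors $(z_s^{1/4}-\zeta^j z_p^{1/4})^{\pair{\nu^j\beta,\beta}/k}$ have fractional exponents, so one must argue (as in \cite[Lemma 3.2]{BKP}) that after multiplying everything out the total expression has a well-defined limit as the variables coalesce — equivalently, that the product of $E^-$'s and $E^+$'s, which individually involve $z^{\pm 1/4}$ series, reorganizes into something whose coefficients are finite sums. This is exactly the point where the analogue in \cite{BKP} is delicate, and the twisted setting only changes the shape of the Euler-type factor (four roots of unity $\zeta^j$ instead of two) without changing the structure of the argument. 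Since the paper explicitly states that the proof is parallel to that of \cite[Lemma 3.2]{BKP}, the remaining verification is the routine algebraic manipulation of (\ref{Ecom}) sketched above, so I would present the proof as a short reduction to that computation rather than reproducing all of it.
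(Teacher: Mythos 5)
Your overall strategy is exactly the paper's: the paper gives no written proof beyond ``direct computation, parallel to \cite[Lemma 3.2]{BKP}'', and the computation it has in mind is the one you describe --- expand $x_{n\beta}^{\hat{\nu}}(z)$ as the specialization of $x_{\beta}^{\hat{\nu}}(z_n)\cdots x_{\beta}^{\hat{\nu}}(z_1)$, distribute the $E^\pm(\beta,\cdot)^{1/k}$, $z^{-\beta_{(0)}/k}$ and $\ep_\beta^{-1/k}$ factors to form the individual currents $\Psi_\beta^{\hat{\nu}}(z_s)$, and read off the discrepancy from (\ref{Ecom}). Your remarks in step (iii) about the powers $z_s^{-\pair{\beta_{(0)},\beta_{(0)}}/k}$ arising from $z^{h}e_\beta=e_\beta z^{h+\pair{h,\beta}}$ (which convert $(1-\zeta^jz_p^{1/4}/z_s^{1/4})$ into $(z_s^{1/4}-\zeta^jz_p^{1/4})$) and about $\ep_\beta$ (where $C(\beta,\beta)=1$ is what makes the $\ep_\beta^{-1/k}$ factors combine trivially, as the paper notes after the lemma) are the right supporting observations.

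The one place your accounting goes wrong is step (ii). The cross-terms $E^+(\beta,z_s)^{1/k}E^-(\beta,z_p)^{1/k}$ between the \emph{inserted} factors contribute, on the level-$k$ module where $c$ acts by $k$, the Euler factor to the power $+\pair{\nu^j\beta,\beta}/k$ per pair $(p,s)$; if these were the only contributions, you would end up proving the identity with the reciprocal of the stated prefactor. You must also commute $E^+(\beta,z_s)^{1/k}$ past the factor $E^-(-\beta,z_p)$ sitting inside $Y^{\hat{\nu}}(\iota(e_\beta),z_p)$, and $E^+(-\beta,z_s)$ (inside $Y^{\hat{\nu}}(\iota(e_\beta),z_s)$) past the inserted $E^-(\beta,z_p)^{1/k}$; each of these contributes the Euler factor to the power $-\pair{\nu^j\beta,\beta}/k$, so the net exponent per pair is $1/k-1/k-1/k=-1/k$. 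It is this net $-1/k$ that makes $\Psi_\beta^{\hat{\nu}}(z_n)\cdots\Psi_\beta^{\hat{\nu}}(z_1)$ equal to $\prod_{p<s}\prod_j(z_s^{1/4}-\zeta^jz_p^{1/4})^{-\pair{\nu^j\beta,\beta}/k}$ times a regular expression specializing to $\Psi_{n\beta}^{\hat{\nu}}(z)$, which is the statement of the lemma. With that correction the rest of your outline, including the justification of the specialization $z_n=\cdots=z_1=z$, goes through as in \cite[Lemma 3.2]{BKP}.
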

\noindent Note that we use the fact that $C(\beta,\beta)=1$ for a simple root in the proof of the lemma \ref{para c2}.
For simplicity, we set
\[\Psi_{n_t\beta_t,\ldots,n_1\beta_1}(z_t,\ldots,z_1)=\mathcal{Z}_{(n_t,\ldots,n_1)}(z_t,\ldots,z_1)\prod_{i=1}^tz_i^{-n_i(\beta_i)_{(0)}/k}\ep_{\beta_i}^{-n_i/k}\]
for a given simple roots $\beta_t,\ldots,\beta_1$ and charges $n_t,\ldots,n_1$.
\begin{lemma}\label{para c3}
\begin{align}\label{para de2}
\nonumber\Psi_{n_t\beta_t,\ldots,n_1\beta_1}^{\hat{\nu}}&(z_t,\ldots,z_1)\\
&=\left(\prod_{1\leq p<s\leq t}C(\beta_s,\beta_p)^{n_sn_p/k}\prod_{i=0}^3(z_s^\frac{1}{4}-\zeta^iz_p^\frac{1}{4})^{\pair{n_s\nu^i\beta_s,n_p\beta_p}/k}\right)\Psi_{n_t\beta_t}^{\hat{\nu}}(z_t)\cdots\Psi_{n_1\beta_1}^{\hat{\nu}}(z_1).
\end{align}
\end{lemma}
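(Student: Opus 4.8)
The plan is to unwind the definitions and reduce the statement to a rearrangement of one operator string. By (\ref{Zop}) and the definitions of the parafermionic currents, both sides of (\ref{para de2}) are built from the same pieces: $E^{\pm}(\beta_s,z_s)^{n_s/k}=E^{\pm}(\tfrac{n_s}{k}\beta_s,z_s)$, the twisted vertex operators $x^{\hat\nu}_{n_s\beta_s}(z_s)$, and the grading operators $z_s^{-n_s(\beta_s)_{(0)}/k}$ and $\ep_{\beta_s}^{-n_s/k}$. Inside $\mathcal{Z}_{(n_t,\ldots,n_1)}$ all the $E^{-}$'s stand to the left, all the $x^{\hat\nu}$'s in the middle, all the $E^{+}$'s to the right, and the grading operators at the far right; on the right-hand side of (\ref{para de2}) these are grouped block by block into the $\Psi^{\hat\nu}_{n_s\beta_s}(z_s)$'s. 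So (\ref{para de2}) is exactly the assertion that the two orderings differ by the displayed scalar.

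First I would handle the $E^{\pm}$-and-$x^{\hat\nu}$ part. Recall from (\ref{TVO2}) that $x^{\hat\nu}_{n_p\beta_p}(z_p)$ carries an $E^{-}(-\beta_p,z_p)$-factor and an $E^{+}(-\beta_p,z_p)$-factor on each of the $n_p$ tensor slots it occupies, the lattice and $z^h$ parts acting only on $U_T$; and since $x^{\hat\nu}_{2\alpha}(z)=0$ on a single $V_L^T$, no two copies meet on one slot. Thus the only nontrivial crossings are controlled by (\ref{Ecom}): for $p<q$, passing $E^{-}(\tfrac{n_p}{k}\beta_p,z_p)$ to the right of $x^{\hat\nu}_{n_q\beta_q}(z_q)$, passing $E^{-}(\tfrac{n_p}{k}\beta_p,z_p)$ to the right of $E^{+}(\tfrac{n_q}{k}\beta_q,z_q)$, and passing $x^{\hat\nu}_{n_p\beta_p}(z_p)$ to the right of $E^{+}(\tfrac{n_q}{k}\beta_q,z_q)$ each yield a factor $\prod_{i=0}^{3}\bigl(1-\zeta^{i}z_p^{1/4}/z_q^{1/4}\bigr)^{c_i/k}$, with $c_i$ equal, respectively, to $+n_pn_q\pair{\nu^i\beta_q,\beta_p}$, then $-n_pn_q\pair{\nu^i\beta_q,\beta_p}$ (after summing over all $k$ slots, on each of which both operators act), then $+n_pn_q\pair{\nu^i\beta_q,\beta_p}$; the three add up to $n_pn_q\pair{\nu^i\beta_q,\beta_p}/k=\pair{n_q\nu^i\beta_q,n_p\beta_p}/k$. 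Using $1-\zeta^{i}z_p^{1/4}/z_q^{1/4}=z_q^{-1/4}(z_q^{1/4}-\zeta^{i}z_p^{1/4})$ and $\tfrac14\sum_i\pair{\nu^i\beta_q,\beta_p}=\pair{(\beta_q)_{(0)},\beta_p}$, this step produces $\mathcal{Z}_{(n_t,\ldots,n_1)}=\bigl(\prod_{p<q}\prod_i(z_q^{1/4}-\zeta^{i}z_p^{1/4})^{\pair{n_q\nu^i\beta_q,n_p\beta_p}/k}\,z_q^{-n_pn_q\pair{(\beta_q)_{(0)},\beta_p}/k}\bigr)\mathcal{Z}_{n_t\beta_t}(z_t)\cdots\mathcal{Z}_{n_1\beta_1}(z_1)$. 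Then I would move each $z_q^{-n_q(\beta_q)_{(0)}/k}\ep_{\beta_q}^{-n_q/k}$ leftward into its block; since $\mathcal{Z}_{n_p\beta_p}(z_p)$ shifts the $\heh_{(0)}$-weight by $n_p(\beta_p)_{(0)}$, passing it across $\mathcal{Z}_{n_p\beta_p}(z_p)$ produces, via (\ref{exe2}) and the action of the $z^h$, $\ep_\beta$ operators on weight spaces, the constant $C(\beta_q,\beta_p)^{n_pn_q/k}$ together with $z_q^{+n_pn_q\pair{(\beta_q)_{(0)},\beta_p}/k}$, the latter cancelling the surplus $z_q$-power from the first step. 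Assembling the two steps gives (\ref{para de2}). It is cleanest to first establish the two-factor case $t=2$ in this way — the exact analogue of Lemma \ref{para c2}, now with the extra $C(\beta_2,\beta_1)$ that was absent there only because $C(\beta,\beta)=1$ — and then obtain general $t$ by induction, commuting $\Psi^{\hat\nu}_{n_t\beta_t}(z_t)$ past $\Psi^{\hat\nu}_{n_{t-1}\beta_{t-1},\ldots,n_1\beta_1}(z_{t-1},\ldots,z_1)$ and invoking the inductive hypothesis.

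Throughout, the identity is to be understood for formal series in $z_1^{\pm1/4},\ldots,z_t^{\pm1/4}$ acting on the vacuum space $L(k\La_0)^{\hat\heh[\nu]^+}$, hence on the parafermionic space, with each $(z_q^{1/4}-\zeta^{i}z_p^{1/4})^{c/k}$ expanded in nonnegative powers of $z_p^{1/4}$; as in \cite{BKP} one checks that all reorderings take place inside a ring in which the relevant infinite products are meaningful. The main obstacle is precisely this bookkeeping: tracking the signs of the pairings $\pair{\nu^i\beta_q,\beta_p}$ and the fractional exponents through the $E^{\pm}$-exchanges so that the three crossing contributions collapse to the single power $\pair{n_q\nu^i\beta_q,n_p\beta_p}/k$, and confirming that the scalars produced by the $z^h$, $\ep_\beta$ and $e_\alpha$ operators assemble precisely into $C(\beta_q,\beta_p)^{n_qn_p/k}$ while the $z_q$-monomial corrections cancel. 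Once the $t=2$ case is pinned down, the induction is purely formal.
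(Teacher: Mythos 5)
Your proposal is correct and is exactly the ``direct computation'' the paper has in mind: the paper gives no details, simply asserting that the proof is parallel to Lemma 3.3 of \cite{BKP}, and your unwinding --- reordering the $E^{\pm}$-factors against the $x^{\hat\nu}$'s via (\ref{Ecom}) with the three crossing exponents summing to $\pair{n_s\nu^i\beta_s,n_p\beta_p}/k$, then commuting $z^{-n(\beta)_{(0)}/k}\ep_\beta^{-n/k}$ across the weight-shifting $\mathcal{Z}$-operators to produce $C(\beta_s,\beta_p)^{n_sn_p/k}$ and cancel the surplus $z_s$-power --- is that computation carried out in full. The sign bookkeeping and the slot-counting (the $1/k$ from the $k$ tensor factors versus the $n_q$ occupied slots) both check out, so the argument is sound.
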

From Theorem \ref{basis3}, we have
\begin{theorem}\label{basis4}
For the highest weight $k\La_0$, the set of vectors
\[\pi_{kQ}^{\hat{\heh}[\nu]^+}\mathcal{Z}_{\mathcal{R}^\prime}(m_{r_l^{(1)},l},\ldots,m_{1,1})v_0=\psi_{\mathcal{R}^\prime}(m_{r_l^{(1)},l},\ldots,m_{1,1})v_0\]
is a basis of the paragermionic space $L(k\La_0)_{kQ}^{\hat{\heh}[\nu]^+}$,  where $\mathcal{Z}_{\mathcal{R}^\prime}(m_{r_l^{(1)},l},\ldots,m_{1,1})v_0$ is a vector that appears in the basis of the vacuum space $L(k\La_0)^{\hat{\heh}[\nu]^+}$.
\end{theorem}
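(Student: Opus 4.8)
The plan is to derive Theorem~\ref{basis4} as a corollary of Theorem~\ref{basis3}, in the spirit of \cite{BKP,OT}. The structural observation that drives everything is that, as an operator on $L(k\La_0)$, $e_\mu$ for $\mu\in Q$ coincides up to a nonzero scalar (coming from $\ep_C$) with the diagonal operator $\rho(k\mu)=e_\mu\ot\cdots\ot e_\mu$ defining the $kQ$-action; for $\mu$ a simple root this is the very definition. Since $e_\mu$ commutes with $\hat{\heh}[\nu]^\pm$ by (\ref{ehme2}) it preserves the decomposition (\ref{decomposition}), hence commutes with $\pi^{\hat{\heh}[\nu]^+}$, while $(\rho(k\mu)-1)v$ lies in the subspace quotiented out in (\ref{para sp}). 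Writing $\pi=\pi_{kQ}^{\hat{\heh}[\nu]^+}\circ\pi^{\hat{\heh}[\nu]^+}$, we get $\pi(e_\mu\mathcal{Z}_{\mathcal{R}^\prime}(m)v_0)\in\C^\times\cdot\pi(\mathcal{Z}_{\mathcal{R}^\prime}(m)v_0)=\C^\times\cdot\psi_{\mathcal{R}^\prime}(m)v_0$, the last equality being Lemma~\ref{Zpara} together with the fact that on $v_0$ (of $\heh_{(0)}$-weight $0$) the twisting factors $z^{-n\beta_{(0)}/k}$ and $\ep_\beta^{-n/k}$ act trivially. Spanning is then immediate: applying $\pi$ to the basis of $L(k\La_0)^{\hat{\heh}[\nu]^+}$ from Theorem~\ref{basis3}, the image spans $L(k\La_0)_{kQ}^{\hat{\heh}[\nu]^+}$ and, by the above, equals up to nonzero scalars exactly $\{\psi_{\mathcal{R}^\prime}(m)v_0\}$, every $e_\mu$-prefixed vector collapsing onto the corresponding $\mu=0$ vector.

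For linear independence I would first establish that $\pi_{kQ}^{\hat{\heh}[\nu]^+}$ is injective on each weight space $L(k\La_0)_\nu^{\hat{\heh}[\nu]^+}$: the operator $\rho(k\alpha)$ restricts to a bijection $L(k\La_0)_\nu^{\hat{\heh}[\nu]^+}\to L(k\La_0)_{\nu+k\alpha}^{\hat{\heh}[\nu]^+}$, and $\nu+k\alpha=\nu$ forces $\alpha=0$, so every $kQ$-orbit of $\heh_{(0)}$-weights is free; translating a homogeneous basis of $L(k\La_0)_\nu^{\hat{\heh}[\nu]^+}$ by all $\rho(k\alpha)$ yields a basis of $\bigoplus_{\alpha\in Q}L(k\La_0)_{\nu+k\alpha}^{\hat{\heh}[\nu]^+}$, in which the coinvariant relations identify every basis vector with its $\alpha=0$ representative, so the composite $L(k\La_0)_\nu^{\hat{\heh}[\nu]^+}\hookrightarrow\bigoplus_{\alpha}L(k\La_0)_{\nu+k\alpha}^{\hat{\heh}[\nu]^+}\twoheadrightarrow L(k\La_0)_{kQ}^{\hat{\heh}[\nu]^+}$ is injective. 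Now given $\sum c_{\mathcal{R}^\prime,m}\psi_{\mathcal{R}^\prime}(m)v_0=0$, each summand is homogeneous for the $k\La_0+Q/kQ$-grading with class determined by the color-type of $\mathcal{R}^\prime$ (equivalently by the residues of ${\rm chg}_i(\mathcal{R}^\prime)$ modulo $k$), so I may assume all $\mathcal{R}^\prime$ occurring share one class; then there is a unique $\mu(\mathcal{R}^\prime)\in Q$, depending only on that class, so that all the vectors $e_{\mu(\mathcal{R}^\prime)}\mathcal{Z}_{\mathcal{R}^\prime}(m)v_0$ lie in one fixed weight space $L(k\La_0)_{\nu_c}^{\hat{\heh}[\nu]^+}$. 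Applying $\pi_{kQ}^{\hat{\heh}[\nu]^+}$ to $\sum c_{\mathcal{R}^\prime,m}e_{\mu(\mathcal{R}^\prime)}\mathcal{Z}_{\mathcal{R}^\prime}(m)v_0$ recovers the original relation up to rescaled coefficients, so by the injectivity just proved this combination already vanishes in $L(k\La_0)^{\hat{\heh}[\nu]^+}$; since each $e_{\mu(\mathcal{R}^\prime)}\mathcal{Z}_{\mathcal{R}^\prime}(m)v_0$ is one of the basis vectors of Theorem~\ref{basis3} and $(\mathcal{R}^\prime,m)\mapsto(\mu(\mathcal{R}^\prime),\mathcal{R}^\prime,m)$ is injective, all $c_{\mathcal{R}^\prime,m}$ vanish.

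I expect the main obstacle to be the independence step, specifically pinning down the $kQ$-module structure of the vacuum space precisely enough to obtain the weight-space injectivity of $\pi_{kQ}^{\hat{\heh}[\nu]^+}$, and checking that after the $e_{\mu(\mathcal{R}^\prime)}$-shift one genuinely lands among the pairwise distinct basis vectors of Theorem~\ref{basis3}; the projective nature of $\rho$ and the cocycle $\ep_C$ enter only through harmless nonzero scalars and should not cause trouble.
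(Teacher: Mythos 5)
Your route is the one the paper itself intends: Theorem~\ref{basis4} is presented as an immediate consequence of Theorem~\ref{basis3}, with spanning coming from the collapse of every $e_\mu$--prefixed basis vector onto its $\mu=0$ representative under $\pi_{kQ}^{\hat{\heh}[\nu]^+}$, and independence coming from identifying the coinvariant space with a direct sum of single weight spaces, one per coset in $(k\La_0+Q)/kQ$, exactly as in \cite[Thm.~3.2]{BKP} and \cite{OT}. Your reduction of independence to injectivity of $\pi_{kQ}^{\hat{\heh}[\nu]^+}$ on a fixed weight space, followed by pulling the relation back to pairwise distinct basis vectors $e_{\mu(\mathcal{R}^\prime)}\mathcal{Z}_{\mathcal{R}^\prime}(m)v_0$ of Theorem~\ref{basis3}, is the right skeleton. (One wording slip: $\mu(\mathcal{R}^\prime)$ must depend on the color-type of $\mathcal{R}^\prime$, not only on its class modulo $kQ$, since different color-types within one class sit in different weight spaces of the vacuum space; your notation already allows this.)

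The step I would not let you wave away is the closing claim that the cocycle ``enters only through harmless nonzero scalars.'' The subspace quotiented out in (\ref{para sp}) is spanned by $(\rho(k\alpha)-1)v$ for \emph{all} $v$ in the vacuum space, in particular for $v=\rho(k\beta)u$. Since $\rho(k\alpha)\rho(k\beta)=\ep_C(\alpha,\beta)^k\rho(k(\alpha+\beta))$, the combination
\[
(\rho(k\alpha)-1)\rho(k\beta)u+(\rho(k\beta)-1)u-\ep_C(\alpha,\beta)^k(\rho(k(\alpha+\beta))-1)u=\bigl(\ep_C(\alpha,\beta)^k-1\bigr)u
\]
already lies in that subspace. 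Hence your assertion that ``the coinvariant relations identify every basis vector with its $\alpha=0$ representative'' --- and with it the injectivity of $\pi_{kQ}^{\hat{\heh}[\nu]^+}$ on weight spaces --- requires $\ep_C(\alpha,\beta)^k=1$ for all $\alpha,\beta\in Q$, i.e.\ that $\alpha\mapsto\rho(k\alpha)$ is an honest rather than merely projective representation of $Q$. This is not automatic in the present twisted setting: by (\ref{cocycle}) one has $\ep_C(\alpha_l,\alpha_l)=(-\zeta)^{-\pair{\nu^{-1}\alpha_l,\alpha_l}}=-\zeta$, so $\ep_C(\alpha_l,\alpha_l)^k=1$ only when $4\mid k$. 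The paper shares this soft spot (the direct-sum decomposition displayed after (\ref{para sp}) is asserted without comment), and the repair is presumably a renormalization of the operators $\rho(k\alpha)$, or of the section $\alpha\mapsto e_\alpha$ over $Q$, trivializing the cocycle; but as written this is precisely the point of your proof that needs an explicit check rather than a dismissal.
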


\section{The fermionic character formula}

By using the quasi-particle bases and the parafermionic bases, we can calculate the character of the principal subspace and the parafermionic space.
Furthermore, combining the main theorem and Lepowsky-Wilson theorem (\ref{LWthm}), we will obtain the fermionic character formula of the standard module.

\subsection{Principal subspace}

On the principal subspace $W(k\La_0)$, we use the weight gradation by $L^{\hat{\nu}}(0)$ or equivalent to $d$.
That is, we have
\[[L^{\hat{\nu}}(0),x_{\alpha}^{\hat{\nu}}(m)]=\left(-m-1+\frac{\pair{\alpha,\alpha}}{2}\right)x_\alpha^{\hat{\nu}}(m)=-mx_\alpha^{\hat{\nu}}(m).\]

We define the character of the principal subspace $W(k\La_0)$ by
\[{\rm ch}\ W(k\La_0)=\sum_{m,r_1,\ldots,r_l\geq0}{\rm dim}\ W(k\La_0)_{(m,r_1,\ldots,r_l)}q^my_1^{r_1}\cdots y_l^{r_l},\]
where $W(k\La_0)_{(m,r_1,\ldots,r_l)}$ is the weight subspace spanned by monomial vectors of the weight $k\La_0-m\delta+r_1\alpha_1+\cdots+r_l\alpha_l$ with respect to $\heh_{(0)}\op\C d$.

For an arbitrary quasi-particle monomial of the form (\ref{QP}), we define the sequence  $\mathcal{P}_i=(p_i^{(1)},\ldots,p_i^{(k)})$ by $p_i^{(s)}=r_i^{(s)}-r_i^{(s+1)}$ for $i=1,\ldots,l$, $s=1,\ldots,k$ so that $p_i^{(s)}$ stand for the number of quasi-particles of color $i$ and charge $s$ in the monomial (\ref{QP}).
Set $\mathcal{P}=(\mathcal{P}_l,\ldots,\mathcal{P}_1)$.
We then rewrite the condition (C2) on the energies in terms of $\mathcal{P}$.
For a fixed charge-type $\mathcal{R}^\prime=(n_{r_l^{(1)},l},\ldots,n_{1,1})$, we have
\[\sum_{p=1}^{r_i^{(1)}}\rho_i(2p-1)n_{p,i}=\rho_i\sum_{s,t=1}^k{\rm min}\{s,t\}p_i^{(s)}p_i^{(t)},\]
\[\frac{1}{2}\sum_{p=1}^{r_i^{(1)}}\sum_{q=1}^{r_{i-1}^{(1)}}{\rm min}\{n_{p,i},n_{q,i-1}\}=\frac{1}{2}\sum_{s,t=1}^k{\rm min}\{s,t\}p_i^{(s)}p_{i-1}^{(t)}.\]
These expression are proved by induction on the level of the standard module (cf. \cite{G}).
We write
\begin{align*}
(q^\frac{1}{2})_n=\begin{cases}
\prod_{i=1}^n(1-q^\frac{1}{2})&(n>0)\\
1&(n=0)
\end{cases}\quad(q^\frac{1}{2})_\infty=\prod_{i\geq1}(1-q^\frac{1}{2}).
\end{align*}
From \cite{A}, we have
\[\frac{1}{(q^\frac{1}{2})_n}=\sum_{j\geq0}p_n(j)q^\frac{j}{2},\]
where $p_n(j)$ is the number of partitions of $j$ with most $n$ parts.
Now, we are able to calculate the character of the principal subspace $W(k\La_0)$ as follows.
\begin{theorem}\label{character}
For affine Lie algebras $A_{2l}^{(2)}$, we have
\[{\rm ch}\ W(k\La_0)=\sum_{\mathcal{P}}\frac{q^{\frac{1}{2}\sum_{i,j=1}^l\sum_{s,t=1}^k\pair{(\alpha_i)_{(0)},(\alpha_j)_{(0)}}{\rm min}\{s,t\}p_i^{(s)}p_j^{(t)}}}{\prod_{i=1}^l\prod_{s=1}^k(q^\frac{1}{2})_{p_i^{(s)}}}\prod_{i=1}^ly_i^{\sum_{s=1}^ksp_i^{(s)}}\]
where the sum goes over all finite sequences $\mathcal{P}$ of $lk$ nonnegative integers.
\end{theorem}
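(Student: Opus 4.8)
The plan is to read the character directly off the basis $\mathcal{B}_W$ of $W(k\La_0)$ established in Theorem~\ref{basis}. A monomial vector $bv_0\in\mathcal{B}_W$ with $b$ as in (\ref{QP}) lies in the weight space $W(k\La_0)_{(-{\rm en}\,b,\,r_1,\ldots,r_l)}$: its $d$- (equivalently $L^{\hat\nu}(0)$-) degree equals $-{\rm en}\,b=-\sum_{i,p}m_{p,i}$ by the relation $[L^{\hat\nu}(0),x_\alpha^{\hat\nu}(m)]=-m\,x_\alpha^{\hat\nu}(m)$, and its $\alpha_i$-charge is $r_i=\sum_p n_{p,i}=\sum_{s=1}^k s\,p_i^{(s)}$, where $p_i^{(s)}=r_i^{(s)}-r_i^{(s+1)}$. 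So I would first fix the datum $\mathcal{P}=(\mathcal{P}_l,\ldots,\mathcal{P}_1)$ of $lk$ nonnegative integers $p_i^{(s)}$; this fixes every dual-charge-type, hence the whole charge-type (the column lengths $n_{p,i}$ are the conjugate partitions), hence the monomial $\prod_i y_i^{\sum_s s\,p_i^{(s)}}$. It then remains to sum $q^{-{\rm en}\,b}$ over all $b\in B_W$ with the prescribed $\mathcal{P}$ and show the result is
\[
\frac{q^{a(\mathcal{P})}}{\prod_{i=1}^l\prod_{s=1}^k(q^{1/2})_{p_i^{(s)}}},\qquad a(\mathcal{P})=\tfrac12\sum_{i,j=1}^l\sum_{s,t=1}^k\pair{(\alpha_i)_{(0)},(\alpha_j)_{(0)}}\min\{s,t\}\,p_i^{(s)}p_j^{(t)}.
\]

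To compute that sum I would work directly with the energy conditions $(C1)$--$(C3)$. For fixed $\mathcal{P}$ write $\delta_{p,i}\ge 0$ for the difference between the right-hand side of $(C2)$ and $m_{p,i}$; by $(C1)$ each $\delta_{p,i}\in\tfrac12\mathbb{Z}_{\ge0}$. The key observation is that within a block of consecutive columns of color $i$ all having the same charge $n$, the $(C2)$ bound decreases by exactly $2\rho_i n$ from one column to the next while the cross-color term $\tfrac12\sum_q\min\{n,n_{q,i-1}\}$ is constant on the block; hence $(C3)$ is equivalent, on each such block, to $\delta_{p,i}\le\delta_{p+1,i}$, and imposes no relation between blocks of different charge or between different colors. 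Consequently, for fixed $\mathcal{P}$, the admissible tuples $(\delta_{p,i})$ are in bijection with collections of partitions, one partition with at most $p_i^{(s)}$ parts for each pair $(i,s)$, and $-{\rm en}\,b=a(\mathcal{P})+\sum_{p,i}\delta_{p,i}$; the value $a(\mathcal{P})=-\sum_{i,p}\bigl(-(2p-1)\rho_i n_{p,i}+\tfrac12\sum_q\min\{n_{p,i},n_{q,i-1}\}\bigr)$ is attained at $\delta\equiv0$, which is admissible because $(C3)$ then holds with equality. Summing $q^{\sum_{p,i}\delta_{p,i}}$ over all admissible tuples and invoking $\frac{1}{(q^{1/2})_n}=\sum_{j\ge0}p_n(j)q^{j/2}$ from \cite{A} for each $(i,s)$ factor gives the displayed expression.

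Finally I would put $a(\mathcal{P})$ into closed form using the two summation identities recorded just before the statement of the theorem, namely $\sum_p(2p-1)\rho_i n_{p,i}=\rho_i\sum_{s,t}\min\{s,t\}p_i^{(s)}p_i^{(t)}$ and $\tfrac12\sum_{p,q}\min\{n_{p,i},n_{q,i-1}\}=\tfrac12\sum_{s,t}\min\{s,t\}p_i^{(s)}p_{i-1}^{(t)}$; the diagonal terms match because $\pair{(\alpha_i)_{(0)},(\alpha_i)_{(0)}}=2\rho_i$, the off-diagonal ones because $\pair{(\alpha_i)_{(0)},(\alpha_{i-1})_{(0)}}=-\tfrac12$ for $2\le i\le l$ (a short computation from $(\alpha_i)_{(0)}=\tfrac12(\alpha_i+\alpha_{2l-i+1})$), and $\pair{(\alpha_i)_{(0)},(\alpha_j)_{(0)}}=0$ for $|i-j|\ge 2$. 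Summing over all $\mathcal{P}$ (a well-defined power series since the quadratic form $\mathcal{P}\mapsto a(\mathcal{P})$ is positive definite) yields the claimed identity. I expect the main obstacle to be the middle step: verifying cleanly that, once $\mathcal{P}$ is fixed, $(C1)$--$(C3)$ decouple into independent ``partition with at most $p_i^{(s)}$ parts'' problems — in particular that the half-integer lattice from $(C1)$ is compatible with the gap $2\rho_i n_{p,i}$ in $(C3)$ so that the variables $2\delta_{p,i}$ are genuinely unconstrained nonnegative integers, and that no constraint survives across charge-blocks or across colors.
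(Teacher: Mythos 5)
Your proposal is correct and is essentially the proof the paper intends: the paper states Theorem~\ref{character} without a written proof, but the two summation identities relating $(2p-1)\rho_i n_{p,i}$ and $\min\{n_{p,i},n_{q,i-1}\}$ to $\min\{s,t\}p_i^{(s)}p_j^{(t)}$, together with the partition generating function $1/(q^{1/2})_n$, are recorded immediately beforehand precisely so that the character is read off from the basis $\mathcal{B}_W$ by the counting argument you describe. The one subtlety you rightly flag --- that for fixed $\mathcal{P}$ the conditions $(C1)$--$(C3)$ decouple into independent partition problems with at most $p_i^{(s)}$ parts, with no residual constraint across charge blocks or colors (in particular, the energy ordering in the definition of $M_{QP}$ is only effective within a block of equal charges, where it is subsumed by $(C3)$) --- is resolved the way you resolve it, and your verification that the $(C2)$ bound and the $(C3)$ gap lie in the coset $\rho_i n_{p,i}+\tfrac12\Z$ so that the $2\delta_{p,i}$ are unconstrained nonnegative integers is exactly the needed check.
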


\subsection{Parafermionic space}
Using the corresponding result of the principal subspace, we calculate the character of the parafermionic space.
For untwisted Lie algebras, we use the parafermionic grading operator defined by (3.35) in \cite{Li2}.
We should modify the grading operator $L^{\hat{\nu}}(0)$ as well.
But we do not find the coset Virasoro algebra construction \cite[\S 3]{Li2} for $\hat{\nu}$-twisted $V_L$-module.
Therefore we replace the grading operator $L^{\hat{\nu}}(0)$ by $D$ defined by
\[D=-d-D^{\hat{\heh}[\nu]^+},\quad \left.D^{\hat{\heh}[\nu]^+}\right|_{L(k\La_0)_\mu^{\hat{\heh}[\nu]^+}}=\frac{\pair{\mu_{(0)},\mu_{(0)}}}{2k}\]
for a weight $\mu$ (cf. \cite{OT}).
For a simple root $\beta\in L$ and $m\in\frac{1}{4}\Z$ , we have
\[[D,x_\beta^{\hat{\nu}}(m)]=\left(-m-\frac{\pair{\beta_{(0)},\beta_{(0)}}}{2k}\right)x_\beta^{\hat{\nu}}(m).\]
Then we also have
\[[D,\psi_\beta^{\hat{\nu}}(m)]=\left(-m-\frac{\pair{\beta_{(0)},\beta_{(0)}}}{2k}\right)\psi_\beta^{\hat{\nu}}(m).\]
The conformal energy of $\psi_\beta^{\hat{\nu}}(m)$ is defined as the coefficient of the right hand side and denoted by
\[{\rm en}\ \psi_\beta^{\hat{\nu}}(m)=-m-\frac{\pair{\beta_{(0)},\beta_{(0)}}}{2k}.\]
Now we can compute the conformal energies of $\psi_{n\beta}^{\hat{\nu}}(m)$ and $\psi_{n_t\beta_t,\ldots,n_1\beta_1}^{\hat{\nu}}(m_t,\ldots,m_1)$ in the same way as \cite[Lemma 14]{OT}.
\begin{lemma}\label{para en}
For a simple root $\beta$ and charge $n$, we have
\[{\rm en}\ \psi_{n\beta}^{\hat{\nu}}(m)=-m-\frac{n^2\pair{\beta_{(0)},\beta_{(0)}}}{2k}.\]
Moreover, for simple roots $\beta_t,\ldots,\beta_1$ and charges $n_t,\ldots,n_1$, we have
\[{\rm en}\ \psi_{n_t\beta_t,\ldots,n_1\beta_1}^{\hat{\nu}}(m_t,\ldots,m_1)=\sum_{i=1}^t\left({\rm en}\ \psi_{n_i\beta_i}^{\hat{\nu}}(m_i)-\sum_{p=1}^{i-1}\frac{\pair{n_i(\beta_i)_{(0)},n_p(\beta_p)_{(0)}}}{k}\right).\]
\end{lemma}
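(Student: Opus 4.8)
The plan is to compute the conformal energy of $\psi_{n\beta}^{\hat{\nu}}(m)$ first, then bootstrap to the multi-colour case using Lemma \ref{para c3}. For the first statement, I would start from Lemma \ref{para c2}, which expresses $\Psi_{n\beta}^{\hat{\nu}}(z)$ as a specialization $z_n=\cdots=z_1=z$ of the product $\left(\prod_{1\le p<s\le n}\prod_{i=0}^3(z_s^{1/4}-\zeta^i z_p^{1/4})^{\pair{\nu^i\beta,\beta}/k}\right)\Psi_\beta^{\hat{\nu}}(z_n)\cdots\Psi_\beta^{\hat{\nu}}(z_1)$. Each factor $\Psi_\beta^{\hat{\nu}}(z_j)$ contributes, via its leading exponent of $z_j$, the conformal energy $-m_j-\pair{\beta_{(0)},\beta_{(0)}}/2k$ for the corresponding mode, and the scalar prefactor $\prod_{p<s}\prod_i(z_s^{1/4}-\zeta^i z_p^{1/4})^{\pair{\nu^i\beta,\beta}/k}$ contributes to the energy shift when the variables are specialized. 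The key numerical input is that $\sum_{i=0}^3\pair{\nu^i\beta,\beta}=\pair{\sum_{i=0}^3\nu^i\beta,\beta}=4\pair{\beta_{(0)},\beta_{(0)}}$ since $\sum_{i=0}^3\nu^i\beta=4\beta_{(0)}$ (the projection onto $\heh_{(0)}$ picked out by averaging over the $\mathbb{Z}/4\mathbb{Z}$-action), so each unordered pair $\{p,s\}$ in the product, after taking the $z$-degree $\tfrac14$ of each of the four factors, contributes $-\pair{\beta_{(0)},\beta_{(0)}}/k$ to the total degree. Counting $\binom{n}{2}$ such pairs plus the $n$ individual summands $-\pair{\beta_{(0)},\beta_{(0)}}/2k$ gives total $-\tfrac{n}{2k}\pair{\beta_{(0)},\beta_{(0)}}-\tfrac{n(n-1)}{2k}\pair{\beta_{(0)},\beta_{(0)}}=-\tfrac{n^2}{2k}\pair{\beta_{(0)},\beta_{(0)}}$, which yields $\mathrm{en}\ \psi_{n\beta}^{\hat{\nu}}(m)=-m-n^2\pair{\beta_{(0)},\beta_{(0)}}/2k$ as claimed.

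For the second statement I would apply Lemma \ref{para c3}: the multi-colour parafermionic current $\Psi_{n_t\beta_t,\ldots,n_1\beta_1}^{\hat{\nu}}(z_t,\ldots,z_1)$ equals $\left(\prod_{1\le p<s\le t}C(\beta_s,\beta_p)^{n_sn_p/k}\prod_{i=0}^3(z_s^{1/4}-\zeta^i z_p^{1/4})^{\pair{n_s\nu^i\beta_s,n_p\beta_p}/k}\right)$ times the product $\Psi_{n_t\beta_t}^{\hat{\nu}}(z_t)\cdots\Psi_{n_1\beta_1}^{\hat{\nu}}(z_1)$. The scalar $C(\beta_s,\beta_p)$ does not affect the $z$-degree. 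Taking the $z_s$-degree (equivalently, extracting the mode $\psi_{n_t\beta_t,\ldots,n_1\beta_1}^{\hat{\nu}}(m_t,\ldots,m_1)$ from the full expansion) and using the commutator $[D,\cdot]$, the contribution of each single-colour factor is $\mathrm{en}\ \psi_{n_i\beta_i}^{\hat{\nu}}(m_i)$ as just computed, and the prefactor $\prod_{p<s}\prod_{i=0}^3(z_s^{1/4}-\zeta^i z_p^{1/4})^{\pair{n_s\nu^i\beta_s,n_p\beta_p}/k}$ contributes, for each pair $p<s$, the degree $-\tfrac14\cdot\tfrac1k\sum_{i=0}^3\pair{n_s\nu^i\beta_s,n_p\beta_p}=-\tfrac{n_sn_p}{k}\pair{(\beta_s)_{(0)},(\beta_p)_{(0)}}$, again because $\sum_i\nu^i\beta_s=4(\beta_s)_{(0)}$. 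Summing over $p<s$ and regrouping gives exactly $\sum_{i=1}^t\left(\mathrm{en}\ \psi_{n_i\beta_i}^{\hat{\nu}}(m_i)-\sum_{p=1}^{i-1}\pair{n_i(\beta_i)_{(0)},n_p(\beta_p)_{(0)}}/k\right)$.

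The main obstacle is bookkeeping the fractional $z$-exponents correctly: the $E^{\pm}$ factors inside $\mathcal{Z}_{\mathcal{R}^\prime}$ carry $z^{1/4}$-powers, the twisted operators $x_\beta^{\hat{\nu}}(z)$ carry the shift $z^{-m-\pair{\beta,\beta}/2}=z^{-m-1}$ (here $\pair{\beta,\beta}=2$), and the parafermionic normalization multiplies by $z^{-n\beta_{(0)}/k}$ and $\ep_\beta^{-n/k}$; one must verify that when all the $z_i$ are set equal in Lemma \ref{para c2} (and when the prefactor in Lemma \ref{para c3} is expanded and a single mode extracted) the various $\tfrac14$-powers combine into the stated half-integer-shifted energies. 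This is exactly the computation carried out in \cite[Lemma 14]{OT} for the untwisted (or other twisted) cases, and the only genuinely new ingredient is the replacement of $\sum_{i=0}^2$ by $\sum_{i=0}^3$ together with the identity $\sum_{i=0}^3\nu^i\beta=4\beta_{(0)}$; once that is in hand the argument is formally identical, so I would present the proof as ``parallel to \cite[Lemma 14]{OT}'' and simply indicate the two displayed exponent computations above.
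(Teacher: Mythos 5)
Your proposal is correct and follows exactly the route the paper intends: the paper proves this lemma only by reference to \cite[Lemma 14]{OT}, relying on Lemmas \ref{para c2} and \ref{para c3} together with the identity $\sum_{i=0}^{3}\nu^i\beta=4\beta_{(0)}$, which is precisely the computation you carry out (your arithmetic $\tfrac{n}{2k}+\tfrac{n(n-1)}{k}\cdot\tfrac12=\tfrac{n^2}{2k}$ and the pairwise shift $-\pair{n_s(\beta_s)_{(0)},n_p(\beta_p)_{(0)}}/k$ both check out). The only cosmetic slip is calling the energy contribution of the prefactor its ``degree'' with a minus sign already attached — the prefactor has positive total $z$-degree and this shifts the mode so that the energy decreases by that amount — but your final formulas are the correct ones.
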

Using (\ref{ede2}), we find that $[D,\rho(k\alpha)]=0$ for $\alpha\in Q$.
Thus the grading operator $D$ is well-defined on the parafermionic space $L(k\La_0)_{kQ}^{\hat{\heh}[\nu]^+}$.

We define the character of the parafermionic space $L(k\La_0)_{kQ}^{\hat{\heh}[\nu]^+}$ by
\[{\rm ch}\ L(k\La_0)_{kQ}^{\hat{\heh}[\nu]^+}=\sum_{m,r_1,\ldots,r_l\geq0}{\rm dim}(L(k\La_0)_{kQ}^{\hat{\heh}[\nu]^+})_{(m,r_1,\ldots,r_l)}q^my_1^{r_1}\cdots y_l^{r_l},\]
where $(L(k\La_0)_{kQ}^{\hat{\heh}[\nu]^+})_{(m,r_1,\ldots,r_l)}$ is the weight subspace spanned by monomial vectors of conformal energy $m$ and color-type $\mathcal{C}=(r_l,\ldots,r_1)$.

Consider the quasi-particle monomial
\begin{align}\label{QP mono}
x_{n_{r_l^{(1)},l}\alpha_l}^{\hat{\nu}}(m_{r_l^{(1)},l})\cdots x_{n_{1,l}\alpha_l}^{\hat{\nu}}(m_{1,l})\cdots x_{n_{r_1^{(1)},1}\alpha_1}^{\hat{\nu}}(m_{r_1^{(1)},1})\cdots x_{n_{1,1}\alpha_1}^{\hat{\nu}}(m_{1,1})\in B_W\cap M_{QP}^\prime
\end{align}
with charge-type $\mathcal{R}^\prime=(n_{r_l^{(1)},l},\ldots,n_{1,1})$, dual-charge-type $\mathcal{R}=(r_l^{(1)},\ldots,r_1^{(k-1)},0)$.
Note that $p_i^{(k)}$ (or equivalently $r_i^{(k)}$) is equal to zero for all $i=1,\ldots,l$, in (\ref{QP mono}).
To emphasize the dependence of $k$, we replace $\mathcal{P}$ by $\mathcal{P}^{(k-1)}$.
We consider the parafermionic basis vector
\[\psi_{n_{r_l^{(1)},l}\alpha_l,\ldots,n_{1,1}\alpha_1}^{\hat{\nu}}(m_{r_l^{(1)},l},\ldots,m_{1,1})\]
which correspond to the monomial (\ref{QP mono}).
From lemma \ref{para en}, the conformal energy of this current is given by
\begin{align}
\nonumber&-\sum_{i=1}^l\left(\sum_{s=1}^{r_i^{(1)}}m_{s,i}+\sum_{s=1}^{r_i^{(1)}}\frac{n_{s,i}^2\rho_i}{k}+\sum_{s=1}^{r_i^{(1)}}\left(\sum_{t=1}^{s-1}\frac{2n_{s,i}n_{t,i}\rho_i}{k}+\sum_{j=1}^{i-1}\sum_{t=1}^{r_k^{(1)}}\frac{\pair{n_{s,i}(\alpha_i)_{(0)},n_{t,j}(\alpha_j)_{(0)}}}{k}\right)\right)\\
\nonumber&=-\sum_{i=1}^l\sum_{s=1}^{r_i^{(1)}}m_{s,i}-\frac{1}{2}\sum_{i,j=1}^l\sum_{s=1}^{r_i^{(1)}}\sum_{t=1}^{r_j^{(1)}}\frac{\pair{n_{s,i}(\alpha_i)_{(0)},n_{t,j}(\alpha_j)_{(0)}}}{k}\\
&=-\sum_{i=1}^l\sum_{s=1}^{r_i^{(1)}}m_{s,i}-\frac{1}{2}\sum_{i,j=1}^l\sum_{s,t=1}^{k-1}\frac{st}{k}\pair{(\alpha_i)_{(0)},(\alpha_j)_{(0)}}p_i^{(s)}p_j^{(t)},
\end{align}
where we use the fact that
\[\sum_{s=1}^{r_i^{(1)}}n_{s,i}=\sum_{s=1}^{k-1}sp_i^{(s)}.\]
Combining Theorem \ref{character} and the contribution of the conformal shift, we obtain the character of the parafermionic space $L(k\La_0)_{kQ}^{\hat{\heh}[\nu]^+}$.
\begin{theorem}\label{character2}
For affine Lie algebras $A_{2l}^{(2)}$, we obtain
\[{\rm ch}\ L(k\La_0)_{kQ}^{\hat{\heh}[\nu]^+}=\sum_{\mathcal{P}^{(k-1)}}\frac{q^{\frac{1}{2}\sum_{i,j=1}^l\pair{(\alpha_i)_{(0)},(\alpha_j)_{(0)}}\sum_{s,t=1}^{k-1}D_{s,t}^{(k)}p_i^{(s)}p_j^{(t)}}}{\prod_{i=1}^l\prod_{s=1}^{k-1}(q^\frac{1}{2})_{p_i^{(s)}}}\prod_{i=1}^ly_i^{\sum_{s=1}^{k-1}sp_i^{(s)}}\]
where the sum runs over all sequences $\mathcal{P}^{(k-1}$ of $l(k-1)$ nonnegative integers and
\[D_{s,t}^{(k)}={\rm min}\{s,t\}-\frac{st}{k}.\]
\end{theorem}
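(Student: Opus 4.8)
The plan is to read the character directly off the parafermionic basis of Theorem~\ref{basis4}, keeping in mind that the $q$-grading is now by conformal energy with respect to $D$, and to reduce the energy bookkeeping to the computation already carried out for the principal subspace in Theorem~\ref{character}.

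First I would fix the parametrization. By Theorem~\ref{basis4} the space $L(k\La_0)_{kQ}^{\hat{\heh}[\nu]^+}$ has a basis given by the vectors $\psi_{\mathcal{R}^\prime}(m_{r_l^{(1)},l},\ldots,m_{1,1})v_0$ attached to the quasi-particle monomials $b\in B_W\cap M_{QP}^\prime$. Since $M_{QP}^\prime$ contains no $x_{k\alpha_i}^{\hat{\nu}}(m)$, every such $b$ has $r_i^{(k)}=0$, equivalently $p_i^{(k)}=0$, for all $i=1,\ldots,l$; hence these basis vectors are indexed by the sequences $\mathcal{P}^{(k-1)}=(p_i^{(s)})_{1\le i\le l,\,1\le s\le k-1}$ of $l(k-1)$ nonnegative integers, together with, for each fixed $\mathcal{P}^{(k-1)}$, the energy-types $(m_{r_l^{(1)},l},\ldots,m_{1,1})$ admissible in the sense of $(C1)$--$(C3)$. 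For such a $b$ the color-type is $\mathcal{C}=(r_l,\ldots,r_1)$ with $r_i=\sum_{s=1}^{k-1}sp_i^{(s)}$ by (\ref{color}), which produces the factor $\prod_{i=1}^ly_i^{\sum_{s=1}^{k-1}sp_i^{(s)}}$ in the corresponding term.

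Next I would handle the $q$-contribution. From $[L^{\hat{\nu}}(0),x_\alpha^{\hat{\nu}}(m)]=-mx_\alpha^{\hat{\nu}}(m)$ the $q$-weight of $bv_0$ inside ${\rm ch}\ W(k\La_0)$ is $-{\rm en}\,b=-\sum_{i}\sum_{p}m_{p,i}$; so Theorem~\ref{character}, together with the two displayed identities re-expressing $\sum_p\rho_i(2p-1)n_{p,i}$ and $\frac{1}{2}\sum_{p,q}{\rm min}\{n_{p,i},n_{q,i-1}\}$ through $\mathcal{P}$ and the partition identity $1/(q^\frac{1}{2})_n=\sum_jp_n(j)q^\frac{j}{2}$, gives, for each fixed $\mathcal{P}^{(k-1)}$,
\[
\sum_{\text{energy-types}}q^{-{\rm en}\,b}=\frac{q^{\frac{1}{2}\sum_{i,j=1}^l\pair{(\alpha_i)_{(0)},(\alpha_j)_{(0)}}\sum_{s,t=1}^{k-1}{\rm min}\{s,t\}p_i^{(s)}p_j^{(t)}}}{\prod_{i=1}^l\prod_{s=1}^{k-1}(q^\frac{1}{2})_{p_i^{(s)}}},
\]
the $s=k$ factors of Theorem~\ref{character} having disappeared because $p_i^{(k)}=0$ and $(q^\frac{1}{2})_0=1$. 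On the other hand, by Lemma~\ref{para en} and the computation preceding the statement, the conformal energy (the $D$-eigenvalue) of $\psi_{\mathcal{R}^\prime}(m_{r_l^{(1)},l},\ldots,m_{1,1})v_0$ equals $-{\rm en}\,b$ minus the $\mathcal{P}^{(k-1)}$-dependent constant $\frac{1}{2}\sum_{i,j}\pair{(\alpha_i)_{(0)},(\alpha_j)_{(0)}}\sum_{s,t}\frac{st}{k}p_i^{(s)}p_j^{(t)}$. Multiplying the displayed sum by $q^{-\frac{1}{2}\sum_{i,j}\pair{(\alpha_i)_{(0)},(\alpha_j)_{(0)}}\sum_{s,t}\frac{st}{k}p_i^{(s)}p_j^{(t)}}$ therefore turns ${\rm min}\{s,t\}$ into ${\rm min}\{s,t\}-\frac{st}{k}=D_{s,t}^{(k)}$ in the exponent; summing over all $\mathcal{P}^{(k-1)}$ and inserting the color factor yields the asserted formula.

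The routine parts — rewriting $(C2)$--$(C3)$ in terms of $\mathcal{P}^{(k-1)}$ and the partition counting — are literally those used for the principal subspace, and the only point deserving attention is that the conformal shift is constant on each fixed-$\mathcal{P}^{(k-1)}$ stratum (and that $D$ is well defined on the $kQ$-coinvariants), which is what makes the reduction go through. Both facts are built in: $D^{\hat{\heh}[\nu]^+}|_{L(k\La_0)_\mu^{\hat{\heh}[\nu]^+}}=\pair{\mu_{(0)},\mu_{(0)}}/2k$ depends only on the $\heh_{(0)}$-weight $\mu$, which for our vector is $\mu_{(0)}=\sum_i r_i(\alpha_i)_{(0)}$ with $r_i=\sum_s sp_i^{(s)}$, so that $\pair{\mu_{(0)},\mu_{(0)}}=\sum_{i,j}\sum_{s,t}st\pair{(\alpha_i)_{(0)},(\alpha_j)_{(0)}}p_i^{(s)}p_j^{(t)}$; and $[D,\rho(k\alpha)]=0$ for $\alpha\in Q$, as noted just above, so $D$ descends to $L(k\La_0)_{kQ}^{\hat{\heh}[\nu]^+}$. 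Hence no genuinely new difficulty appears beyond this bookkeeping.
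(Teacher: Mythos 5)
Your argument is correct and is essentially the paper's own: the paper likewise reads the character off the parafermionic basis of Theorem \ref{basis4} (so $p_i^{(k)}=0$), reuses the fixed-charge energy-type summation from Theorem \ref{character}, and applies the conformal-energy shift $-\frac{1}{2}\sum_{i,j}\pair{(\alpha_i)_{(0)},(\alpha_j)_{(0)}}\sum_{s,t}\frac{st}{k}p_i^{(s)}p_j^{(t)}$ computed via Lemma \ref{para en} to convert ${\rm min}\{s,t\}$ into $D_{s,t}^{(k)}$. Your additional checks (constancy of the shift on each fixed-$\mathcal{P}^{(k-1)}$ stratum and $[D,\rho(k\alpha)]=0$) coincide with the remarks the paper makes immediately before the theorem.
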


\subsection{Standard module}
Finally, we calculate the character of the standard module $L(k\La_0)$.
The character of the standard module is defined in  the same way as that of principal subspace.
We see that the character formula of the standard module is given as follows.
\begin{theorem}\label{ST ch}
For affine Lie algebras $A_{2l}^{(2)}$, we have
\begin{align*}
{\rm ch}\ L(k\La_0)=\frac{1}{\prod_{i=1}^l(q^\frac{1}{2})_\infty}\sum_{\eta\in Q_{(0)}}q^{\pair{\eta,\eta}/2k}\prod_{i=1}^ly_i^{\eta_i}\sum_{\calP^{(k-1)}}\frac{q^{\frac{1}{2}\sum_{i,j=1}^l\pair{(\alpha_i)_{(0)},(\alpha_j)_{(0)}}\sum_{s,t=1}^{k-1}D_{s,t}^{(k)}p_i^{(s)}p_j^{(t)}}}{\prod_{i=1}^l\prod_{s=1}^{k-1}(q^\frac{1}{2})_{p_i^{(s)}}}
\end{align*}
where $\eta_i\in\Z(\alpha_i)_{(0)}$ and the sum $\sum_{\calP^{(k-1)}}$ runs over all sequences $\calP^{(k-1)}$ of $l(k-1)$ nonnegative integers satisfying
\[\sum_{i=1}^l\sum_{s=1}^{k-1}sp_i^{(s)}(\alpha_i)_{(0)}\in\eta+kQ_{(0)}.\]
\end{theorem}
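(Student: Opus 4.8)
The plan is to combine the Lepowsky--Wilson factorization (\ref{LWthm}) with Theorem \ref{character2} and a careful bookkeeping of the $kQ$-action on the vacuum space. First I would invoke the isomorphism of $d$-graded spaces $U(\hat{\heh}[\nu]^-)\ot L(k\La_0)^{\hat{\heh}[\nu]^+}\simeq L(k\La_0)$ from (\ref{LWthm}): since $\heh_{(1)}=\heh_{(3)}=\{0\}$ and $\dim\heh_{(0)}=l$, the Fock space $U(\hat{\heh}[\nu]^-)\simeq{\rm Sym}(\hat{\heh}[\nu]^-)$ contributes exactly the factor $\prod_{i=1}^l 1/(q^{1/2})_\infty$ to the character, with no $y_i$-weight. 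So it remains to compute ${\rm ch}\,L(k\La_0)^{\hat{\heh}[\nu]^+}$ graded by $d$ (equivalently by the conformal energy coming from $L^{\hat\nu}(0)$) and by the $\heh_{(0)}$-weight recorded in $y_1,\dots,y_l$.

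Next I would analyze the vacuum space via its basis from Theorem \ref{basis3}: vectors $e_\mu\,\mathcal{Z}_{\calR'}(m_{r_l^{(1)},l},\ldots,m_{1,1})v_0$ with $\mu\in Q$ and $b\in B_W\cap M_{QP}^\prime$. The $\heh_{(0)}$-weight of such a vector is $k\mu_{(0)}$ plus the weight $\sum_{i}r_i(\alpha_i)_{(0)}$ carried by the quasi-particle part, and using (\ref{ede2}) the $d$-grading of $e_\mu$ (applied to a vacuum-space vector) shifts the conformal energy. The key step is to reorganize the double sum over $(\mu,b)$: fix the total $\heh_{(0)}$-weight and sum. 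Writing $\eta=\sum_{i=1}^l\sum_{s=1}^{k-1}sp_i^{(s)}(\alpha_i)_{(0)}+k\mu_{(0)}$, the quasi-particle color data $\calP^{(k-1)}$ is constrained to lie in a fixed coset of $kQ_{(0)}$ determined by $\eta$, namely $\sum_{i,s}sp_i^{(s)}(\alpha_i)_{(0)}\in\eta+kQ_{(0)}$, which is precisely the constraint in the statement. The contribution of the $e_\mu$ translation to the energy is $\pair{\mu_{(0)},\mu_{(0)}}/2$ from (\ref{ede2}); after completing the square relative to the parafermionic conformal shift $D^{\hat{\heh}[\nu]^+}$, this assembles into the term $q^{\pair{\eta,\eta}/2k}$ with $\eta$ ranging over $Q_{(0)}$ (each coset of $kQ_{(0)}$ in $Q_{(0)}$ being hit as $\mu_{(0)}$ varies), while the remaining energy of the parafermionic basis vector is exactly what Theorem \ref{character2} computes, giving the factor $\sum_{\calP^{(k-1)}}q^{\frac12\sum\pair{(\alpha_i)_{(0)},(\alpha_j)_{(0)}}\sum D_{s,t}^{(k)}p_i^{(s)}p_j^{(t)}}/\prod(q^{1/2})_{p_i^{(s)}}$.

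Concretely, I would argue: by Theorem \ref{basis3} the vacuum space decomposes as $\bigoplus_{\mu\in Q}e_\mu\big(L(k\La_0)^{\hat{\heh}[\nu]^+}_{\mathrm{para}}\big)$ where the inner space is spanned by the parafermionic-type vectors $\mathcal{Z}_{\calR'}(\cdots)v_0$; grading the latter by $D$ and by color-type gives ${\rm ch}$ equal to Theorem \ref{character2}'s formula with $y_i^{r_i}=\prod_i y_i^{\sum_s sp_i^{(s)}}$. Then ${\rm ch}\,L(k\La_0)^{\hat{\heh}[\nu]^+}=\sum_{\mu\in Q}q^{\mathrm{en}(e_\mu)}\prod_i y_i^{k(\mu)_i}\cdot(\text{that formula})$, and recombining the $e_\mu$-energy $\pair{\mu_{(0)},\mu_{(0)}}/2$ with the conformal shift $-D^{\hat{\heh}[\nu]^+}=\pair{\mu_{(0)}}{\cdots}$ converts $d$-grading into $D$-grading compatibly; finally tensoring with the Heisenberg Fock space via (\ref{LWthm}) multiplies by $\prod_{i=1}^l 1/(q^{1/2})_\infty$. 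Collecting the weight contributions of $e_\mu$ and of the quasi-particles into a single lattice variable $\eta\in Q_{(0)}$ with the stated divisibility constraint yields the claimed formula.

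The main obstacle I anticipate is the precise matching of the energy bookkeeping: one must verify that the $d$-grading shift induced by the translation operators $e_\mu$ in (\ref{ede2}), namely $d\mapsto d+\mu-\tfrac12\pair{\mu_{(0)},\mu_{(0)}}c$ acting at level $k$, combines with the parafermionic conformal correction $D^{\hat{\heh}[\nu]^+}=\pair{\mu_{(0)},\mu_{(0)}}/2k$ and with the quadratic energy term from Theorem \ref{character2} to produce exactly $q^{\pair{\eta,\eta}/2k}$ times the advertised parafermionic sum, with the coset condition $\sum_{i,s}sp_i^{(s)}(\alpha_i)_{(0)}\in\eta+kQ_{(0)}$. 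This is a completion-of-the-square computation in the lattice $Q_{(0)}$ together with a change of summation variable $\mu_{(0)}\mapsto\eta$; it is routine in principle but must be done with care to get the factor of $k$ and the sign conventions right, exactly as in the untwisted computation of \cite{BKP} and the twisted computations of \cite{OT}.
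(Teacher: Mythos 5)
Your proposal is correct and follows essentially the same route as the paper: factor off the Heisenberg Fock space via the Lepowsky--Wilson theorem (\ref{LWthm}) to get $\prod_{i=1}^l 1/(q^{1/2})_\infty$, then compute the vacuum-space character from the basis of Theorem \ref{basis3} by splitting each basis vector into its $e_\mu$ and parafermionic parts, completing the square in $Q_{(0)}$ to produce $q^{\pair{\eta,\eta}/2k}$ and the coset constraint $\sum_{i,s}sp_i^{(s)}(\alpha_i)_{(0)}\in\eta+kQ_{(0)}$, and invoking Theorem \ref{character2} for the remaining factor. The paper merely cites the analogous computation in \cite[Theorem 17]{OT}; your write-up supplies exactly those details.
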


In fact, using Lepowsky-Wilson theorem (\ref{LWthm}), we have the following relation for the character formula
\[{\rm ch}\ L(k\La_0)=\frac{1}{\prod_{i=1}^l(q^\frac{1}{2})_\infty}{\rm ch}\ L(k\La_0)^{\hat{\heh}[\nu]^+}.\]
Since the basis of the vacuum space $L(k\La_0)^{\hat{\heh}[\nu]^+}$ is given in Theorem \ref{basis3}, we are able to calculate ${\rm ch}\ L(k\La_0)^{\hat{\heh}[\nu]^+}$ in the same way as \cite[Theorem 17]{OT}.

\section*{Acknowledgments}
The author would like to thank Masato Okado for helpful comments and discussion on this research.
This work is supported by JST, the establishment of university fellowships towards the creation of science technology innovation, Grant Number JPMJFS2138.

\end{document}